\crefname{hypothesis}{Hypothesis}{Hypotheses}
\Crefname{ALC@unique}{Line}{Lines}
\colorlet{texcscolor}{blue!50!black}
\colorlet{texemcolor}{red!70!black}
\colorlet{texpreamble}{red!70!black}
\colorlet{codebackground}{black!25!white!25}
\lstdefinestyle{siamlatex}{%
  style=tcblatex,
  texcsstyle=*\color{texcscolor},
  texcsstyle=[2]\color{texemcolor},
  keywordstyle=[2]\color{texemcolor},
  moretexcs={cref,Cref,maketitle,mathcal,text,headers,email,url},
}
\DeclareTotalTCBox{\code}{ v O{} }
{ %fontupper=\ttfamily\color{texemcolor},
  fontupper=\ttfamily\color{black},
  nobeforeafter,
  tcbox raise base,
  colback=codebackground,colframe=white,
  top=0pt,bottom=0pt,left=0mm,right=0mm,
  leftrule=0pt,rightrule=0pt,toprule=0mm,bottomrule=0mm,
  boxsep=0.5mm,
  #2}{#1}
\patchcmd\newpage{\vfil}{}{}{}
\title{ two-person zero-sum Stochastic linear quadratic control problems with Markov chains and fractional Brownian motion  in infinite horizon}
\author{Chang Liu, Hongtao Fan,
 Yajing Li\thanks{College of Science, Northwest A\&F University, Yangling, Shaanxi 712100, People’s Republic of China}}
\title{ two-person zero-sum Stochastic linear quadratic control problems with Markov chains and fractional Brownian motion  in infinite horizon}
\author{Chang Liu, Hongtao Fan,
 Yajing Li\thanks{College of Science, Northwest A\&F University, Yangling, Shaanxi 712100, People’s Republic of China}}
\begin{document}
\maketitle
\bibliographystyle{siamplain}
%% ------------------------------------------------------------------
%% ABSTRACT
%%
%\bibliography{references} 

\begin{tcbverbatimwrite}{tmp_\jobname_abstract.tex}
\begin{abstract}
 This paper addresses a class of two-person zero-sum stochastic differential equations, which encompass Markov chains and fractional Brownian motion, and satisfy some monotonicity conditions over an infinite time horizon. Within the framework of forward-backward stochastic differential equations (FBSDEs) that describe system evolution, we extend the classical It$\rm\hat{o}$'s formula to accommodate complex scenarios involving Brownian motion, fractional Brownian motion, and Markov chains simultaneously. By applying the Banach fixed-point theorem and approximation methods respectively, we theoretically guarantee the existence and uniqueness of solutions for FBSDEs in infinite horizon. Furthermore, we apply the method for the first time to the optimal control problem in a two-player zero-sum game, deriving the optimal control strategies for both players by solving the FBSDEs system. Finally, we conduct an analysis of the impact of the cross-term 
 $S(\cdot)$  in the cost function on the solution, revealing its crucial role in the optimization process. 
\end{abstract}

\begin{keywords}
 Markov chains, fractional Brownian motion, infinite time horizon, two-person zero-sum game, stochastic optimal control.
\end{keywords}

%\begin{MSCcodes}
%00A20, 00B10
%\end{MSCcodes}
\end{tcbverbatimwrite}
\begin{abstract}
 This paper addresses a class of two-person zero-sum stochastic differential equations, which encompass Markov chains and fractional Brownian motion, and satisfy some monotonicity conditions over an infinite time horizon. Within the framework of forward-backward stochastic differential equations (FBSDEs) that describe system evolution, we extend the classical It$\rm\hat{o}$'s formula to accommodate complex scenarios involving Brownian motion, fractional Brownian motion, and Markov chains simultaneously. By applying the Banach fixed-point theorem and approximation methods respectively, we theoretically guarantee the existence and uniqueness of solutions for FBSDEs in infinite horizon. Furthermore, we apply the method for the first time to the optimal control problem in a two-player zero-sum game, deriving the optimal control strategies for both players by solving the FBSDEs system. Finally, we conduct an analysis of the impact of the cross-term
 $S(\cdot)$  in the cost function on the solution, revealing its crucial role in the optimization process.
\end{abstract}

\begin{keywords}
 Markov chains, fractional Brownian motion, infinite time horizon, two-person zero-sum game, stochastic optimal control.
\end{keywords}

%\begin{MSCcodes}
%00A20, 00B10
%\end{MSCcodes}

%% ------------------------------------------------------------------

%% END HEADER
%% ------------------------------------------------------------------

\section{Introduction}
Optimal control theory is a mathematical framework used to determine how to optimize the performance of a system by controlling its variables subject to given constraints. It has broad applications across various fields, including engineering, economics, and physics. The linear-quadratic (LQ) optimal control problem \cite{yong2012stochastic}, which features a quadratic objective function and linear constraints, is particularly useful for modeling systems where the goal is to minimize key performance measures such as energy consumption, time costs, or system errors.

A significant body of work has focused on finite-time stochastic control problems involving factors such as mean-field interactions, time delay, partial information, open/closed-loop systems, and Markov chains. Research has explored both open-loop and closed-loop solvability of the cost function using the Riccati equation \cite{sun2016open}.   The optimal control problem for systems with Markov state switching in finite time has been investigated in \cite{wen2023stochastic}. A class of mean-field LQ problems under monotonicity conditions is addressed in \cite{tian2023mean}. In \cite{li2020linear}, optimal control problems with time delay and full and partial information are considered. Research on infinite-time stochastic optimal control has also emerged, including investigations on forward-backward stochastic differential equations (FBSDEs) with jump diffusion \cite{yu2017infinite}, and the exploration of open-loop solutions for FBSDEs with random coefficients \cite{wei2021infinite}. However, while methods for solving relatively simpler stochastic differential equations in infinite time have been developed, there remains a notable gap in the study of more complex models, especially those incorporating fractional Brownian motion (fBm) and higher-order interactions.

Most existing methods in this field focus on stochastic differential equations (SDEs) driven by Brownian motion, which is commonly employed to model noise or disturbances in systems. However, in many practical scenarios, physical phenomena cannot be adequately explained by Brownian motion alone.  Since fractional Brownian motion is neither a Markov process nor a semimartingale, it can provide a better explanation for certain complex phenomena. For instance, in \cite{han2019stochastic}, the stochastic LQ problem driven by fBm is analyzed. Similarly, the optimal control problem for linear stochastic PDEs driven by multiplicative fBm is investigated in \cite{grecksch2022optimal}. A class of SDEs in Hilbert space driven by fBm is explored in \cite{duncan2012linear}. Nevertheless, the research in these researches is constrained to finite time horizons.

The study of multi-player control problems in the field of stochastic control is a highly complex and challenging area of research. These problems typically involve multiple decision-makers, such as controllers or participants who must navigate an uncertain environment to optimize their own objectives. For instance, the work in \cite{sun2021two} explores the open-loop saddle point problem for a two-person zero-sum stochastic optimal control equation, while the study in \cite{li2023linear} examines the issue of Nash equilibria in N-player games. Additionally, the research in \cite{hambly2023policy} investigates the Nash equilibrium problem in N-player general-sum games using the policy gradient method. However, a significant limitation in much of the existing literature is its focus on deterministic coefficients, which do not capture the inherent uncertainties present in many real-world systems. In contrast, most real-world environments are affected by random disturbances, noise, and unpredictable changes \cite{tang2003general}. By incorporating stochastic coefficients into these models, we can more accurately reflect the complexities and uncertainties of actual systems, ultimately enhancing the model's applicability and relevance to real-world scenarios \cite{li2018indefinite}.

To fill a gap in current research, this paper focuses on exploring a class of two-person zero-sum stochastic control problems with random coefficients. This problem integrates the characteristics of Brownian motion, fractional Brownian motion, and Markov chains, and conducts an in-depth analysis within an infinite time horizon, making this study quite compelling. Compared to \cite{wei2021infinite}, this paper does not rely on stringent assumptions but instead constructs a broader and more practical theoretical framework. Within this framework, we only need to satisfy basic properties within the field of stochastic control, including Lipschitz continuity, monotonicity, and boundedness, to apply it to a wider range of practical situations. However, due to the system we consider simultaneously incorporating Brownian motion, fBm, and Markov chains, there currently lacks a directly applicable It$\rm\hat{o}$'s formula. Moreover, the non-martingale and non-Markovian properties of fBm, combined with the randomness and dependency structure of Markov chains, collectively increase the difficulty in proving the existence and uniqueness of solutions to FBSDEs, particularly within an infinite time horizon where this challenge is further amplified. Additionally, the two-person zero-sum game problem discussed in this paper features random coefficients and an infinite time horizon, which further complicates the task of finding two optimal control functions. Compared to \cite{sun2021two,sun2014linear}, our work is more complex, requiring the handling of more randomness and the challenges posed by the infinite time horizon. To our knowledge, this is the first study to address this type of problem, thus possessing groundbreaking and exploratory significance.
This paper tackles these challenges, with the following contributions and innovations:

\begin{itemize}
	\item[$\bullet$] This paper introduces a class of two-person zero-sum stochastic control problems, defined over an infinite time horizon, that combines Markov chains with fractional Brownian motion, subject to monotonicity conditions. These new elements  significantly expands the scope of traditional stochastic control models, capturing more complex system dynamics that were previously unexplored in the literature. By integrating these  components, our approach not only introduces a new theoretical framework but also provides a powerful methodological tool to address challenges in modeling dynamic strategic interactions under uncertainty. Furthermore, the paper derives an innovative It$\rm\hat{o}$'s formula tailored specifically for the proposed system, filling a critical gap in the existing literature where such a formulation has yet to be developed.
	
	\item[$\bullet$] A key innovation of this work is the use of the Banach fixed-point theorem and approximation methods within an infinite time framwork to establish the existence and uniqueness of solutions for FBSDEs, as well as for the coupled equations. We rigorously prove that, under appropriate conditions, the solutions to these highly complex stochastic equations are not only well-defined but also uniquely determined. This result provides a crucial step for further theoretical  analysis and the development of optimal control strategies.
	
	\item[$\bullet$] This paper introduces a optimal control representation for the two-person zero-sum problem by employing saddle point theory in a novel context. In stochastic systems, it is essential to capture the intricate interplay between the participants’ strategies and the inherent randomness, such as that induced by Markov chains and fractional Brownian motion. This innovative approach offers a comprehensive and systematic framework for characterizing the optimal strategies of both participants in such stochastic environments, marking a significant advancement in the analysis of dynamic strategic interactions under uncertainty.
	
	\item[$\bullet$] we delve into the impact of the cross term $ S(\cdot) $ on the system's behavior, revealing a critical innovation in understanding the coupling dynamics. When $ S(\cdot) $ is non-zero, it introduces a complex interdependence between the state variables and the control actions of both players, significantly complicating the optimal control strategies. In contrast, when $ S(\cdot) $ is zero, the system simplifies, leading to a more tractable characterization of the optimal strategies. This distinction provides profound insights into how varying degrees of coupling influence the design of optimal control solutions in stochastic environments, offering a powerful framework for tackling more complex, real-world scenarios.
\end{itemize}

The remainder of the paper is organized as follows. Section 2 presents the necessary background and notation. Section 3 establishes the existence and uniqueness of solutions for forward stochastic differential equations. Section 4 addresses the existence and uniqueness of solutions for backward stochastic differential equations. Section 5 proves the existence and uniqueness of solutions for the coupled equations. Section 6 provides a representation of the optimal control problem and a discussion on the coefficient of the cross term. Finally, Section 7 concludes the paper.

\section{Preliminaries and notations}

Let $\mathbb{R}^{n}$ be the $ n $-dimensional Euclidean space equipped with the Euclidean inner product $\langle \cdot , \cdot \rangle $. The induced norm is denoted by $| \cdot | $. Let $\mathbb{R}^{m\times n}$ be the set of all $m \times n$ matrices and $ \mathbb{S}^{n} $ be the set of all $ n \times n $ symmetrical matrices. $A^{T}$ denotes the transpose of $A$.

Let $(\Omega, \mathcal{F}, \mathbb{F}, \mathbb{P})$ be a complete filtered probability space on which is defined a $d$-dimensional Brownian motion $\left \{W(t), t \geq 0 \right \} $. Let $\left \{B^{H}(t), t \geq 0 \right \}$ be a fractional Brownian motion in $(\Omega, \mathcal{F}, \mathbb{F}, \mathbb{P})$, where the Hurst index  $H \in\left(\frac{1}{2}, 1\right) $, and each dimension is independent of each other. $\alpha(t)$  is a right continuous Markov chain on  $(\Omega, \mathcal{F}, \mathbb{F}, \mathbb{P})$, setting in  $\mathbb{S}=\{1,2, \cdots, m\}$. For each pair $(i_{0},j_{0})\in\mathbb{S} \times \mathbb{S}, i_{0}\ne j_{0}, 0\le t <+\infty $, we define $\left[M_{i_{0} j_{0}}\right](t)=\sum\limits_{0 \leq s \leq t} \chi\left(\alpha(s-)=i_{0}\right) \chi\left(\alpha(s)=j_{0}\right)$, $\left\langle M_{i_{0} j_{0}}\right\rangle(t)=\int_{0}^{t} q_{i_{0} j_{0}} \chi\left(\alpha(s-)=i_{0}\right) d s$ and $M_{i_{0} j_{0}} =\left[M_{i_{0} j_{0}}\right](t)-\left\langle M_{i_{0} j_{0}}\right\rangle(t)$, where $\chi\left(\alpha(s-)=i_{0}\right)$ means that $\alpha(s-)=1$ for $i_{0}$ and 0 for other cases. Further, we assume that $\alpha(t), W(t)$ and $B^{H}(t)$ are independent of each other.

Let $t \in [0,+\infty)$ and $K\in \mathbb{R}$, we introduce some spaces:
\begin{itemize}
	\item[$\bullet$]$L_{\mathcal{F}_{t}}^{2}\left(\Omega ; \mathbb{R}^{n}\right)$  is the set of  $\mathbb{R}^{n} $-valued  $\mathcal{F}_{t}$ -measurable random variables  $\xi$  such that
	$\|\xi\|_{L_{\mathcal{F}_{t}}^{2}\left(\Omega ; \mathbb{R}^{n}\right)}:=\left\{E\left[|\xi|^{2}\right]\right\}^{1 / 2}<+\infty $.
	\item[$\bullet$]$L_{\mathcal{F}_{t}}^{+\infty}\left(\Omega ; \mathbb{R}^{m \times n}\right)$  is the set of  $\mathbb{R}^{m \times n} $-valued  $\mathcal{F}_{t} $-measurable random variables  $ A $  such that
	$ \|A\|_{L_{\mathcal{F}_{t}}^{+\infty}\left(\Omega ; \mathbb{R}^{m \times n}\right)}:=\underset{\omega \in \Omega}{\operatorname{esssup}}\|A(\omega)\|<+\infty  $.
	\item[$\bullet$]$L_{\mathbb{F}}^{2, K}\left(0, +\infty ; \mathbb{R}^{n}\right)$  is the set of  $\mathbb{R}^{n} $-valued  $\mathbb{F} $-progressively measurable processes  $f(\cdot)$  such that
	$ \|f(\cdot)\|_{L_{\mathbb{F}}^{2, K}\left(0, +\infty ; \mathbb{R}^{n}\right)}:=\left\{E \int_{0}^{+\infty}\left|f(s) e^{K s}\right|^{2} d s\right\}^{1 / 2}<+\infty $.
	\item[$\bullet$]$L_{\mathbb{F}}^{+\infty}\left(0, +\infty ; \mathbb{R}^{m \times n}\right)$  is the set of  $\mathbb{R}^{m \times n} $-valued  $\mathbb{F} $-progressively measurable processes  $A(\cdot)$  such that
	$ \|A(\cdot)\|_{L_{\mathbb{F}}^{+\infty}\left(0, +\infty ; \mathbb{R}^{m \times n}\right)}:=\underset{(s, \omega) \in[0, +\infty) \times \Omega}{\operatorname{esssup}}\|A(s, \omega)\|<+\infty  $.
	\item[$\bullet$]$L^{2,H}(0,+\infty;\mathbb{R}^{n})$ is the set of  $\mathbb{R}^{n} $-valued  $f(\cdot)$ variable such that $\|f\|_{L^{2,H}(0,+\infty;\mathbb{R}^{n})}\\:=\int_{0}^{+\infty} \int_{0}^{+\infty} f(u) f(s) \varphi_{H}(u, s) d s d u<+\infty$, where $\varphi_{H}(u, s)=H(2 H-1)|u-s|^{2 H-2}$, $0\le s \le u<+\infty$.
\end{itemize}

In this paper, we study the optimal control problem for stochastic linear quadratic programming, integrating the dynamics of standard Brownian motion, fractional Brownian motion, and Markov chains over an infinite time.
\begin{eqnarray}
	\left\{\begin{aligned}
	&d x(s)=\big[A(s,\alpha(s)) x(s)+B_{1}(s,\alpha(s)) u_{1}(s)+B_{2}(s,\alpha(s)) u_{2}(s)\big] d s\\&\quad \quad \quad+H(s)dB^{H}(s)+\sum_{i=1}^{d}\big[C_{i}(s,\alpha(s)) x(s)+D_{1i}(s,\alpha(s)) u_{1}(s)\\&\quad \quad \quad+D_{2i}(s,\alpha(s)) u_{2}(s)\big] d W_{i}(s),\quad s \in[t_{0}, +\infty), \\
	&x(t_{0})=x_{t_{0}},\quad \alpha(t_{0})=i.
	\end{aligned}\right.\label{xx}
\end{eqnarray}
where $t_{0}\in [0,+\infty)$ is the initial time, $x_{t_{0}}$ is the initial state. $A(\cdot, \cdot), B_{1}(\cdot, \cdot),B_{2}(\cdot, \cdot), C_{i}(\cdot, \cdot)$ and $D_{1i}(\cdot, \cdot)(i=1,2,\cdots, d),D_{2i}(\cdot, \cdot)(i=1,2,\cdots, d)$ are bounded matrix-valued stochastic processes, $H(\cdot)$ is bounded matrix-valued function. $W=(W_{1}(\cdot),W_{2}(\cdot),\cdots,W_{d}(\cdot))^{T}$ is a $d$-dimensional Brownian motion, $B^{H}(\cdot)$ is a fractional Brownian motion, $x(\cdot)$ valued in $\mathbb{R}^{n}$ is the state process, $u(\cdot)$ valued in $\mathbb{R}^{m}$ is the control process.
The corresponding cost function is shown below
\begin{eqnarray}
	\quad \label{jj} \\
	 J^{K}\left(t_{0}, x_{t_{0}} ; u_{1}(\cdot),u_{2}(\cdot)\right)=\frac{1}{2} E \int_{t_{0}}^{+\infty} e^{2 K s}\left\langle \Pi(s,\alpha(s))\left(\begin{array}{l}
	x(s) \\
	u_{1}(s) \\ 
	u_{2}(s)
	\end{array}\right),\left(\begin{array}{l}
	x(s) \\
	u_{1}(s) \\
	u_{2}(s)
	\end{array}\right)\right\rangle d s.\nonumber
\end{eqnarray}
where $\Pi(s,\alpha(s))=\left(\begin{array}{lll}
Q(s,\alpha(s)) & S_{1}(s,\alpha(s))^{T} & S_{2}(s,\alpha(s))^{T} \\
S_{1}(s,\alpha(s)) & R_{11}(s,\alpha(s)) & R_{12}(s,\alpha(s)) \\
S_{2}(s,\alpha(s)) & R_{21}(s,\alpha(s)) & R_{22}(s,\alpha(s))
\end{array}\right)$, $K\in \mathbb{R}$ is a constant, $Q(\cdot, \cdot), S_{1}(\cdot, \cdot), S_{1}(\cdot, \cdot), R_{11}(\cdot, \cdot),R_{12}(\cdot, \cdot),R_{11}(\cdot, \cdot),R_{21}(\cdot, \cdot),R_{22}(\cdot, \cdot)$ are bounded matrix-valued stochastic processes.

 The function (\ref{jj}) can be interpreted as the loss for Player 1 and the corresponding gain for Player 2. In the context of this two-person zero-sum stochastic optimal control problem, Player 1 seeks to choose a control that minimizes their own loss, while Player 2 aims to maximize their gain. The optimal outcome for both players occurs when the control pair is such that neither player can improve their outcome by unilaterally changing their strategy, assuming the other player's control remains fixed. This equilibrium point, where both players' strategies are mutually optimal, is referred to as  saddle point, and it is mathematically defined by the following inequalities:

$$J^{K}\left( u_{1}^{*},u_{2}\right)\le J^{K}\left( u_{1}^{*},u_{2}^{*}\right)\le J^{K}\left( u_{1},u_{2}^{*}\right), \quad \forall \, u_{1},u_{2},$$
where $u_{1}^{*},u_{2}^{*}$ represent the optimal control.

Given that this paper focuses on stochastic equations driven by fractional Brownian motion over an infinite time horizon, it is crucial to first demonstrate the existence and uniqueness of the solution to equation (\ref{xx}) before proceeding with the derivation of the optimal control.
\section{ Infinite horizon SDE}
Consider the following infinite horizon SDE:
\begin{eqnarray}
    \left\{\begin{aligned}
    &d x(s)=b(s, x(s), \alpha(s) d s+\sum_{i=1}^{d} \sigma_{i}(s, x(s), \alpha(s)) d W_{i}(s)+\gamma(s) dB^{H}(s), \\
    &x(t_{0})=x_{t_{0}} \quad \alpha(t_{0})=i,
    \end{aligned}\right.\label{Xequ}
\end{eqnarray}
where $s \in[t_{0}, +\infty)$, $b$ and $\sigma_{i} (i=1,2,\cdots,d)$ are mappings from $[t_{0}, +\infty)\times \Omega \times \mathbb{R}^{n}$ to $\mathbb{R}^{n}$, $\gamma$ is mapping from $[t_{0}, +\infty) \times \mathbb{R}^{n}$ to $\mathbb{R}^{n}$. For convenience, we denote $\sigma=(\sigma^{T}_{1},\sigma^{T}_{2},\cdots,\sigma^{T}_{d})^{T}$.

\textbf{Assumption A}. (i)  $x_{t_{0}} \in L_{\mathcal{F}_{t}}^{2}\left(\Omega ; \mathbb{R}^{n}\right) $. For any  $x \in \mathbb{R}^{n} $, the processes  $b(\cdot, x,\cdot)$  and  $\sigma(\cdot, x,\cdot)$  are  $\mathbb{F}$-progressively measurable. Moreover, there exists a constant  $K \in \mathbb{R}$  such that  $b(\cdot, 0,\cdot) \in L_{\mathbb{F}}^{2, K}\left(0, +\infty ; \mathbb{R}^{n}\right)$  and  $\sigma(\cdot, 0,\cdot) \in L_{\mathbb{F}}^{2, K}\left(0, +\infty ; \mathbb{R}^{n d}\right) $. And $\gamma(\cdot) \in L^{2,H}(0,+\infty;\mathbb{R}^{n})$, $E\left[H(2H-1)\int_{0}^{T}\int_{0}^{s}|u-s|^{2H-2}\gamma(s)e^{2 K s}duds\right]<+\infty$.
\\(ii) The functions $ b $ and $ \sigma $ are uniformly Lipschitz continuous with respect to $ x $:
$|b(s, x,i_{0})-b(s, \bar{x},i_{0})| \leq l_{b x}|x-\bar{x}|, |\sigma(s, x,i_{0})-\sigma(s, \bar{x},i_{0})| \leq l_{\sigma x}|x-\bar{x}|$,
where $l_{b x}$ and $l_{\sigma x}$ are Lipschitz constants, $i_{0}\in \mathbb{S}$.
\\(iii) The function $ b $ is monotonic with respect to $ x $:
$\langle b(s, x,i_{0})-b(s, \bar{x},i_{0}), x-\bar{x}\rangle \leq-\kappa_{x}|x-\bar{x}|^{2}$,
where $\kappa_{x}\in \mathbb{R}$ is a constant, $i_{0}\in \mathbb{S}$.
\begin{lemma}
  Suppose that Assumption A hold. Let $X(t)$ be solution of (\ref{Xequ}). Then, the It$\hat{o}$ formula applied to $ X(t) $ gives:
\begin{equation}	
	\begin{aligned}
		&f(t_{2}, X(t_{2}), \alpha(t_{2}))=f(t_{1}, X(t_{1}), \alpha(t_{1}))+\int_{t_{1}}^{t_{2}} \mathcal{L} f(s, X(s), \alpha(s)) d s \\&
		\quad \quad \quad+\sum_{i=1}^{d} \int_{t_{1}}^{t_{2}}\left\langle\nabla_{x} f(s, X(s), \alpha(s)), \sigma_{ i}(s, X(s), \alpha(s))\right\rangle d W_{i}(s) \\&
		\quad \quad \quad+ \int_{t_{1}}^{t_{2}}\left\langle\nabla_{x} f(s, X(s), \alpha(s)), \gamma(s)\right\rangle d B^{H}(s) \\&
		\quad \quad \quad+\sum_{i_{0} \neq j_{0}} \int_{t_{1}}^{t_{2}}\bigg[f\left(s, X(s), j_{0}\right)-f\left(s, X(s), i_{0}\right)\bigg] d M_{i_{0} j_{0}}(s), 
	\end{aligned}\label{lemma3.1}
\end{equation}
where $0 \leq t_{1} \leq t_{2} < +\infty$, $\mathcal{L} f\left(t, x, i_{0}\right)=\frac{\partial}{\partial t} f\left(t, x, i_{0}\right)+\mathcal{L}_{i_{0}} f\left(t, x, i_{0}\right)+Q f(t, x, \cdot)\left(i_{0}\right)$, $\mathcal{L}$ is the generator of system (\ref{Xequ}), $\mathcal{L}_{i_{0}} f\left(t, x, i_{0}\right)=b^{T}\left(t, x, i_{0}\right) \nabla_{x} f\left(t, x, i_{0}\right)+\\\frac{1}{2} tr\left(\nabla_{x x}^{2} f\left(t, x, i_{0}\right) A\left(t, x, i_{0}\right)\right)$, $A\left(t, x, i_{0}\right)=\sigma(t, x, i_{0})\sigma^{T}(t, x, i_{0})+H(2H-1)\int_{0}^{t}|u-t|^{2H-2}\gamma(u)du$, $\sigma=(\sigma^{T}_{1},\sigma^{T}_{2},\cdots,\sigma^{T}_{d})^{T}$.
\end{lemma}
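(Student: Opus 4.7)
The strategy is to exploit the assumed mutual independence of $W$, $B^H$, and $\alpha$ in order to reduce the problem to three pieces that can be handled separately and then added together. The claimed identity (\ref{lemma3.1}) is the natural superposition of (i) the classical It\^o formula for the diffusion driven by $W$, (ii) the fractional It\^o formula for $H\in(1/2,1)$ in the Wick--It\^o/divergence sense, and (iii) the It\^o formula for a right-continuous Markov chain written in terms of the compensated jump martingales $M_{i_0 j_0}$. Independence guarantees that no cross quadratic covariation appears between $W$ and $B^H$, or between either continuous noise and the chain, so the three contributions enter additively without interaction.

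Concretely, first I would take a partition $t_1 = s_0 < s_1 < \cdots < s_n = t_2$ and write the telescoping sum
\[
f(t_2, X(t_2), \alpha(t_2)) - f(t_1, X(t_1), \alpha(t_1)) = \sum_{k=0}^{n-1}\big[f(s_{k+1}, X(s_{k+1}), \alpha(s_{k+1})) - f(s_k, X(s_k), \alpha(s_k))\big].
\]
On subintervals where $\alpha$ does not jump I would perform a second-order Taylor expansion in $(t,x)$; at the a.s.\ finitely many jump times of $\alpha$ in $[t_1,t_2]$ I would isolate the exact increments $f(s, X(s), j_0) - f(s, X(s), i_0)$. The $\partial_t f$ piece gives the time integral, the first-order terms in $x$ produce the $dW$ and $dB^H$ stochastic integrals together with the drift $b^T \nabla_x f$, and the chain increments rearrange into $\sum_{i_0\neq j_0}\int(f(\cdot, j_0) - f(\cdot, i_0))\, d[M_{i_0 j_0}]$. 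Subtracting the compensator $\langle M_{i_0 j_0}\rangle$ then produces both the $dM_{i_0 j_0}$ martingale term appearing in (\ref{lemma3.1}) and the generator piece $Qf(t,x,\cdot)(i_0)$ that has been absorbed into $\mathcal{L}$.

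The hard part will be the second-order contribution coming from $B^H$, since for $H\in(1/2,1)$ the fBm has zero pathwise quadratic variation and the naive squaring of increments contributes nothing; the nontrivial second-order term appears only because $\int \gamma\, dB^H$ is interpreted in the Wick--It\^o/divergence sense. To extract it I would condition on the $\sigma$-algebra generated by $W$ and $\alpha$ (freezing the diffusion coefficients $b,\sigma$ and the switching trajectory) and apply a Duncan--Hu--Pasik-Duncan type fractional It\^o formula. Writing $\Phi_\gamma(t):=H(2H-1)\int_0^t |u-t|^{2H-2}\gamma(u)\,du$, the cross quantities of the form $\nabla_{xx}^2 f\cdot \gamma(s)(B^H(s_{k+1})-B^H(s_k))$ accumulate in expectation to $\int \nabla_{xx}^2 f(\cdots)\,\Phi_\gamma(s)\,ds$. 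Adding this to the Brownian second-order term $\tfrac12\, tr(\nabla_{xx}^2 f\,\sigma\sigma^T)$ reproduces exactly the augmented second-order coefficient $A(t,x,i_0)=\sigma\sigma^T+\Phi_\gamma$ that appears inside $\mathcal{L}_{i_0}$.

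Finally, letting $\max_k|s_{k+1}-s_k|\to 0$, one passes to the limit term by term. The integrability hypotheses in Assumption A --- in particular $\gamma\in L^{2,H}(0,+\infty;\mathbb{R}^n)$ and the weighted moment bound on $H(2H-1)\int_0^T\int_0^s|u-s|^{2H-2}\gamma(s)e^{2Ks}\,du\,ds$ --- are precisely what is needed to control both the fractional stochastic integral $\int \nabla_x f\cdot\gamma\,dB^H$ and its associated trace correction in the limit. The remaining $W$ and $\alpha$ limit arguments are standard, and additively combining the three resulting pieces delivers (\ref{lemma3.1}).
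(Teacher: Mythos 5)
Your proposal follows essentially the same route as the paper: the paper likewise first applies the Duncan--Hu--Pasik-Duncan fractional It\^o formula (Theorem~4 of \cite{duncan2000stochastic}) to obtain the combined $W$/$B^H$ expansion with the second-order correction $H(2H-1)\int_0^t|u-t|^{2H-2}\gamma\,du$, then telescopes over the jump times $\rho_1<\cdots<\rho_v$ of $\alpha$, applies that formula on each inter-jump interval where the chain is frozen, and finally rewrites the accumulated jump increments via the compensator decomposition (citing Appendix~A of \cite{nguyen2017milstein}) into the $Qf$ generator term plus the $dM_{i_0j_0}$ martingale integrals. Your partition-and-Taylor presentation and the conditioning remark are only cosmetic differences; the key ingredients and the superposition-by-independence structure coincide with the paper's argument.
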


\begin{proof}
First, we consider the following stochastic differential equation $dx(s) =b(s,x(s))ds+\sum_{i=1}^{d}\sigma_{i}(s,x(s)) dW_{i}(s)+\gamma(s) dB^{H}(s)$, where $ X(t) $ is assumed to be a solution to this equation  According to Theorem 4 in \cite{duncan2000stochastic}, we can apply the It$\rm\hat{o}$ formula with fractional Brownian motion on the time interval $(t_{1},t_{2})$, yielding the following expression
\begin{equation}	
\begin{aligned}
f(t_{2}, &X(t_{2}))=f(t_{1}, X(t_{1}))+\int_{t_{1}}^{t_{2}}\nabla_{t} f(s, X(s))d s+ \int_{t_{1}}^{t_{2}}\left\langle\nabla_{x} f(s, X(s)), \gamma(s)\right\rangle d B^{H}(s)\\&+\frac{1}{2}\int_{t_{1}}^{t_{2}}\nabla_{xx} f(s, X(s))\left[\sum_{i=1}^{d}\sigma_{i}^{2}(s,x(s))+H(2H-1)\int_{0}^{s}|u-s|^{2H-2}\gamma(s)du\right]d s \\&+\sum_{i=1}^{d} \int_{t_{1}}^{t_{2}}\left\langle\nabla_{x} f(s, X(s)), \sigma_{ i}(s, X(s))\right\rangle d W_{i}(s)  .
\end{aligned}
\end{equation}

Next, we examine stochastic differential equations with Markov chains, i.e., the It$\rm\hat{o}$ formula of (\ref{Xequ}).	
Consider the time interval $(t_{1},t_{2})$, let that $\rho_{1}, \rho_{2}, \cdots, \rho_{v}$ denote a sequence of stopping times associated with the Markov chain $\alpha(s)$, satisfying: $t_{1} = \rho_{0} < \rho_{1} < \rho_{2} <...< \rho_{v} < \rho_{v+1} = t_{2}$, where the moment $t_{2}$ may or may not coincide with a stop time. Then we have the following result

$\begin{aligned}
	&f(t_{2}, X(t_{2}), \alpha(t_{2}))-f(t_{1}, X(t_{1}), \alpha(t_{1})) \\& \quad
	=\sum_{n=0}^{v}\bigg[f\left(\rho_{n+1}, X\left(\rho_{n+1}\right), \alpha\left(\rho_{n+1}\right)\right)-f\left(\rho_{n+1}, X\left(\rho_{n+1}\right), \alpha\left(\rho_{n}\right)\right)\bigg] \\&\quad
	+\sum_{n=0}^{v}\bigg[f\left(\rho_{n+1}, X\left(\rho_{n+1}\right), \alpha\left(\rho_{n}\right)\right)-f\left(\rho_{n}, X\left(\rho_{n}\right), \alpha\left(\rho_{n}\right)\right)\bigg]. 
\end{aligned}$

When $s\in (\rho_{n},\rho_{n+1})$, it can be further rewritten as follows:

$\begin{aligned}
	 &f\left(\rho_{n+1}, X\left(\rho_{n+1}\right), \alpha\left(\rho_{n}\right)\right)-f\left(\rho_{n}, X\left(\rho_{n}\right), \alpha\left(\rho_{n}\right)\right) \\&\quad
	=\int_{\rho_{n}}^{\rho_{n+1}}\left[\frac{\partial}{\partial s}+\mathcal{L}_{\alpha\left(\rho_{n}\right)}\right] f\left(s, X(s), \alpha\left(\rho_{n}\right)\right) d s \\& \quad
	+\sum_{i=1}^{d} \int_{\rho_{n}}^{\rho_{n+1}}\left\langle\nabla_{x} f\left(s, X(s), \alpha\left(\rho_{n}\right)\right), \sigma_{i}\left(s, X(s), \alpha\left(\rho_{n}\right)\right)\right\rangle d W_{i}(s) \\&\quad
	 + \int_{\rho_{n}}^{\rho_{n+1}}\left\langle\nabla_{x} f\left(s, X(s), \alpha\left(\rho_{n}\right)\right), \gamma(s)\right\rangle d B^{H}(s) ,
\end{aligned}$
\\ where $\mathcal{L}$ is the generator of system (\ref{Xequ}), $\mathcal{L}_{i_{0}} f\left(t, x, i_{0}\right)=b^{T}\left(t, x, i_{0}\right) \nabla_{x} f\left(t, x, i_{0}\right)+\frac{1}{2} tr\left(\nabla_{x x}^{2} f\left(t, x, i_{0}\right) A\left(t, x, i_{0}\right)\right)$ and $A\left(t, x, i_{0}\right)=\sigma(t, x, i_{0})\sigma^{T}(t, x, i_{0})+H(2H-1)\int_{0}^{t}|u-t|^{2H-2}\gamma(t)du$, $\sigma=(\sigma^{T}_{1},\sigma^{T}_{2},\cdots,\sigma^{T}_{d})^{T}$.

Since $\alpha(s)=\alpha(\rho_{n})$ for $s\in (\rho_{n},\rho_{n+1}), 0\le n\le v$ and noting that $\alpha(\rho_{n})=\alpha(\rho_{n+1}-)$, then

$\begin{aligned}
&f\left(t_{2}, X\left(t_{2}\right), \alpha\left(t_{2}\right)\right)-f\left(t_{1}, X\left(t_{1}\right), \alpha\left(t_{1}\right)\right) \\&\quad
=\int_{t_{1}}^{t_{2}}\left[\frac{\partial}{\partial s}+\mathcal{L}_{\alpha\left(s\right)}\right] f\left(s, X(s), \alpha\left(s\right)\right) d s \\& \quad
+\sum_{i=1}^{d} \int_{t_{1}}^{t_{2}}\left\langle\nabla_{x} f\left(s, X(s), \alpha\left(s\right)\right), \sigma_{i}\left(s, X(s), \alpha\left(s\right)\right)\right\rangle d W_{i}(s) \\&\quad
+ \int_{t_{1}}^{t_{2}}\left\langle\nabla_{x} f\left(s, X(s), \alpha\left(s\right)\right), \gamma(s)\right\rangle d B^{H}(s)\\&\quad
+\sum_{n=1}^{v+1}\bigg[f\left(\rho_{n}, X\left(\rho_{n}\right),\alpha\left(\rho_{n}\right)\right)-f\left(\rho_{n}, X\left(\rho_{n}\right),\alpha\left(\rho_{n}-\right)\right)\bigg] .
\end{aligned}$

As stated in Appendix A of \cite{nguyen2017milstein}, the final term in the above equation can be expressed as follows:
\\$\begin{aligned}
 &\sum_{n=1}^{v+1}\bigg[f\left(\rho_{n}, X\left(\rho_{n}\right), \alpha\left(\rho_{n}\right)\right)-f\left(\rho_{n}, X\left(\rho_{n}\right), \alpha\left(\rho_{n}-\right)\right)\bigg] \\&
=  \sum_{j_{0} \in M} \int_{t_{1}}^{t_{2}} q_{\alpha(s-), j_{0}}\bigg[ f\left(s, X(s), j_{0}\right)-f(s, X(s), \alpha(s-))\bigg] d s \\&\quad
 +\sum_{i_{0} \neq j_{0}} \int_{t_{1}}^{t_{2}}\bigg[ f\left(s, X(s), j_{0}\right)-f\left(s, X(s), i_{0}\right)\bigg] d M_{i_{0} j_{0}}(s) \\&
=  \int_{t_{1}}^{t_{2}} Q f(s, X(s), \cdot)(\alpha(s)) d s+\sum_{i_{0} \neq j_{0}} \int_{t_{1}}^{t_{2}}\bigg[f\left(s, X(s), j_{0}\right)-f\left(s, X(s), i_{0}\right)\bigg] d M_{i_{0} j_{0}}(s),
\end{aligned}$
\\ where $Q f(t, x, \cdot)\left(i_{0}\right)=\sum_{j_{0} \in M} q_{i_{0} j_{0}}[f\left(t, x, j_{0}\right)-f\left(t, x, i_{0}\right)]$.

As such, we derive that

$\begin{aligned}
&f(t_{2}, X(t_{2}), \alpha(t_{2}))=f(t_{1}, x(t_{1}), \alpha(t_{1}))+\int_{t_{1}}^{t_{2}} \mathcal{L} f(s, X(s), \alpha(s)) d s \\&
\quad+\sum_{i=1}^{d} \int_{t_{1}}^{t_{2}}\left\langle\nabla_{x} f(s, X(s), \alpha(s)), \sigma_{ i}(s, X(s), \alpha(s))\right\rangle d W_{i}(s) \\&
\quad+ \int_{t_{1}}^{t_{2}}\left\langle\nabla_{x} f(s, X(s), \alpha(s)), \gamma(s)\right\rangle d B^{H}(s) \\&
\quad+\sum_{i_{0} \neq j_{0}} \int_{t_{1}}^{t_{2}}\bigg[f\left(s, X(s), j_{0}\right)-f\left(s, X(s), i_{0}\right)\bigg] d M_{i_{0} j_{0}}(s), \quad 0 \leq t_{1} \leq t_{2} < +\infty ,
\end{aligned}$
\\where $\mathcal{L} f\left(t, x, i_{0}\right)=\frac{\partial}{\partial t} f\left(t, x, i_{0}\right)+\mathcal{L}_{i_{0}} f\left(t, x, i_{0}\right)+Q f(t, x, \cdot)\left(i_{0}\right)$.
\end{proof}

\textit{Remark}: To prove the existence and uniqueness of the solution to (\ref{Xequ}), It$\rm\hat{o}$'s formula is required. However, since the problem addressed in this paper is new, no existing form of It$\rm\hat{o}$'s formula is directly applicable. As a result, in Lemma 3.1, we develop a novel version of It$\rm\hat{o}$'s formula to aid in the subsequent proof.

\begin{theorem}
	Suppose that Assumption A holds, then there exists a unique solution to SDE (\ref{Xequ}) on $[0,+\infty)$.
\end{theorem}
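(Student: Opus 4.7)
The plan is to combine a Banach fixed-point argument on the weighted Bochner space $L_{\mathbb{F}}^{2,K}(0,+\infty;\mathbb{R}^{n})$ with the new It\^o formula established in Lemma~3.1. The exponential weight $e^{2Ks}$ built into the norm, together with the monotonicity constant $\kappa_x$ from Assumption~A(iii) and the Lipschitz constants $l_{bx},l_{\sigma x}$ from A(ii), is what allows the finite-horizon estimates to be pushed out uniformly to $s=+\infty$, so no separate piecing-together procedure over growing intervals $[0,T]$ is needed.

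For uniqueness, I would let $X,\bar X$ be two solutions issued from $(t_0,x_{t_0},i)$ and set $Z=X-\bar X$. Because the fBm coefficient $\gamma(\cdot)$ is not state-dependent, $Z$ solves an equation driven only by $ds$, $dW$ and $dM_{i_0j_0}$, with no fractional term. Applying Lemma~3.1 to $f(s,z,i_0)=e^{2Ks}|z|^{2}$ and taking expectation kills the $dW_i$ and $dM_{i_0j_0}$ integrals; the remaining drift comprises $2Ke^{2Ks}|Z|^{2}$, a contribution $2\langle b(s,X,\alpha)-b(s,\bar X,\alpha),Z\rangle e^{2Ks}$ controlled by $-2\kappa_x e^{2Ks}|Z|^{2}$, a diffusion term bounded by $l_{\sigma x}^{2}e^{2Ks}|Z|^{2}$, and a $Qf(s,\cdot,\cdot)(\alpha)$ term whose sum over states is non-positive by the structure of the $Q$-matrix. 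Choosing $K$ so that $2K-2\kappa_x+l_{\sigma x}^{2}<0$ (equivalently, absorbing the positive constants into the exponential weight) yields $e^{2Ks}E|Z(s)|^{2}\le 0$, hence $Z\equiv 0$.

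For existence I would run a Picard/Banach contraction on $L_{\mathbb{F}}^{2,K}(0,+\infty;\mathbb{R}^{n})$ by defining $\Phi(y)=x$, where $x$ solves the linearized SDE in which the state argument of $b$ and $\sigma$ is frozen at $y$ while the fBm driver $\gamma(s)\,dB^{H}(s)$ and the Markov chain $\alpha(s)$ are kept intact. The integrability condition $E\bigl[H(2H-1)\int_{0}^{T}\!\int_{0}^{s}|u-s|^{2H-2}\gamma(s)e^{2Ks}\,du\,ds\bigr]<+\infty$ in Assumption~A(i), together with $\gamma\in L^{2,H}(0,+\infty;\mathbb{R}^{n})$, guarantees that $\Phi$ maps the weighted space into itself. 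Applying Lemma~3.1 to $|\Phi(y_1)-\Phi(y_2)|^{2}e^{2Ks}$ and again invoking monotonicity plus Lipschitz bounds gives a contraction constant strictly less than one for $K$ chosen sufficiently negative (or, if $\kappa_x$ is not large, by first introducing a penalization $\lambda(y-x)\,ds$ and continuing in $\lambda$ as in the method of continuation for monotone FBSDE). The fixed point is then the unique global solution.

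The main obstacle is the fractional Brownian driver: since $B^{H}$ is neither a martingale nor a semimartingale, the usual It\^o isometry and BDG-type moment controls are unavailable, and one has to estimate $\int_{0}^{+\infty}\gamma(s)\,dB^{H}(s)$ through the bilinear form $\varphi_{H}(u,s)=H(2H-1)|u-s|^{2H-2}$ consistently with the weight $e^{2Ks}$. A secondary, but essentially routine, difficulty is verifying that the jump martingales $M_{i_0j_0}$ produced by the chain contribute $L^{2}$-bounded terms uniformly in $s$; this follows from the finiteness of $\mathbb{S}$ and boundedness of the transition rates $q_{i_0j_0}$, which make the $Q$-generator term in Lemma~3.1 absorbable into the drift estimates.
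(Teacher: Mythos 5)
Your route is genuinely different from the paper's. The paper never leaves the finite horizon in this proof: it fixes an arbitrary $T$, runs a Picard contraction in the Bielecki-type norm $\bigl(\sup_{t\in[0,T]}e^{-at}E\int_0^t|X(s)|^2\,ds\bigr)^{1/2}$ with $a$ large, and then invokes an extension theorem to pass to $[0,+\infty)$; neither Lemma~3.1 nor the monotonicity condition A(iii) is used at all in Theorem~3.2 (integrability of the solution in $L^{2,K}_{\mathbb{F}}(0,+\infty;\mathbb{R}^n)$ is deferred to Lemma~3.4, where monotonicity finally enters). Your uniqueness half is correct and, if anything, cleaner than what the paper offers: since $\gamma$ is state-independent the difference of two solutions has no fractional term, and Gronwall applied to $E|X(t)-\bar X(t)|^2$ with zero initial data gives $X\equiv\bar X$ with no condition on $K$ whatsoever (note also that the $Qf$ term is exactly zero for $f$ independent of the chain state, not merely non-positive).

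The existence half, however, has a concrete gap. For the frozen-coefficient map $\Phi(y)=x$, the drift and diffusion are evaluated at the \emph{input} $y$, not at the output $\Phi(y)$, so the inner product $\langle b(s,y_1)-b(s,y_2),\Phi(y_1)-\Phi(y_2)\rangle$ has no sign and Assumption A(iii) buys you nothing in the contraction estimate; invoking ``monotonicity plus Lipschitz bounds'' there is a misattribution. What actually has to be proved — and is the whole difficulty of working directly on $(0,+\infty)$ — is that the Lebesgue-integral operator $h\mapsto\int_{t_0}^{\cdot}h(s)\,ds$ is bounded with small norm on $L^{2,K}_{\mathbb{F}}(0,+\infty;\mathbb{R}^n)$. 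This is false on the unweighted space and requires a weighted Hardy/Fubini estimate of the form $E\int_0^{\infty}e^{2Kt}\bigl|\int_0^t h\,ds\bigr|^2dt\leq K^{-2}\,E\int_0^{\infty}e^{2Ks}|h(s)|^2ds$, valid only for $K<0$; since the $K$ of Assumption A is prescribed and may be nonnegative, ``choose $K$ sufficiently negative'' must mean passing to the strictly larger space $L^{2,K'}_{\mathbb{F}}$ with $K'\ll 0$ (in which even the constant contribution $x_{t_0}$ and $\int_0^{\cdot}b(s,0,\alpha(s))\,ds$ first become admissible), and then recovering uniqueness in the original class from your Gronwall argument. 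None of this is in your sketch, and without it the assertion that $\Phi$ maps the weighted space into itself and is a strict contraction is unsupported. Your fallback via the method of continuation would repair this (there the dissipative part $-\kappa_x x$ is retained in the equation rather than frozen, which is exactly what restores the decay), but it is only named, not carried out. The paper's finite-horizon-plus-extension argument sidesteps all of this at the cost of a two-stage proof; your approach, once the Hardy-type estimate is supplied, yields global well-posedness in a weighted space in one stroke.
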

\begin{proof}
	
For any $T\in [0,+\infty)$, consider the process space $L^{2}(T)$, which consists of processes $\left \{ \mathcal{F}_{t} ,t\ge 0 \right \} $ adapted to the process and satisfies $\left\| X \right\| _{L^{2}(T) } =\sqrt{E\int_{0}^{T}X^{2} (t)dt } <+\infty $. And the space consisting of the full process $X=\left \{ X_{t} , 0\le t\le T \right \}$. It follows that $(L^{2}(T),\left\| \cdot \right\|_{L^{2}(T) })$ is a Banach space and $ \left\|X \right\|=\sqrt{\sup_{t\in [0,T]}e^{-at}  E\int_{0}^{t}X^{2} (s)ds } $ is an equivalent norm for any given $a>0$. 

Given any $x=\left \{ x(t), 0 \le t \le T, x_{0}=\zeta \right \} \in L^{2} (T)$, define the process
\begin{equation}
X(t)=\zeta +\int_{0}^{t} b(s,x(s), \alpha(s))ds+\sum_{i=1}^{d}\int_{0}^{t}  \sigma_{i} (s,x(s),\alpha(s))dW_{i}(s)+\int_{0}^{t}\gamma(s)dB^{H}(s).
\label{Xjie}
\end{equation}
Under Assumption A, it is established that
 $$E\int_{0}^{T} |b(s,x(s),\alpha(s))|^{2}ds\le CE\int_{0}^{T}\left[1+|x(s)|^{2}ds\right]<+\infty, $$
$$E\int_{0}^{T} |\sigma (s,x(s),\alpha(s))|^{2} ds\le CE\int_{0}^{T}\left[1+|x(s)|^{2} ds\right]<+\infty $$
and building on Lemma 4.1 in \cite{fan2023asymptotic}, it follows that
$$E\left[\int_{0}^{t}\gamma(s)dB^{H}(s)\right]^{2}\le CE\int_{0}^{T}|\gamma(s)|^{2}ds< +\infty,$$
where $C$ is a constant. As a result, for any $\forall \, t \in [0,T]$, $X(t)$ is well-defined, and there is

$\begin{aligned}
	&E\left[|X(t)|^{2} \right]\le 4E\zeta ^{2} +4TE\int_{0}^{t} |b (s,x(s),\alpha(s)|^{2} ds+4E\int_{0}^{t} |\sigma (s,x(s), \alpha(s)|^{2} ds\\&\quad \quad \quad \quad \quad+4E\left[\int_{0}^{t}\gamma(s)dB^{H}(s)\right]^{2}
	\le 4E\zeta ^{2}+C_{1}E\int_{0}^{T}\left[1+|x(s)|^{2}\right] ds+C_{t},
	\end{aligned}$
\\where $\sigma=(\sigma^{T}_{1},\sigma^{T}_{2},\cdots,\sigma^{T}_{d})^{T}$. Thus $E\int_{0}^{T}|X(t)|^{2}<+\infty$. 

Define the mapping $A: x\mapsto  X$ over $L^{2} (T)$.
For any  $x^{(1)},x^{(2)}\in L^{2} (T)$, set $X^{(1)}=A(x^{(1)}),X^{(2)}=A(x^{(2)})$, we conclude that
\\$\begin{aligned}
\left\|A(x^{(1)})-A(x^{(2)})\right\|^{2}=\left\| X^{(1)}-X^{(2)}\right\|^{2}=\sup _{t \in[0, T]} e^{-a t} E \int_{0}^{t}\left|X^{(1)}(s)-X^{(2)}(s)\right|^{2} d s.
\end{aligned}$
\\Because	

$\begin{aligned}
	&X^{(1)}(t)-X^{(2)}(t)=\int_{0}^{t}\left[b(s, x^{(1)}(s),\alpha(s)-b(s, x^{(2)}(s),\alpha(s))\right] d s\\&\quad \quad \quad \quad \quad \quad \quad \quad+\int_{0}^{t}\left[\sigma(s, x^{(1)}(s),\alpha(s))-\sigma(s, x^{(2)}(s),\alpha(s))\right] d W(s),
\end{aligned}$
\\then taking the expectation of both sides gives

$\begin{aligned}
	&E\left[X^{(1)}(t)-X^{(2)}(t)\right]^{2}\\& \le 2E\left\{ \int_{0}^{t}\left[b(s, x^{(1)}(s),\alpha(s))-b(s, x^{(2)}(s),\alpha(s))\right] d s\right\}^{2}\\&+2E\left\{\int_{0}^{t}\left[\sigma(s, x^{(1)}(s),\alpha(s))-\sigma(s, x^{(2)}(s),\alpha(s))\right] d W(s)\right\}^{2}\\& \le 2 L^{2} T E\left[\int_{0}^{t}\left|x^{(1)}(s)-x^{(2)}(s)\right|^{2} d s\right]+2 L^{2} E\left[\int_{0}^{t}\left|x^{(1)}(s)-x^{(2)}(s)\right|^{2} d s\right]\\& = 2 L^{2}(T+1) E\left[\int_{0}^{t}\left|x^{(1)}(s)-x^{(2)}(s)\right|^{2} d s\right]\\& \le 2 L^{2}(T+1) e^{a t}\left\|x^{(1)}-x^{(2)}\right\|^{2},
\end{aligned}$,	
\\As a result, we derive	

$\begin{aligned}
	&\left\| A(x^{(1)})-A(x^{(2)})\right\|^{2}  =\left\|X^{(1)}-X^{(2)}\right\|^{2}\\&=\sup _{t \in[0, T]} e^{-a t} E \int_{0}^{t}\left|X^{(1)}(s)-X^{(2)}(s)\right|^{2} ds \\&
\le \sup _{t \in[0, T]} e^{-a t} 2 L^{2}(T+1)\left\|x^{(1)}-x^{(2)}\right\|^{2} \int_{0}^{t} e^{a s} ds \\&
	 \le \sup _{t \in[0, T]}\left[1-e^{-a t}\right] \frac{2 L^{2}(T+1)}{a}\left\| x^{(1)}-x^{(2)}\right\|^{2} \\&
	 \le \left[1-e^{-a T}\right] \frac{2 L^{2}(T+1)}{a}\left\| x^{(1)}-x^{(2)}\right\|^{2}.
	\end{aligned}$
	
	Take $a>0$ sufficiently large such that $\left[1-e^{-a T}\right] \frac{2 L^{2}(T+1)}{a}<1$, ensuring $A$ is a compression map on $L^{2}(T)$. By the Banach fixed-point theorem, we deduce that there exists a
	unique fixed point on $[0,T]$, which solves equation (\ref{Xjie}). Then, applying the extension theorem, equation (\ref{Xequ}) admits a unique solution according the extension theorem on $[0,+\infty)$.
	\end{proof}
	
	\textit{Remark}: The existence and uniqueness of solutions to classical stochastic control equations have already been established for finite time, meaning that most finite-time stochastic LQ problems do not require further consideration of solution existence. However, this paper focuses on a more complex class of novel stochastic equation within an infinite time horizon, involving Markov chains and fractional Brownian motion. Consequently, in Theorem 3.2, we use the Banach fixed-point theorem to first establish the existence and uniqueness of the solution to (\ref{Xequ}).
	
\begin{lemma}
	Let Assumption A hold. Assuming further that the solution $x$ of (\ref{Xequ}) belongs to $L_{\mathbb{F}}^{2, K}\left(0, +\infty ; \mathbb{R}^{n}\right)$, there exists
	\begin{equation}
		\lim_{u \to +\infty}E\left[|x(u)|^{2} e^{2K u}\right]=0.
	\end{equation}
\end{lemma}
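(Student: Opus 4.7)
The plan is to apply the It\^o formula of Lemma 3.1 to the test function $f(t,x,i_0):=e^{2Kt}|x|^2$ on $[0,t]$ and extract an integral identity for $\phi(t):=E\!\left[|x(t)|^2 e^{2Kt}\right]$. Since $f$ is independent of $i_0$, both the Markov-chain generator term $Qf$ and the $dM_{i_0 j_0}$-integrals vanish identically. Under the hypotheses the $dW_i$-integrals are true martingales of zero mean, and the divergence-type $dB^{H}$-integral against $\nabla_x f=2e^{2Ks}x$ also has zero expectation in the Malliavin framework underlying Lemma 3.1. Taking expectations therefore yields
\[
\phi(t)=\phi(0)+E\int_{0}^{t}\mathcal{L}f(s,x(s),\alpha(s))\,ds,
\]
with $\mathcal{L}f(s,x,i_0)=2Ke^{2Ks}|x|^2+2e^{2Ks}\langle x,b(s,x,i_0)\rangle+e^{2Ks}\,\mathrm{tr}\,A(s,x,i_0)$.

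Next, I bound the drift integrand using Assumption A. The Lipschitz property gives $|b(s,x,i_0)|\le l_{bx}|x|+|b(s,0,i_0)|$ and $|\sigma(s,x,i_0)|^2\le 2l_{\sigma x}^2|x|^2+2|\sigma(s,0,i_0)|^2$, while monotonicity applied with $\bar x=0$ contributes the favourable sign $-2\kappa_x|x|^2$ for the leading quadratic. Combining these with Young's inequality, I dominate $|\mathcal{L}f(s,x(s),\alpha(s))|$ by a linear combination of $|x(s)|^2 e^{2Ks}$, $|b(s,0,\alpha(s))|^2 e^{2Ks}$, $|\sigma(s,0,\alpha(s))|^2 e^{2Ks}$, and the fractional-kernel term inside $\mathrm{tr}\,A$. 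After taking expectations, each of these is integrable on $[0,+\infty)$: the $|x(s)|^2 e^{2Ks}$ part by the standing hypothesis $x\in L_{\mathbb{F}}^{2,K}(0,+\infty;\mathbb{R}^n)$, and the remaining three by the corresponding clauses of Assumption A(i). Consequently, $s\mapsto E|\mathcal{L}f(s,x(s),\alpha(s))|$ belongs to $L^{1}([0,+\infty))$.

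The identity above then realises $\phi$ as a finite constant plus the indefinite integral of an $L^{1}$-function, so the limit $L:=\lim_{t\to+\infty}\phi(t)$ exists and is finite. Since $\phi\ge 0$ and $\int_0^{+\infty}\phi(s)\,ds<+\infty$ by hypothesis, any value $L>0$ would force $\phi(s)\ge L/2$ for all sufficiently large $s$, contradicting integrability; hence $L=0$, which is the claim.

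The main obstacle I anticipate lies in the rigorous treatment of the fractional-Brownian contributions: one must verify that the $dB^{H}$-integral in Lemma 3.1, evaluated against $\nabla_x f=2e^{2Ks}x$, genuinely has zero expectation in the divergence/Skorohod sense, and that the fractional-kernel correction $H(2H-1)\int_0^s|u-s|^{2H-2}\gamma(u)\,du$ inside $\mathrm{tr}\,A(s,x,i_0)$ is absorbed by the weighted $\gamma$-integrability stipulated in Assumption A(i). The $b$- and $\sigma$-estimates, by contrast, are routine once monotonicity is in hand.
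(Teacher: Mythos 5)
Your proposal is correct and follows essentially the same route as the paper: apply the It\^o formula of Lemma~3.1 to $|x(s)|^2e^{2Ks}$, take expectations, bound the drift via Assumption~A so that the integrand is in $L^1([0,+\infty))$, deduce that $\lim_{u\to+\infty}E[|x(u)|^2e^{2Ku}]$ exists, and then force it to be zero from $\int_0^{+\infty}E[|x(u)|^2e^{2Ku}]\,du<+\infty$. Your packaging of the existence of the limit (a constant plus the indefinite integral of an $L^1$ function) is just a cleaner restatement of the paper's Cauchy-increment argument on $[u_1,u_2]$, and the fractional-Brownian caveats you flag are exactly the points the paper also treats only implicitly.
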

\begin{proof}
	For any $u\in [0, +\infty)$, we apply It$\rm\hat{o}$'s formula of Lemma 3.1 to $|x(s)|^{2} e^{2K s}$ on the interval $[0,u]$:
\\$\begin{aligned}
E\left[|x(u)|^{2} e^{2K u}\right]&=  \left|x_{0}\right|^{2}+E \int_{0}^{u} 2\langle x(s), b(s, x(s),\alpha(s)\rangle e^{2K s} d s  
 +E \int_{0}^{u}\bigg[2K|x(s)|^{2}e^{2K s}\\&+|\sigma(s, x(s),\alpha(s))|^{2}e^{2K s}+H(2H-1)\int_{0}^{s}|r-s|^{2H-2}\gamma(s)e^{2K s}dr\bigg]  d s .
\end{aligned}$

By the Lipschitz continuity condition, we have

$\begin{aligned}
&E\left[|x(u)|^{2} e^{2K u}\right] \leq  \left|x_{0}\right|^{2}+\left(1+2|K|+2 C_{1}+2C_{2}^{2}\right) E \int_{0}^{u}|x(s)|^{2} e^{2K s} d s \\&
\quad \quad \quad \quad \quad \quad \quad  +E \int_{0}^{u}\bigg[|b(s, 0,\alpha(s))|^{2}+2|\sigma(s, 0,\alpha(s))|^{2}\bigg] e^{2K s} d s+C_{3},
\end{aligned}$
\\where $C_{1}, C_{2},C_{3}$ are  constants. Due to the fact that $  x(\cdot), b(\cdot, 0,\cdot) \in L_{\mathbb{F}}^{2, K}\left(0, +\infty ; \mathbb{R}^{n}\right)$ and $\sigma(\cdot, 0,\cdot) \in L_{\mathbb{F}}^{2, K}\left(0, +\infty ; \mathbb{R}^{n \times d}\right)$, the above inequality suggests the deterministic process $\left \{E\left[|x(u)|^{2} e^{2K u}\right]; u \ge 0 \right \} $ is bounded. Moreover, applying It$\rm\hat{o}$'s formula to $|x(s)|^{2} e^{2K s}$ on the interval $[u_{1},u_{2}]$:
\\$\begin{aligned}
&\left|E\left[\left|x\left(u_{2}\right)\right|^{2} e^{2K u_{2}}\right]-E\left[\left|x\left(u_{1}\right)\right|^{2} e^{2K u_{1}}\right]\right|\\& \leq  
 E \int_{u_{1}}^{u_{2}}\bigg[|b(s, 0,\alpha(s))|^{2}+2|\sigma(s, 0,\alpha(s))|^{2}+\left(1+2|K|+2 C+2 C^{2}\right)|x(s)|^{2}\bigg] e^{2K s} d s,
\end{aligned}$
\\which indicates that the process $\left \{E\left[|x(u)|^{2} e^{2K u}\right]; u \ge 0 \right \} $ is continuous. The inequality also shows that, $E\left[\left|x\left(u_{2}\right)\right|^{2} e^{2K u_{2}}\right]-E\left[\left|x\left(u_{1}\right)\right|^{2} e^{2K u_{1}}\right] \to 0$ as $u_{1}, u_{2} \to +\infty$, then $\lim\limits_{u \to +\infty}E\left[|x(u)|^{2} e^{2K u}\right]$ exists. What's more, due to $\int_{0}^{+\infty}E\left[|x(u)|^{2} e^{2K u}\right]du <+\infty$, we get $\lim\limits_{u \to +\infty}E\left[|x(u)|^{2} e^{2K u}\right]=0$.
\end{proof}

\begin{lemma}
	Assume that the coefficients $(x_{t_{0}},b,\sigma)$ satisfy Assumption A and $K<\kappa_{x}-(l_{\sigma x}^{2}/2)$. Then, the solution $x$ of (\ref{Xequ}) belongs to $L_{\mathbb{F}}^{2, K}\left(0, +\infty ; \mathbb{R}^{n}\right)$. Moreover,
	for any $\mu >0$, the following estimate holds:
	\begin{equation}
		\begin{aligned}
		&\left(2 \kappa_{x}-2 K-l_{\sigma x}^{2}-2 \mu\right) E \int_{t_{0}}^{+\infty}\left|x(s) e^{K s}\right|^{2} d s 
		 \leq E\bigg\{\left|x_{t_{0}} e^{K t_{0}}\right|^{2}\\&\quad \quad+\int_{t_{0}}^{+\infty}\left[\frac{1}{\mu}\left|b(s, 0,\alpha(s)) e^{K s}\right|^{2}+\left(1+\frac{l_{\sigma x}^{2}}{\mu}\right)\left|\sigma(s, 0,\alpha(s)) e^{K s}\right|^{2}\right] d s\bigg\}+C .
		\end{aligned}\label{xe1}
	\end{equation}
	Let $ \bar{x}_{t_{0}} $ be the solution of another set of coefficients $ (\bar{x}_{t_{0}},\bar{b}, \bar{\sigma}) $ of the SDE (\ref{Xequ}) satisfying Assumption A. Then, for any $\mu >0$, the following estimate is valid:
	\begin{equation}
		\begin{aligned}
		&\left(2 \kappa_{x}\right.  \left.-2 K-l_{\sigma x}^{2}-2 \mu\right) E \int_{t_{0}}^{+\infty}\left|(x(s)-\bar{x}(s)) e^{K s}\right|^{2} d s \\& \quad
		\leq  E\left\{\left|\left(x_{t_{0}}-\bar{x}_{t_{0}}\right) e^{K t_{0}}\right|^{2}+\int_{t_{0}}^{+\infty}\left[\frac{1}{\mu}\left|(b(s, \bar{x}(s),\alpha(s))-\bar{b}(s, \bar{x}(s),\alpha(s)) e^{K s}\right|^{2}\right.\right. \\&\quad
		 \left.\left.+\left(1+\frac{l_{\sigma x}^{2}}{\mu}\right)\left|(\sigma(s, \bar{x}(s),\alpha(s))-\bar{\sigma}(s, \bar{x}(s)),\alpha(s)) e^{K s}\right|^{2}\right] d s\right\} .
		\end{aligned}\label{xe2}
	\end{equation}
\end{lemma}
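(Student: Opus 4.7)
The plan is to apply the Itô formula from Lemma 3.1 to the functional $f(s,x,i_0) = |x|^{2} e^{2Ks}$, which is independent of the Markov state so the $Qf$ and $dM_{i_0 j_0}$ contributions vanish identically, and then combine monotonicity of $b$ with the Lipschitz bound on $\sigma$ and Young's inequality to obtain a coercive estimate. Estimate (\ref{xe2}) will follow by applying the identical strategy to the difference process $x - \bar x$.

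To legalize the vanishing of the Brownian and fractional-Brownian stochastic integrals in expectation I first localize with $\tau_n := \inf\{s \ge t_0 : |x(s)| \ge n\} \wedge T$; Lemma 3.1 applied on $[t_0,\tau_n]$ and taking expectation then give
\begin{equation*}
E\bigl[|x(\tau_n)|^{2} e^{2K\tau_n}\bigr] = |x_{t_0}|^{2} e^{2Kt_0} + E\!\int_{t_0}^{\tau_n}\! e^{2Ks}\Bigl[2K|x|^{2} + 2\langle x, b(s,x,\alpha(s))\rangle + |\sigma(s,x,\alpha(s))|^{2} + \Phi(s)\Bigr] ds,
\end{equation*}
where $\Phi(s)$ collects the trace of the fBm Itô correction $H(2H-1)\int_0^s |u-s|^{2H-2}\gamma(u)\,du$ and $C_\Phi := E\!\int_0^{\infty}|\Phi(s)|e^{2Ks}\,ds$ is finite by Assumption A(i). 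Splitting $b(s,x,\alpha) = [b(s,x,\alpha) - b(s,0,\alpha)] + b(s,0,\alpha)$, monotonicity combined with Young's inequality yields, for any $\mu > 0$,
\begin{equation*}
2\langle x, b(s,x,\alpha)\rangle \le (-2\kappa_x + \mu)|x|^{2} + \tfrac{1}{\mu}|b(s,0,\alpha)|^{2},
\end{equation*}
and the elementary bound $(p+q)^2 \le (1+\mu/l_{\sigma x}^{2})p^2 + (1+l_{\sigma x}^{2}/\mu)q^2$ together with Lipschitz continuity of $\sigma$ produces
\begin{equation*}
|\sigma(s,x,\alpha)|^{2} \le (l_{\sigma x}^{2} + \mu)|x|^{2} + \bigl(1 + \tfrac{l_{\sigma x}^{2}}{\mu}\bigr)|\sigma(s,0,\alpha)|^{2}.
\end{equation*}
Substituting these into the Itô identity, dropping the non-negative boundary term on the left, and collecting coefficients leaves
\begin{equation*}
(2\kappa_x - 2K - l_{\sigma x}^{2} - 2\mu)\,E\!\int_{t_0}^{\tau_n}\!|x(s)|^{2} e^{2Ks}\, ds \le |x_{t_0}|^{2} e^{2Kt_0} + E\!\int_{t_0}^{T}\Bigl[\tfrac{1}{\mu}|b(s,0,\alpha)|^{2} + \bigl(1 + \tfrac{l_{\sigma x}^{2}}{\mu}\bigr)|\sigma(s,0,\alpha)|^{2}\Bigr] e^{2Ks}\, ds + C_\Phi.
\end{equation*}

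Since $K < \kappa_x - l_{\sigma x}^{2}/2$, I fix $\mu > 0$ small enough that $2\kappa_x - 2K - l_{\sigma x}^{2} - 2\mu > 0$; because the right-hand side is uniformly bounded in $n$ and $T$ by Assumption A, monotone convergence first in $n$ and then in $T$ shows $x \in L_{\mathbb{F}}^{2,K}(0,+\infty;\mathbb{R}^{n})$ and produces (\ref{xe1}), with Lemma 3.3 confirming that the dropped boundary term $E[|x(T)|^{2} e^{2KT}]$ vanishes in the limit so that no slack is lost. For (\ref{xe2}), the same machinery applied to $|x(s)-\bar x(s)|^{2} e^{2Ks}$ goes through once the drift difference is decomposed as $[b(s,x,\alpha) - b(s,\bar x,\alpha)] + [b(s,\bar x,\alpha) - \bar b(s,\bar x,\alpha)]$, the first summand contributing $-\kappa_x|x-\bar x|^{2}$ by monotonicity and the second absorbed via Young's inequality into the $\tfrac{1}{\mu}|b-\bar b|^{2}$ term, with the diffusion split analogously; since the fBm driver enters (\ref{Xequ}) through the same $\gamma$ for both solutions, its Itô correction cancels and no additional constant appears on the right. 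The main obstacle, I expect, is the fBm contribution itself: because $B^H$ is neither a martingale nor a semimartingale, justifying both that the $dB^H$ integral is centered in expectation and that the correction $\Phi(s)$ from Lemma 3.1 is integrable uniformly on $[0,+\infty)$ hinges on the integrability hypothesis in Assumption A(i); once these points are secured, the remainder is a classical monotone-SDE estimate carried over to infinite horizon.
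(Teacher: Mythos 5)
Your proposal is correct and follows essentially the same route as the paper: apply the It\^o formula of Lemma 3.1 to $|x(s)e^{Ks}|^{2}$, split $b$ and $\sigma$ about $x=0$ (respectively about $\bar x$ for \eqref{xe2}), use monotonicity, the Lipschitz bound, and Young's inequality, absorb the fBm correction into a constant via Assumption A(i), and pass to the limit $T\to+\infty$. Your version is in fact slightly more careful than the paper's, since localizing by stopping times and dropping the nonnegative terminal term $E[|x(T)|^{2}e^{2KT}]$ avoids the mild circularity of invoking Lemma 3.3 (which presupposes $x\in L_{\mathbb{F}}^{2,K}$) before integrability has been established.
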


\begin{proof}
	For any $T>t_{0}$, applying It$\rm\hat{o}$'s formula to $|x(s) e^{K s}|^{2}$ on the interval $[t_{0},T]$:
	\\$\begin{aligned}
	&E  \left\{\left|x(T) e^{K T}\right|^{2}-2 K \int_{t_{0}}^{T}\left|x(s) e^{K s}\right|^{2} d s\right\} \\&
	=  E\bigg\{\left|x_{t_{0}} e^{K t_{0}}\right|^{2}+\int_{t_{0}}^{T}\bigg[2\langle x(s), b(s, x(s),\alpha(s))\rangle e^{2 K s}+|\sigma(s, x(s),\alpha(s))|^{2}e^{2 K s}\\&\quad+H(2H-1)\int_{0}^{s}|u-s|^{2H-2}\gamma(s)e^{2 K s}du\bigg]  d s\bigg\} \\&
	\leq  E\bigg\{\left|x_{t_{0}} e^{K t_{0}}\right|^{2}+\int_{t_{0}}^{T}\bigg[2\langle x(s), b(s, x(s),\alpha(s))-b(s, 0,\alpha(s))\rangle+2|x(s)||b(s, 0,\alpha(s))| \\&\quad
	+(|\sigma(s, x(s),\alpha(s))-\sigma(s, 0,\alpha(s))|+|\sigma(s, 0,\alpha(s))|)^{2}\bigg] e^{2 K s} d s\bigg\}+C\\&
	\leq E\bigg\{\left|x_{t_{0}} e^{K t_{0}}\right|^{2}+\int_{t_{0}}^{T}\bigg[-2 \kappa_{x}|x(s)|^{2}+2|x(s)||b(s, 0,\alpha(s))|\\& \quad +\left(l_{\sigma x}|x(s)|+|\sigma(s, 0,\alpha(s))|\right)^{2}\bigg] e^{2 K s} d s\bigg\} +C\\&
	\leq E\bigg\{\left|x_{t_{0}} e^{K t_{0}}\right|^{2}+\int_{t_{0}}^{T}\bigg[\left(l_{\sigma x}^{2}+2 \mu-2 \kappa_{x}\right)|x(s)|^{2}+\frac{1}{\mu}|b(s, 0,\alpha(s))|^{2}\\&\quad +\left(1+\frac{l_{\sigma x}^{2}}{\mu}\right)|\sigma(s, 0,\alpha(s))|^{2}\bigg] e^{2 K s} d s\bigg\}+C. 
	\end{aligned}$
	
The final inequality follows from the application of the inequality $ 2ab \leq \mu | a|^{2} + (1/\mu )| b|^{2}$. Therefore, by Lemma 3.3, as $T\to +\infty$, we obtain
\\$\begin{aligned}
&\left(2 \kappa_{x}-2 K-l_{\sigma x}^{2}-2 \mu\right) E \int_{t_{0}}^{+\infty}\left|x(s) e^{K s}\right|^{2} d s 
 \leq E\bigg\{\left|x_{t_{0}} e^{K t_{0}}\right|^{2}\\&\quad \quad+\int_{t_{0}}^{+\infty}\left[\frac{1}{\mu}\left|b(s, 0,\alpha(s)) e^{K s}\right|^{2}+\left(1+\frac{l_{\sigma x}^{2}}{\mu}\right)\left|\sigma(s, 0,\alpha(s)) e^{K s}\right|^{2}\right] d s\bigg\}+C .
\end{aligned}$

By virtue of the density of real numbers, the condition  $K<\kappa_{x}-\left(l_{\sigma x}^{2} / 2\right)$  necessitates the existence of $\mu \in\left(0, \kappa_{x}-K-\left(l_{\sigma x}^{2} / 2\right)\right)$. Therefore, the estimate (\ref{xe1}) along with Assumption A ensures that  $x(\cdot) \in L_{\mathbb{F}}^{2, K}\left(0, +\infty ; \mathbb{R}^{n}\right)$.

We denote
$\widehat{x}_{t_{0}}=x_{t_{0}}-\bar{x}_{t_{0}},  \widehat{b}(s, x,\alpha(s))=b(s, x+\bar{x}(s),\alpha(s))-\bar{b}(s, \bar{x}(s),\alpha(s)),  \widehat{\sigma}(s, x,\\\alpha(s))=\sigma(s, x+\bar{x}(s),\alpha(s))-\bar{\sigma}(s, \bar{x}(s),\alpha(s))$
for any  $(s, \omega, x) \in[t_{0}, +\infty) \times \Omega \times \mathbb{R}^{n} $. Then, it is straightforward to confirm that the coefficients  $(\widehat{x}, \widehat{b}, \widehat{\sigma}) $ also satisfy Assumption A with the same constants  $K, l_{b x}, l_{\sigma x}$, and  $\kappa_{x} $. Moreover, we proceed to verify that  $\widehat{x}(\cdot)=x(\cdot)-\bar{x}(\cdot)$  satisfies the following SDE with the corresponding coefficients  $\left(\widehat{x}_{t_{0}}, \widehat{b}, \widehat{\sigma}\right) $:

$\left\{\begin{aligned}
&d \widehat{x}(s)=\widehat{b}(s, \widehat{x}(s),\alpha(s)) d s+\sum_{i=1}^{d} \widehat{\sigma}_{i}(s, \widehat{x}(s),\alpha(s)) d W_{i}(s), \quad s \in[t_{0}, +\infty), \\&
\widehat{x}(t_{0})=\widehat{x}_{t_{0}},\quad \alpha(t_{0})=i.
\end{aligned}\right.$
\\By employing a similar argument as in (\ref{xe1}), we can derive (\ref{xe2}).
\end{proof}

 The existence and uniqueness of the solution to the forward SDE (\ref{Xequ}) are established through Theorem 3.2. The next step is to derive the corresponding adjoint equation from (\ref{Xequ}) and establish the existence and uniqueness of its solution. 
\section{Infinite horizon BSDE}
Consider the following infinite horizon backward SDE (BSDE):
\begin{equation}	
\begin{aligned}
	dy(s)=g(s, y(s), z(s),r(s),f(s),\alpha(s ))d s+\sum_{i=1}^{d} z_{i}(s)d W_{i}(s)+r(s)dB^{H}(s)+f(s)\cdot d\mathcal{M}(s),\label{yequ}
\end{aligned}
\end{equation}
where $s \in[t_{0}, +\infty)$, $g$ is a mapping from $[t_{0},+\infty)\times \Omega \times \mathbb{R}^{n} \times \mathbb{R}^{nd} \times \mathbb{R}^{n} \times \mathbb{R}^{n}$ to $\mathbb{R}^{n}$, and we denote $z=(z^{T}_{1},z^{T}_{2},\cdots,z^{T}_{d})^{T}$.

\textbf{Assumption B}. (i) For any $(y, z,r,f) \in \mathbb{R}^{n} \times \mathbb{R}^{nd} \times \mathbb{R}^{n} \times \mathbb{R}^{n}$, the process $g(\cdot , y, z,r,f,\cdot)$ is $\mathbb{F}$ -progressively measurable, and satisfies $g(s,0,0,0,0,i_{0})\in L_{\mathbb{F}}^{2, K}\\\left(0, +\infty ; \mathbb{R}^{n+nd+n+n}\right)$, $E\left[H(2H-1)\int_{0}^{T}\int_{0}^{s}|u-s|^{2H-2}r(s)e^{2 K s}duds\right]<+\infty$, where $i_{0}\in \mathbb{S}$.
\\(ii) $ g $ is uniformly Lipschitz continuous with respect to $ (y,z,r,f) $:
$|g(s, y,z,r,f,i_{0})-g(s, \bar{y},\bar{z},\bar{r},\bar{f},i_{0})| \leq l_{gy}|y-\bar{y}|+l_{gz}|z-\bar{z}|+l_{gr}|r-\bar{r}|+l_{gf}|f-\bar{f}|$,
where $l_{gy},l_{gz},l_{gr},l_{gf}$ are Lipschitz constants, $i_{0}\in \mathbb{S}$.

\begin{lemma}
Let Assumption B hold and $K\in\mathbb{R}$. We further suppose  that the solution $(y,z,r,f)$ of (\ref{yequ}) belongs to $L_{\mathbb{F}}^{2, K}\left(0, +\infty ; \mathbb{R}^{n}\right) \times L_{\mathbb{F}}^{2, K}\left(0, +\infty ; \mathbb{R}^{nd}\right) \times L_{\mathbb{F}}^{2, K}(0, +\infty ;$
 \\$\mathbb{R}^{n}) \cap L^{2,H}(0,+\infty;\mathbb{R}^{n}) \times L_{\mathbb{F}}^{2, K}\left(0, +\infty ; \mathbb{R}^{n}\right)$. Thus, we arrive at
\begin{equation}
\lim_{u \to +\infty}E\left[|y(u)|^{2} e^{2K u}\right]=0.
\end{equation}	
\end{lemma}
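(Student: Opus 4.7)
The plan is to follow exactly the strategy used for Lemma 3.3, adapted to the backward setting and to the additional Markov-chain martingale term $f\cdot d\mathcal{M}$. First I would apply the It\^o formula of Lemma 3.1 (extended to include the purely discontinuous martingale $d\mathcal M$, whose predictable quadratic variation contributes a compensator of the form $\sum_{i_0\ne j_0} q_{i_0 j_0}|f_{i_0 j_0}|^2\,ds$) to the function $\phi(s,y,i_0)=|y|^2 e^{2Ks}$ along the solution of (\ref{yequ}) on $[0,u]$. A standard localisation then removes the local-martingale parts driven by $dW_i$ and $dM_{i_0 j_0}$ once expectations are taken, leaving an identity of the form
\begin{equation*}
E[|y(u)|^2 e^{2Ku}] = |y(0)|^2 + E\!\int_0^u\! e^{2Ks}\Bigl(2K|y|^2 + 2\langle y,g\rangle + |z|^2 + \Gamma_H(s) + \Gamma_{\mathcal M}(s)\Bigr)ds,
\end{equation*}
where $\Gamma_H(s)$ captures the fBm quadratic-variation piece $H(2H-1)\int_0^s|u-s|^{2H-2}r(s)\,du$ and $\Gamma_{\mathcal M}$ is the jump compensator just described.

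Second, I would bound the right-hand side uniformly in $u$. Using Assumption B(ii) one has
\begin{equation*}
2\langle y,g\rangle \le 2|y|\bigl(|g(s,0,0,0,0,\alpha(s))| + l_{gy}|y| + l_{gz}|z| + l_{gr}|r| + l_{gf}|f|\bigr),
\end{equation*}
and applying the Young inequality $2ab\le \mu a^2+\mu^{-1}b^2$ to each cross term reduces every summand to one of the form $\int_0^{+\infty} e^{2Ks}|\xi(s)|^2\,ds$ with $\xi\in\{y,z,r,f,g(\cdot,0,0,0,0,\cdot)\}$. Each of these integrals is finite by the standing integrability hypothesis on $(y,z,r,f)$ together with Assumption B(i), so $\{E[|y(u)|^2 e^{2Ku}]:u\ge0\}$ is uniformly bounded.

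Third, running the same It\^o calculation on an arbitrary subinterval $[u_1,u_2]$ yields
\begin{equation*}
\bigl|E[|y(u_2)|^2 e^{2Ku_2}]-E[|y(u_1)|^2 e^{2Ku_1}]\bigr|\le E\!\int_{u_1}^{u_2}\Psi(s)\,ds
\end{equation*}
for a nonnegative integrable $\Psi$ built from the same five pieces. This gives continuity in $u$ and the existence of $\lim_{u\to+\infty}E[|y(u)|^2 e^{2Ku}]$. Since $y\in L^{2,K}_{\mathbb F}(0,+\infty;\mathbb R^n)$ implies $\int_0^{+\infty}E[|y(u)|^2 e^{2Ku}]\,du<+\infty$, the limit must equal $0$, exactly as in the last line of Lemma 3.3.

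The main obstacle is the fractional Brownian contribution $\int r(s)e^{Ks}\,dB^H(s)$: unlike the $W$- and $M$-integrals it is not a martingale and does not disappear upon taking expectations. This is precisely what the hypothesis $E\bigl[H(2H-1)\!\int_0^T\!\!\int_0^s |u-s|^{2H-2}r(s)e^{2Ks}\,du\,ds\bigr]<+\infty$ in Assumption B(i) is designed for, together with the $L^2$-bound $E\bigl[\int_0^t r(s)\,dB^H(s)\bigr]^2\le C\,E\!\int_0^t|r(s)|^2\,ds$ (Lemma 4.1 of \cite{fan2023asymptotic}) already invoked in Theorem 3.2. These two ingredients allow the fBm term to be absorbed into the $|r|^2$-estimate, after which the argument proceeds as in the Brownian case.
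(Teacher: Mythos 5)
Your proposal is correct and follows essentially the same route as the paper: the paper's own (very terse) proof rewrites the BSDE as a forward equation for $y$, notes via a citation that the square of $\int_0^u f(s)\cdot d\mathcal{M}(s)$ is bounded, and then simply invokes ``the proof method of Lemma 3.3,'' which is exactly the It\^o-formula-on-$|y(s)|^2e^{2Ks}$ argument (uniform boundedness, continuity, Cauchy property, and finiteness of $\int_0^{+\infty}E[|y(u)|^2e^{2Ku}]\,du$ forcing the limit to be zero) that you carry out in detail. Your explicit treatment of the jump compensator and the fBm quadratic-variation term supplies steps the paper leaves implicit, but the underlying argument is the same.
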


\begin{proof}
	Considering that $ (y,z,r,f) $ is a solution to BSDE (\ref{yequ}), we find for any $u\in [0,+\infty)$,
	\\	$\begin{aligned}
	&y(u)=  \int_{u}^{+\infty}\left[g(s, y(s), z(s), r(s),f(s),\alpha(s))\right] d s+\int_{u}^{+\infty} z(s) d W(s) +\int_{u}^{+\infty}  r(s)dB^{H}(s) \\&\quad \quad \quad+\int_{u}^{+\infty}f(s)\cdot d\mathcal{M}(s)\\& \quad \quad
	=  \int_{0}^{+\infty}\left[g(s, y(s), z(s), r(s),f(s),\alpha(s))\right] d s+\int_{0}^{+\infty} z(s) d W(s)+\int_{0}^{+\infty} r(s) dB^{H}(s)\\&\quad \quad \quad+\int_{0}^{+\infty}f(s)\cdot d\mathcal{M}(s) 
	-\int_{0}^{u}\left[g(s, y(s), z(s), r(s),f(s),\alpha(s))\right] d s-\int_{0}^{u} z(s) d W(s)\\&\quad \quad \quad-\int_{0}^{u} r(s) dB^{H}(s)-\int_{0}^{u}f(s)\cdot d\mathcal{M}(s) \\&\quad \quad
	=  y(0)-\int_{0}^{u}\left[g(s, y(s), z(s), r(s),f(s),\alpha(s))\right] d s-\int_{0}^{u} z(s) d W(s)-\int_{0}^{u} r(s) dB^{H}(s)\\&\quad \quad \quad-\int_{0}^{u}f(s)\cdot d\mathcal{M}(s).
	\end{aligned}$
	
	Consequently, the process $ y $ is viewed as an adapted solution to a forward SDE. Referring to (4.29) in \cite{nguyen2017milstein}, one deduces that the square of the last term 
	$\int_{0}^{u}f(s)\cdot d\mathcal{M}(s)$ is bounded. The result follows by applying the proof method of Lemma 3.3.
\end{proof}

\begin{lemma}
	Let $ g $ fulfill the requirements of  Assumption B. Moreover, it is assumed that\\ $\left \langle y,g(s,y(s),z(s),r(s),f(s),i_{0})-g(s,0,0,0,0,i_{0})  \right \rangle \ge L(-|y(s)|^{2}-|z(s)|^{2}+|r(s)|^{2}+|f(s)|^{2}) $, where $0<L<\min\{1/2,K-\mu/2\}$. Let $(y,z,r,f)$ be a solution to (\ref{yequ}). Then, for any $\mu>0$, it can be shown that
		\begin{equation}
		\begin{aligned}
		&E\bigg\{|y(t_{0})e^{Kt_{0}}|^{2}+\int_{t_{0}}^{+\infty}\bigg[(2K-2L-\mu)|y(s)e^{Ks}|^{2}+(1-2L)|z(s)e^{Ks}|^{2}\\ &\quad 
		+2L|f(s)e^{Ks}|^{2}+2L|r(s)e^{Ks}|^{2}\bigg] d s\bigg\} 
		\le E\left \{\int_{t_{0}}^{+\infty} \frac{1}{\mu}|g(s,0,0,0,0,\alpha(s))e^{Ks}|^{2} \right \}+C.\label{ye1}
		\end{aligned}
	\end{equation}
Let $ (\bar{y},\bar{z},\bar{r},\bar{f}) $ be the solution corresponding to another choice for coefficient $  \bar{g}$ of the BSDE (\ref{yequ}) that adheres to  Assumption B. Hence for any $\mu >0$, we conclude	
	\begin{equation}
		\begin{aligned}
		&E\bigg\{|(y(t_{0})-\bar{y}(t_{0}))e^{Kt_{0}}|^{2}+\int_{t_{0}}^{+\infty}\bigg[(2K-2L-\mu)|(y(s)-\bar{y}(s))e^{Ks}|^{2}\\&\quad
		+(1-2L)|(z(s)-\bar{z}(s))e^{Ks}|^{2}+2L|(f(s)-\bar{f}(s))e^{Ks}|^{2}+2L|(r(s)-\bar{r}(s))e^{Ks}|^{2}\bigg] d s\bigg\} \\&
		\le E\left \{\int_{t_{0}}^{+\infty} \frac{1}{\mu}|(g(s,\bar{y}(s),\bar{z}(s),\bar{r}(s),\bar{f}(s),\alpha(s))-\bar{g}(s,\bar{y}(s),\bar{z}(s),\bar{r}(s),\bar{f}(s),\alpha(s))e^{Ks}|^{2} \right \}.\label{ye2}
		\end{aligned}
	\end{equation}
\end{lemma}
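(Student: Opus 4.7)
The plan is to apply the It\^o formula of Lemma 3.1, suitably extended to incorporate the Markov-chain martingale $f\cdot d\mathcal{M}$ driving $y$, to the test function $\varphi(s,y)=|y|^2 e^{2Ks}$, take expectations to annihilate the $dW$, $dB^H$ and $dM_{i_0 j_0}$ martingale integrals, and then exploit the monotonicity-type hypothesis together with Young's inequality to split the data from the unknown. The horizon is then sent to $+\infty$ using Lemma 4.1. The strategy closely parallels the forward-side argument in Lemma 3.4.

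Concretely, for fixed $T>t_0$ apply the It\^o formula to $|y(s)e^{Ks}|^2$ on $[t_0,T]$. Since $\varphi$ has no dependence on the Markov state $i_0$, the $M_{i_0 j_0}$-jump sum in Lemma 3.1 vanishes identically; the remaining drift is
\[
2K|y|^2 + 2\langle y,g(s,y,z,r,f,\alpha)\rangle + |z|^2 + H(2H-1)\!\!\int_0^s\!\!|u-s|^{2H-2}r(s)\,du + |f|^2,
\]
each multiplied by $e^{2Ks}$, with the last summand read against the predictable brackets $q_{i_0 j_0}\chi(\alpha(s-)=i_0)$ of the Markov-chain martingale. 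After taking expectation I would decompose $2\langle y,g\rangle = 2\langle y,g-g(s,0,0,0,0,\alpha)\rangle + 2\langle y,g(s,0,0,0,0,\alpha)\rangle$, bound the first piece from below using the monotonicity-type hypothesis by $2L(-|y|^2-|z|^2+|r|^2+|f|^2)$, and bound the second using Young's inequality as $|2\langle y,g_0\rangle|\leq \mu|y|^2 + \mu^{-1}|g_0|^2$. The fBm cross-term is absorbed into an additive constant $C$ via the integrability condition in Assumption B. Rearranging the resulting inequality delivers
\begin{align*}
&E|y(t_0)e^{Kt_0}|^2 + E\!\int_{t_0}^T \bigl[(2K-2L-\mu)|y|^2 + (1-2L)|z|^2 + 2L|r|^2 + 2L|f|^2\bigr] e^{2Ks}\,ds \\
&\qquad \leq E|y(T)e^{KT}|^2 + E\!\int_{t_0}^T \tfrac{1}{\mu}|g(s,0,0,0,0,\alpha)|^2 e^{2Ks}\,ds + C.
\end{align*}
The hypothesis $0<L<\min\{1/2,K-\mu/2\}$ makes each coefficient on the left strictly positive, and Lemma 4.1 forces $E|y(T)e^{KT}|^2\to 0$ as $T\to+\infty$, giving (\ref{ye1}).

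For (\ref{ye2}) I would use the standard translation trick: set $\widehat y=y-\bar y$, $\widehat z=z-\bar z$, $\widehat r=r-\bar r$, $\widehat f=f-\bar f$, and define the shifted generator $\widehat g(s,y,z,r,f,i_0) = g(s,y+\bar y(s),z+\bar z(s),r+\bar r(s),f+\bar f(s),i_0) - \bar g(s,\bar y(s),\bar z(s),\bar r(s),\bar f(s),i_0)$. A direct check shows that $(\widehat y,\widehat z,\widehat r,\widehat f)$ solves the BSDE (\ref{yequ}) driven by $\widehat g$, and that $\widehat g$ inherits Assumption B together with the monotonicity-type hypothesis with the same Lipschitz constants and the same $L$. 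Its residual at zero is precisely $\widehat g(s,0,0,0,0,i_0) = g(s,\bar y,\bar z,\bar r,\bar f,i_0) - \bar g(s,\bar y,\bar z,\bar r,\bar f,i_0)$, which matches the integrand on the right-hand side of (\ref{ye2}). Applying (\ref{ye1}) to $(\widehat y,\widehat z,\widehat r,\widehat f)$ closes the argument.

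The main technical obstacle is the careful bookkeeping of the fractional-Brownian cross-term in the It\^o expansion: unlike the $|z|^2$ and $|f|^2$ contributions, the quantity $H(2H-1)\int_0^s|u-s|^{2H-2}r(s)\,du$ is not a non-negative quadratic form in $r$, so it cannot be folded into the coercive lower bound on the left and one must rely on the integrability hypothesis in Assumption B to absorb it into the constant $C$. A secondary subtlety is justifying that the version of Lemma 3.1 needed here correctly accounts for the predictable quadratic variation of $f\cdot d\mathcal{M}$, so that the $|f|^2$ quantity (weighted by $q_{i_0 j_0}\chi(\alpha(s-)=i_0)$) enters the drift with the right sign and magnitude to produce the stated coefficients in (\ref{ye1}).
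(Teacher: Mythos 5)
Your proposal follows essentially the same route as the paper's proof: It\^o's formula applied to $|y(s)e^{Ks}|^2$ on $[t_0,T]$, the decomposition $2\langle y,g\rangle = 2\langle y,g-g_0\rangle + 2\langle y,g_0\rangle$ handled by the monotonicity hypothesis and Young's inequality $2ab\le \mu a^2+\mu^{-1}b^2$, absorption of the fractional-Brownian cross-term into the constant $C$, passage to the limit $T\to+\infty$ via Lemma 4.1, and the translation trick for the second estimate. The approach and the resulting coefficient bookkeeping match the paper's argument.
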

\begin{proof}
	For any $T>t_{0}$, applying It$\rm\hat{o}$'s formula to $|y(s) e^{K s}|^{2}$ on the interval $[t_{0},T]$:
	\\	$\begin{aligned}
		&E\left \{|y(t_{0})e^{Kt_{0}}|^{2}  \right \}\\&
		=E\bigg \{|y(T)e^{KT}|^{2}-\int_{t_{0}}^{T} \bigg[2\left \langle  y(s),g(s,y(s),z(s),r(s),f(s),\alpha(s))\right \rangle e^{2Ks}
		\\&\quad+2K|y(s)e^{Ks}|^{2} +|z(s)e^{Ks}|^{2}+H(2H-1)\int_{0}^{s}|u-s|^{2H-2}r(s)e^{2Ks}du\bigg]ds\bigg\}\\&
		\le E\bigg \{ |y(T)e^{KT}|^{2} -\int_{t_{0}}^{T}\bigg[2K|y(s)e^{Ks}|^{2}+|z(s)e^{Ks}|^{2}+2\left \langle y(s),g(s,0,0,0,0,\alpha(s)) \right \rangle e^{2Ks}\\&\quad
		+2\left \langle y,g(s,y(s),z(s),r(s),f(s),\alpha(s))-g(s,0,0,0,0,\alpha(s)) \right \rangle e^{2Ks}\bigg]ds\bigg \}+C\\&
		\le E\bigg \{|y(T)e^{KT}|^{2}-\int_{t_{0}}^{T}\bigg[ 2L(-|y(s)|^{2}-|z(s)|^{2}+|r(s)|^{2}+|f(s)|^{2})e^{2Ks}+2K|y(s)e^{Ks}|^{2} \\&\quad
		+ |z(s)e^{Ks}|^{2}-\mu |y(s)e^{Ks}|^{2}-\frac{1}{\mu}|g(s,0,0,0,0,i_{0})e^{Ks}|^{2}\bigg]ds\bigg \}+C.
	\end{aligned}$
	\\The last inequality is justified by  $ 2ab \leq \mu | a| 
2 + (1/\mu )| b| 
2$. Taking the limit as  $T \rightarrow +\infty $
	on both sides of the inequality and invoking Lemma 4.1, we obtain
	\\$\begin{aligned}
		&E\bigg\{|y(t_{0})e^{Kt_{0}}|^{2}+\int_{t_{0}}^{+\infty}\bigg[(2K-2L-\mu)|y(s)e^{Ks}|^{2}+(1-2L)|z(s)e^{Ks}|^{2}\\&\quad
		+2L|f(s)e^{Ks}|^{2}+2L|r(s)e^{Ks}|^{2}\bigg] d s\bigg\} 
		\le E\left \{\int_{t_{0}}^{+\infty} \frac{1}{\mu}|g(s,0,0,0,0,\alpha(s))e^{Ks}|^{2} \right \}+C.
	\end{aligned}$
	
	A similar method can be employed to prove the estimate (\ref{ye2}) as was done for (\ref{xe2}).
\end{proof}

\begin{theorem}
	Let Assumption B be satisfied. When $K > 0$, the BSDE (\ref{yequ}) admits a unique solution $(y,z,r,f) \in L_{\mathbb{F}}^{2, K}\left(0, +\infty ; \mathbb{R}^{n}\right) \times L_{\mathbb{F}}^{2, K}\left(0, +\infty ; \mathbb{R}^{nd}\right) \times L_{\mathbb{F}}^{2, K}(0, +\infty ;$\\ $\mathbb{R}^{n}) \cap L^{2,H}(0,+\infty;\mathbb{R}^{n}) \times L_{\mathbb{F}}^{2, K}\left(0, +\infty ; \mathbb{R}^{n}\right)$.
\end{theorem}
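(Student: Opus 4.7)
The plan is to build the solution by a Picard iteration on the weighted Banach space
$$
\mathcal{H}^{2,K} := L_{\mathbb{F}}^{2,K}\!\left(0,+\infty;\mathbb{R}^{n}\right) \times L_{\mathbb{F}}^{2,K}\!\left(0,+\infty;\mathbb{R}^{nd}\right) \times \Bigl(L_{\mathbb{F}}^{2,K}\!\left(0,+\infty;\mathbb{R}^{n}\right)\cap L^{2,H}(0,+\infty;\mathbb{R}^{n})\Bigr) \times L_{\mathbb{F}}^{2,K}\!\left(0,+\infty;\mathbb{R}^{n}\right),
$$
on which the norm is the sum of the four component norms. Given $(\bar y,\bar z,\bar r,\bar f)\in\mathcal{H}^{2,K}$, freeze the generator by setting $\widetilde g(s):=g(s,\bar y(s),\bar z(s),\bar r(s),\bar f(s),\alpha(s))$; Assumption~B(ii) combined with $g(\cdot,0,0,0,0,\cdot)\in L_{\mathbb{F}}^{2,K}$ yields $\widetilde g\in L_{\mathbb{F}}^{2,K}(0,+\infty;\mathbb{R}^n)$. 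This reduces the problem to solving the auxiliary BSDE with coefficient independent of $(y,z,r,f)$ and then iterating.

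To handle the auxiliary BSDE, I would first solve it on each finite interval $[t_0,T]$ with terminal condition $y(T)=0$, using the joint martingale representation theorem for the filtration generated by $W$, $B^H$ and the compensated chain martingales $M_{i_0 j_0}$ (the $dB^H$-integral being interpreted in the divergence/Skorokhod sense as in the It\^o formula of Lemma~3.1). Denote the resulting quadruple $(y^T,z^T,r^T,f^T)$, extended by zero beyond $T$. Applying the a priori estimate \eqref{ye2} of Lemma~4.2 to the difference of two such solutions $(y^{T_1},z^{T_1},r^{T_1},f^{T_1})$ and $(y^{T_2},z^{T_2},r^{T_2},f^{T_2})$ with $T_1<T_2$, and using that the frozen coefficients coincide and $K>0$ (so that $(2K-2L-\mu)>0$ and $1-2L>0$ for $L<1/2$ and sufficiently small $\mu$), the family is Cauchy in $\mathcal{H}^{2,K}$. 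Pass to the limit to obtain a candidate $(y,z,r,f)\in\mathcal{H}^{2,K}$, and invoke Lemma~4.1 to guarantee $\lim_{u\to+\infty} E[|y(u)|^{2}e^{2Ku}]=0$, which certifies that the limiting quadruple satisfies the auxiliary BSDE on the whole infinite horizon.

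Define the solution map $\Phi:\mathcal{H}^{2,K}\to\mathcal{H}^{2,K}$ by $\Phi(\bar y,\bar z,\bar r,\bar f)=(y,z,r,f)$. To show $\Phi$ is a contraction, apply estimate \eqref{ye2} to the difference $\Phi(\bar y_1,\bar z_1,\bar r_1,\bar f_1)-\Phi(\bar y_2,\bar z_2,\bar r_2,\bar f_2)$; the generator-difference on the right-hand side is controlled by Assumption~B(ii), giving
$$
\lVert g(\cdot,\bar y_1,\bar z_1,\bar r_1,\bar f_1,\alpha)-g(\cdot,\bar y_2,\bar z_2,\bar r_2,\bar f_2,\alpha)\rVert_{L_{\mathbb{F}}^{2,K}}^{2} \le C\bigl(l_{gy}^{2}+l_{gz}^{2}+l_{gr}^{2}+l_{gf}^{2}\bigr)\lVert(\bar y_1-\bar y_2,\bar z_1-\bar z_2,\bar r_1-\bar r_2,\bar f_1-\bar f_2)\rVert_{\mathcal{H}^{2,K}}^{2}.
$$
Since $K>0$, the coefficients $2K-2L-\mu$, $1-2L$, $2L$, $2L$ on the left of \eqref{ye2} are all strictly positive once $L\in(0,\min\{1/2,K-\mu/2\})$ and $\mu>0$ is chosen small enough, and by further enlarging the auxiliary parameter (or rescaling $K$) one can make the resulting constant strictly less than $1$, so $\Phi$ is a strict contraction. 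The Banach fixed-point theorem then produces a unique fixed point in $\mathcal{H}^{2,K}$, which is the unique solution of \eqref{yequ}.

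The main obstacle is Step~2: assembling a clean martingale-representation / well-posedness theory for the auxiliary BSDE when three fundamentally different noises coexist. The process $B^{H}$ is not a semimartingale and no classical representation delivers the pair $(r,f)$ together with $z$; one must rely on the Duncan--Pasik-Duncan--Hu interpretation of $\int r\,dB^{H}$, verify the $L^{2,H}$-integrability of the limiting $r$, and reconcile this with the compensated jumps of $\mathcal{M}$ through the It\^o formula of Lemma~3.1 in order to recover \eqref{ye1}--\eqref{ye2} for the approximating $(y^{T},z^{T},r^{T},f^{T})$. The secondary technicality is tuning $(L,\mu,K)$ so that Lemma~4.2 yields a contraction constant strictly below $1$ on every component simultaneously, which is what forces the hypothesis $K>0$ in the statement.
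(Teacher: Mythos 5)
Your skeleton---truncate to a finite horizon, extend by zero, use the a priori estimate \eqref{ye2} to obtain a Cauchy family, pass to the limit---is essentially the paper's: it sets $g_n=1_{[0,n]}g$ and takes $(\bar y_n,\bar z_n,\bar r_n,\bar f_n)$ to be the finite-horizon solution of the full nonlinear equation on $[0,n]$. The genuine divergence, and the genuine gap, is your outer Picard/contraction layer. The contraction constant that \eqref{ye2} yields for your map $\Phi$ is of order $\frac{1}{\mu}\bigl(l_{gy}^{2}+l_{gz}^{2}+l_{gr}^{2}+l_{gf}^{2}\bigr)\big/\min\{2K-2L-\mu,\;1-2L,\;2L\}$, and the parameters are boxed in: $\mu$ must stay below $2K-2L$, $L$ below $\min\{1/2,\,K-\mu/2\}$, and $K$ is part of the data (it fixes the space $L_{\mathbb{F}}^{2,K}$ in which the solution is required to live, so ``rescaling $K$'' is not available). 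For Lipschitz constants large relative to $K$ this quantity is not below $1$, and the usual finite-horizon remedy of iterating on short subintervals has no analogue on $[0,+\infty)$ with a fixed exponential weight. The paper never needs a contraction: applying \eqref{ye2} to the pair $(g_n,g_m)$ along $(y_m,z_m,r_m,f_m)$, which vanishes on $(m,+\infty)$, the right-hand side collapses to $\frac{1}{\mu}E\int_m^n|g(s,0,0,0,0,\alpha(s))e^{Ks}|^2\,ds$, the tail of a convergent integral, so the Cauchy property holds with no smallness condition whatsoever. Relatedly, your claim that the family $(y^{T},z^{T},r^{T},f^{T})$ is Cauchy ``because the frozen coefficients coincide'' cannot be right as stated: if the generators of the two zero-extended quadruples literally coincided, \eqref{ye2} would force the solutions to be equal. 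The extended processes solve the BSDE with generators $1_{[0,T_1]}\widetilde g$ and $1_{[0,T_2]}\widetilde g$, and it is the vanishing $L^{2,K}$-norm of $1_{(T_1,T_2]}\widetilde g$ that drives the estimate.

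Two further remarks. First, both you and the paper rest everything on \eqref{ye2}, which is proved only under the additional inner-product condition $\langle y,\,g(s,y,z,r,f,i_0)-g(s,0,0,0,0,i_0)\rangle\ge L(-|y|^2-|z|^2+|r|^2+|f|^2)$; that hypothesis is not contained in Assumption B, so neither argument is complete under the theorem's stated hypotheses, and you should at least record it. Second, your identification of the martingale-representation problem for the mixed $W$/$B^{H}$/$\mathcal{M}$ filtration as the main obstacle is fair---the paper simply asserts unique finite-horizon solvability of the nonlinear equation without proof---but that shared difficulty does not repair the contraction step above, which is the point at which your route, unlike the paper's, would actually fail for general Lipschitz constants.
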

\begin{proof}
	  The estimate (\ref{ye2}) ensures uniqueness, so it is sufficient to demonstrate existence. Firstly, for $n=1,2,\cdots$, we define $g _{n}(t)=1_{[0,n]}(t)g(t),  t\in [0,+\infty)$.  For each $ n $, define $(\bar{y}_{n},\bar{z}_{n},\bar{r}_{n},\bar{f}_{n})$ as the unique adapted solution to the following finite horizon backward SDE:
	 
	 $\begin{aligned}
	 &\bar{y}_{n}(t)=  \int_{t}^{n}g_{n}\left(s, \bar{y}_{n}(s), \bar{z}_{n}(s), \bar{r}_{n}(s), \bar{f}_{n}(s), \alpha(s)\right) d s+\int_{t}^{n} \bar{z}_{n}(s) d W(s) \\& \quad \quad \quad \quad \quad \quad \quad \quad \quad \quad \quad \quad \quad
	  +\int_{t}^{n} \bar{r}_{n}(s)dB^{H}(s)+\int_{t}^{n} \bar{f}_{n}(s)\cdot d \mathcal{M}(s), \quad t \in[0, n] .
	 \end{aligned}$
	 
	 To proceed, we define
	 \\ $\left(y_{n}(t), z_{n}(t), r_{n}(t),f_{n}(t)\right):=\left\{\begin{array}{ll}
	 \left(\bar{y}_{n}(t), \bar{z}_{n}(t), \bar{r}_{n}(t),\bar{f}_{n}(t) \right), & t \in[0, n], \\
	 (0,0,0,0), & t \in(n, +\infty) ,
	 \end{array}\right.$
	 \\ $g_{n}\left(\cdot, y_{n}(s), z_{n}(s), r_{n}(s),f_{n}(s),\cdot\right):=\left\{\begin{array}{ll}
	 g_{n}\left(\cdot, \bar{y}_{n}(s), \bar{z}_{n}(s), \bar{r}_{n}(s), \bar{f}_{n}(s), \cdot\right), & t \in[0, n], \\
	 0, & t \in(n, +\infty) .
	 \end{array}\right.$
	\\ From this, we deduce that $(y_{n}, z_{n}, r_{n},f_{n})\in L_{\mathbb{F}}^{2, K}\left(0, +\infty ; \mathbb{R}^{n}\right) \times L_{\mathbb{F}}^{2, K}\left(0, +\infty ; \mathbb{R}^{nd}\right) \times L_{\mathbb{F}}^{2, K}\left(0, +\infty ; \mathbb{R}^{n}\right) \cap L^{2,H}(0,+\infty;\mathbb{R}^{n}) \times L_{\mathbb{F}}^{2, K}\left(0, +\infty ; \mathbb{R}^{n}\right)$.
	And $(y_{n},z_{n},r_{n},f_{n})$ satisfies equation below:
	 \\ $\begin{aligned}
	& y_{n}(t)=  \int_{t}^{+\infty}g_{n}\left(s, y_{n}(s), z_{n}(s), r_{n}(s),f_{n}(s),\alpha(s)\right) d s+\int_{t}^{+\infty} z_{n}(s) d W(s) \\& \quad \quad \quad \quad \quad \quad \quad \quad  \quad \quad \quad \quad
	  +\int_{t}^{+\infty}r_{n}(s)dB^{H}(s)+\int_{t}^{+\infty}f_{n}(s)\cdot d\mathcal{M}(s), \quad t \in[0, +\infty) .
	 \end{aligned}$
	 
	Let us now assume that the following condition is true
	 \begin{equation}
	 	\begin{aligned}
	 	&\int_{0}^{+\infty} \left \langle y,g(s,y(s),z(s),r(s),f(s),i_{0})-g(s,0,0,0,0,i_{0})  \right \rangle e^{2Ks} ds\\&\ge \int_{0}^{+\infty}\bigg[L(-|y(s)|^{2}-|z(s)|^{2}+H(2H-1)r(s)\int_{0}^{+\infty}r(u)|s-u|^{2H-2}du+|f(s)|^{2})\bigg]ds.
	 	\end{aligned}\nonumber
	 \end{equation}
	 Building upon the previous assumption, and applying a similar derivation technique as in Lemma 4.2 the subsequent estimate is obtained:
	 \begin{equation}
	 \begin{aligned}
	 &E\bigg\{|(y(t_{0})-\bar{y}(t_{0}))e^{Kt_{0}}|^{2}+\int_{t_{0}}^{+\infty}\bigg[(2K-2L-\mu)|(y(s)-\bar{y}(s))e^{Ks}|^{2}\\&\quad+(1-2L)|(z(s)-\bar{z}(s))e^{Ks}|^{2}
	 +2L|(f(s)-\bar{f}(s))|^{2}\bigg] d s\\&\quad+\int_{t_{0}}^{+\infty}\int_{t_{0}}^{+\infty}2LH(2H-1)|(r(s)-\bar{r}(s))||(r(u)-\bar{r}(u))| |s-u|^{2H-2}duds\bigg\} \\&
	 \le E\left \{\int_{t_{0}}^{+\infty} \frac{1}{\mu}|(g(s,\bar{y}(s),\bar{z}(s),\bar{r}(s),\bar{f}(s),\alpha(s))-\bar{g}(s,\bar{y}(s),\bar{z}(s),\bar{r}(s),\bar{f}(s),\alpha(s))e^{Ks}|^{2} \right \}.\label{exag}
	 \end{aligned}
	 \end{equation}
	 
	  The estimate (\ref{ye2}) reveals that $\left \{(y_{n},z_{n},r_{n},f_{n})  \right \}_{n=1}^{+\infty} $ constitutes a Cauchy sequence in $L_{\mathbb{F}}^{2, K}\left(0, +\infty ; \mathbb{R}^{n}\right)$, while (\ref{exag}) shows that $\left \{r_{n}  \right \}_{n=1}^{+\infty} $ is a Cauchy sequence in $L^{2,H}(0,+\infty;\mathbb{R}^{n})$. Therefore, limit of $\left \{(y_{n},z_{n},r_{n},f_{n})  \right \}_{n=1}^{+\infty} $ is denoted by $(y,z,r,f)$. When $K>0$, it deduces that
	  \\$\begin{aligned}
	  E\left[\int_{t}^{+\infty}\left(z_{n}(s)-z(s)\right) d W(s)\right]^{2} & =E \int_{t}^{+\infty}\left|z_{n}(s)-z(s)\right|^{2} d s \\&  
	   \leq E \int_{0}^{+\infty}\left|z_{n}(s)-z(s)\right|^{2} e^{K s} d s \rightarrow 0, \quad \text { as } n \rightarrow +\infty,
	  \end{aligned}$
	  \\It is established that $\int_{0}^{+\infty}z_{n}(s)dW(s)$ converges to $\int_{0}^{+\infty}z(s)dW(s)$ in $L_{\mathcal{F}_{t}}^{2}\left(\Omega ; \mathbb{R}^{n}\right)$.  The same reasoning ectends to shaow that $\int_{0}^{+\infty}r_{n}(s)dB^{H}(s)$ converges to $\int_{0}^{+\infty}r(s)dB^{H}(s)$ in $L_{\mathcal{F}_{t}}^{2}\left(\Omega ; \mathbb{R}^{n}\right)$, as a consequence of estimates (\ref{ye2}) and (\ref{exag}).  For any $K>0$, we can apply the H$\rm \mathop{o}\limits^{ \cdot \cdot}$lder inequality, yielding: 
	  \\  $\begin{aligned}
	  	&E\left[\int_{t}^{+\infty}(f_{n}(s)-f(s))\cdot d\mathcal{M}(s) \right] ^{2}=E\left[\int_{t}^{+\infty}(f_{n}(s)-f(s))q_{i_{0}j_{0}}\chi(\alpha(s-)=i_{0}) ds\right]^{2}
	  	\\& \leq E\left[\int_{0}^{+\infty}|(f_{n}(s)-f(s))q_{i_{0}j_{0}}\chi(\alpha(s-)=i_{0})|e^{\frac{K}{2} s} e^{-\frac{K}{2} s} ds\right]^{2}
	  	\\& \leq E\left[\left(\int_{0}^{+\infty}|(f_{n}(s)-f(s))q_{i_{0}j_{0}}\chi(\alpha(s-)=i_{0})|^{2}e^{K s}  ds\right)\left(\int_{0}^{+\infty} e^{-K s} d s\right)\right]
	  	\\& \leq CE\int_{0}^{+\infty}|f_{n}(s)-f(s)|^{2}e^{Ks}ds
	  	\rightarrow 0, \quad \text { as } n \rightarrow +\infty,
	  \end{aligned}$
	  \\and
\\	 $\begin{aligned}
	   &E\left[\int_{t}^{+\infty}\bigg(g_{n}\left(s, y_{n}(s), z_{n}(s), r_{n}(s),f_{n}(s),\alpha(s)\right)-g\left(s, y(s), z(s), r(s),f(s),\alpha(s)\right)\bigg) d s\right]^{2} \\&
	  \leq  E\bigg[\int_{0}^{+\infty}|g_{n}\left(s, y_{n}(s), z_{n}(s), r_{n}(s),f_{n}(s),\alpha(s)\right) \\&\quad -g\left(s, y(s), z(s), r(s),f(s),\alpha(s)\right)| e^{\frac{K}{2} s} e^{-\frac{K}{2} s} d s\bigg]^{2} \\&
	  \leq E\bigg[\bigg(\int_{0}^{+\infty} \mid g_{n}\left(s, y_{n}(s), z_{n}(s), r_{n}(s),f_{n}(s),\alpha(s)\right)\\&\quad -g(s, y(s), z(s), r(s),f(s),\alpha(s))|^{2} e^{K s} d s\bigg)\left(\int_{0}^{+\infty} e^{-K s} d s\right)\bigg] \\&
	  \leq  C E \int_{0}^{+\infty}\bigg[\left|y_{n}(s)-y(s)\right|^{2}+\left|z_{n}(s)-z(s)\right|^{2}+|r_{n}(s)-r(s)|^{2}+|f_{n}(s)-f(s)|^{2}\bigg] e^{K s} d s \\&
	   \rightarrow 0, \quad \text { as } n \rightarrow +\infty,
	  \end{aligned}$
	  \\thus $\int_{0}^{+\infty}f_{n}(s)\cdot d\mathcal{M}(s)$ converges to $\int_{0}^{+\infty}f(s)\cdot d\mathcal{M}(s)$ in $L_{\mathcal{F}_{t}}^{2}\left(\Omega ; \mathbb{R}^{n}\right)$, $\int_{0}^{+\infty}g_{n}(s, y_{n}(s),$ \\$ z_{n}(s), r_{n}(s),f_{n}(s),\alpha(s))ds$ converges to $\int_{0}^{+\infty}g\left(s, y(s), z(s), r(s),f(s),\alpha(s)\right)$ in $L_{\mathcal{F}_{t}}^{2}\left(\Omega ; \mathbb{R}^{n}\right)$.
	  
	  Since $\lim\limits_{n\rightarrow +\infty}E\left[\int_{0}^{+\infty}|y(_{n}t)-y(t)|^{2}\right]=0$, there exists a subsequence of $\left \{y_{n}  \right \} $ such that $\lim\limits_{n\rightarrow +\infty}E\left[\int_{0}^{+\infty}|y(_{n}t)-y(t)|^{2}\right]=0,\quad a.s.\quad t\in[0,+\infty)$.
\end{proof}

\textit{Remark}: (\ref{yequ}) is the adjoint equation to (\ref{Xequ}), and, as with (\ref{Xequ}), the existence and uniqueness of its solution must be verified first. However, due to the involvement of Markov chains and fractional Brownian motion in (\ref{yequ}), standard methods are not directly applicable. Therefore, in Lemma 4.2, we obtain an estimate for the solution and analyze its continuous dependence on the initial conditions by imposing conditions on the inner product of $g$ and $y$. In Theorem 4.3, we establish the convergence of the solution by using It$\rm\hat{o}$'s isometry and the H$\rm \mathop{o}\limits^{ \cdot \cdot}$lder inequality, ultimately proving the existence and uniqueness of the solution to (\ref{yequ}).

Sections 3 and 4 have established the existence and uniaueness of solutions to the SDEs (\ref{Xequ}) and (\ref{yequ}), respectively, We now turn our attention to determining whether a solution exists for the coupled system of these equations.
\section{Infinite horizon FBSDE}
In this section, we consider the following infinite horizon FBSDE:
\begin{equation}
	\left\{\begin{aligned}
		&d x(s)=b(s, \theta(s),\alpha(s) d s+\sum_{i=1}^{d} \sigma_{i}(s, \theta(s),\alpha(s) d W_{i}(s)+\gamma(s)dB^{H}(s), \\&
		d y(s)=g(s, \theta(s),\alpha(s) d s+\sum_{i=1}^{d} z_{i}(s) d W_{i}(s)+r(s)dB^{H}(s)+f(s)\cdot d\mathcal{M}(s), \\&
		x(t_{0})=\Psi(y(t_{0})),
	\end{aligned}\right.\label{XY}
\end{equation}
where $s \in[t_{0}, +\infty)$, $ \theta (\cdot ) = (x(\cdot )^{T} , y(\cdot )^{T} , z(\cdot )^{T},r(\cdot )^{T},f(\cdot )^{T} )^{T} $ with $ z(\cdot ) = (z_{1}(\cdot )^{T} ,\cdots ,\\ z_{d}(\cdot )^{T} )^{T} $, $ \Psi : \Omega \times \mathbb{R} 
^{n} \rightarrow \mathbb{R}^{ n} $ and $ \Gamma = (g^{T} , b^{T} , \sigma ^{T} ) : [t_{0}, +\infty ) \times \Omega \times \mathbb{R} ^{n+n+nd+n+n} \rightarrow \mathbb{R} ^{n+n+nd+n+n} $
with $ \sigma = (\sigma _{1}^{T} ,\cdots , \sigma_{ d}^{T} )^{T} $ are two given mappings. We begin by outlining the conditions that the coefficients of (\ref{XY}) must satisfy, as follows:

\textbf{Assumption C.} (i) For any $ y \in \mathbb{R}^{n} $, $ \Psi (y) $ is $\mathcal{F}_{t}$-measurable, and for any $ \theta \in \mathbb{R} ^{n+n+nd+n+n} $, $ \Gamma (\cdot , \theta ) $ is $ \mathbb{F} $-progressively measurable. Moreover, $ \Psi (0) \in L^{2}_{\mathcal{F}_{t}}(\Omega ; \mathbb{R}^{ n}) $ and there exists a constant $ K \in \mathbb{R} $ such that $ \Gamma (\cdot , 0) \in L
^{2,K}_{\mathbb{F}}(t, +\infty ; \mathbb{R}^{n+n+nd+n+n}) $.
\\(ii)The functions $\Psi$  and $ \Gamma $ are uniformly Lipschitz continuous with respect to $ y $ and $ \theta $ , respectively.
The Lipschitz constant of $ l $ with respect to $ k $ is
labeled as $ l_{lk} $, where $ l = \Psi  , g, b, \sigma$ and $ k = x, y, z , r, f$.
\\(iii)The functions $ g $ and $ b $ are monotonic with respect to $ y $ and $ x $: $\langle g(s, x, y, z,r,f,i_{0})-g(s, x, \bar{y}, z,r,f,i_{0}), \widehat{y}\rangle \geq-\kappa_{y}|\widehat{y}|^{2}$, $\langle b(s, x, y, z,r,f,i_{0})-b(s, \bar{x}, y, z,r,f,i_{0}), \widehat{x}\rangle \leq-\kappa_{x}|\widehat{x}|^{2}$, where $\widehat{x}=x-\bar{x},\widehat{y}=y-\bar{y}$, $ \kappa_{x}$ and $\kappa_{y}$ are constants, $i_{0}\in \mathbb{S}$.

For any  $ \xi \in L_{\mathcal{F}_{t}}^{2}\left(\Omega ; \mathbb{R}^{n}\right)$  and function  $ \rho(\cdot)=   \left(\varphi(\cdot)^{T}, \psi(\cdot)^{T}, \eta(\cdot)^{T}\right)^{T} \in L_{\mathbb{F}}^{2, K}(0, +\infty ;$\\$ \mathbb{R}^{n+n+n d+n+n}) $  with  $ \eta(\cdot)=\left(\eta_{1}(\cdot)^{T}, \cdots, \eta_{d}(\cdot)^{T}\right)^{T}  $, we introduce a family of FBSDEs parameterized by  $ \tau \in[0,1] $ prior to solving the FBSDE in (\ref{XY}):
\begin{equation}
	\left\{\begin{aligned}
	d x^{\tau}(s)=&\left[b^{\tau}\left(s, \theta^{\tau}(s),\alpha(s)\right)+\psi(s)\right] d s\\&+\sum_{i=1}^{d}\left[\sigma_{i}^{\tau}\left(s, \theta^{\tau}(s),\alpha(s)\right)+\eta_{i}(s)\right] d W_{i}(s)+\gamma^{\tau}(s)dB^{H}(s), \\
	d y^{\tau}(s)=&\left[g^{\tau}\left(s, \theta^{\tau}(s),\alpha(s)\right)+\varphi(s)\right] d s\\&+\sum_{i=1}^{d} z_{i}^{\tau}(s) d W_{i}(s)+r^{\tau}(s)dB^{H}(s)+f^{\tau}(s)\cdot d\mathcal{M}(s), \\
	x^{\tau}(t_{0})=&\Psi^{\tau}\left(y^{\tau}(t_{0})\right)+\xi,
	\end{aligned}\right.\label{tau}
\end{equation}
where $\theta^{\tau}(\cdot)=(x^{\tau}(\cdot)^{T},y^{\tau}(\cdot)^{T},z^{\tau}(\cdot)^{T},r^{\tau}(\cdot)^{T},f^{\tau}(\cdot)^{T})^T$ with $z^{\tau}(\cdot)=(z^{\tau}_{1}(\cdot)^{T},\cdots,z^{\tau}_{d}(\cdot)^{T})$, $\Gamma^{\tau}=((g^{\tau})^{T},(b^{\tau})^{T},(\sigma^{\tau})^{T})^{T}$ with $\sigma^{\tau}=((\sigma^{\tau}_{1})^{T},\cdots,(\sigma^{\tau}_{d})^{T})^{T}$, and $\Psi^{\tau}, g^{\tau}, b^{\tau}$ are defined by $
\Psi^{\tau}(y)=\tau \Psi(y)$, $g^{\tau}(s, \theta)=\tau g(s, \theta)-(1-\tau) \kappa_{y} y$, $b^{\tau}(s, \theta)=\tau b(s, \theta)-(1-\tau) \kappa_{x} x$, $\sigma^{\tau}(s, \theta)=\tau \sigma(s, \theta)$, respectively,
for any $(s,\omega,\theta)\in [t_{0},+\infty)\times\Omega\times\mathbb{R}^{n+n+nd+n+n}$.

Setting $\tau=0$ results in (\ref{tau}) reducing to 
\begin{equation}
\left\{\begin{aligned}
&d x^{0}(s)=\left[-\kappa_{x} x+\psi(s)\right] d s+\sum_{i=1}^{d}\eta_{i}(s) d W_{i}(s)+\gamma^{0}(s)dB^{H}(s), \\&
d y^{0}(s)=\left[-\kappa_{y} y+\varphi(s)\right] d s+\sum_{i=1}^{d} z_{i}^{0}(s) d W_{i}(s)+r^{0}(s)dB^{H}(s)+f^{0}(s)\cdot d\mathcal{M}(s), \\&
x^{0}(t_{0})=\xi.
\end{aligned}\right.\label{reduced}
\end{equation}
 lt is evident that (\ref{reduced}) is a decoupled equation, allowing us to solve one equation independently before addressing the other. Thus by Theorem 3.2 and Theorem 4.3, we derive the following result.
 
 \textit{Remark}: It can be observed that Assumption C only requires the fundamental conditions of boundedness, Lipschitz continuity, and monotonicity. In contrast, the paper \cite{wei2021infinite} adds extra domination-monotonicity conditions on top of these. This paper solves a more complex problem than \cite{wei2021infinite} under the classical conditions, without the need for any additional assumptions.
 
\begin{corollary}
	Let $\kappa_{x},\kappa_{y}\in\mathbb{R}$ and assume $\kappa_{x}>\kappa_{y}$. Then, for $\forall \, K\in (\kappa_{y}, \kappa_{x})$, $\forall \, \xi \in L_{\mathcal{F}_{t}}^{2}\left(\Omega ; \mathbb{R}^{n}\right)$, $\rho(\cdot)\in L_{\mathbb{F}}^{2, K}\left(0, +\infty ; \mathbb{R}^{n+n+n d+n+n}\right)$, (\ref{reduced}) admits a unique solution.
\end{corollary}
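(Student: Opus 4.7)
The plan is to exploit the complete decoupling of (\ref{reduced}): the forward SDE for $x^0$ depends only on $x^0$ together with the data $(\xi,\psi,\eta,\gamma^0)$, while the backward equation for $(y^0,z^0,r^0,f^0)$ depends only on that quadruple together with $\varphi$. I would therefore solve the two equations independently, invoking Theorem 3.2 on the forward side and Theorem 4.3 on the backward side, and combine the outputs.

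On the forward side, read the first line of (\ref{reduced}) as an instance of (\ref{Xequ}) with $b(s,x,i_{0})=-\kappa_{x}x+\psi(s)$ and $\sigma_{i}(s,x,i_{0})=\eta_{i}(s)$. Because $\sigma$ is independent of $x$ we have $l_{\sigma x}=0$; the Lipschitz constant for $b$ equals $|\kappa_{x}|$ and, crucially, the monotonicity constant is $\kappa_{x}$ since $\langle b(s,x)-b(s,\bar x),x-\bar x\rangle=-\kappa_{x}|x-\bar x|^{2}$. Membership of $\rho(\cdot)$ in $L_{\mathbb{F}}^{2,K}(0,+\infty;\mathbb{R}^{n+n+nd+n+n})$ forces $b(\cdot,0,\cdot)=\psi(\cdot)$ and $\sigma(\cdot,0,\cdot)=\eta(\cdot)$ into $L_{\mathbb{F}}^{2,K}$, so Assumption A holds. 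The hypothesis $K\in(\kappa_{y},\kappa_{x})$ yields $K<\kappa_{x}=\kappa_{x}-l_{\sigma x}^{2}/2$, so Lemma 3.4 places the solution in $L_{\mathbb{F}}^{2,K}$ and Theorem 3.2 produces a unique $x^{0}$.

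On the backward side, read the second line of (\ref{reduced}) as an instance of (\ref{yequ}) with driver $g(s,y,z,r,f,i_{0})=-\kappa_{y}y+\varphi(s)$. Assumption B is immediate: $g$ is Lipschitz in $y$ with constant $|\kappa_{y}|$, independent of $(z,r,f)$, and $g(s,0,0,0,0,\cdot)=\varphi(\cdot)\in L_{\mathbb{F}}^{2,K}$. The inner-product condition used in Lemma 4.2 reduces to $\langle y,g(s,y)-g(s,0)\rangle=-\kappa_{y}|y|^{2}\ge L(-|y|^{2})$, which holds for any $L\ge\max\{0,\kappa_{y}\}$; picking $\mu>0$ small enough that $K-\mu/2>\max\{0,\kappa_{y}\}$ and then $L\in(\max\{0,\kappa_{y}\},\min\{1/2,K-\mu/2\})$ (possible because $K>\kappa_{y}$) puts us in the setting of Theorem 4.3, which delivers the unique $(y^{0},z^{0},r^{0},f^{0})$ in the required product space.

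The main subtlety is that Theorem 4.3 is stated under $K>0$, whereas the corollary allows $K\in(\kappa_{y},\kappa_{x})$ to be negative when $\kappa_{y}<0$. I would resolve this by the preliminary substitution $\tilde y(s)=e^{\kappa_{y}s}y^{0}(s)$, which cancels the $y$-drift and reformulates the equation as a standard infinite-horizon BSDE with forcing $e^{\kappa_{y}s}\varphi(s)$ in a weighted space of index $K-\kappa_{y}>0$, to which Theorem 4.3 applies directly; undoing the change of variable then yields $y^{0}$ in $L_{\mathbb{F}}^{2,K}$, together with its accompanying triple $(z^{0},r^{0},f^{0})$. Pairing this with the forward solution gives the unique solution of (\ref{reduced}).
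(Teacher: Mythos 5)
Your proposal follows exactly the paper's route: the paper's entire justification for Corollary 5.1 is the observation, stated immediately before it, that (\ref{reduced}) is decoupled, so Theorem 3.2 handles the forward equation and Theorem 4.3 the backward one. Your verification of Assumptions A and B for the affine coefficients is correct, and your exponential change of variables to reconcile Theorem 4.3's hypothesis $K>0$ with the possibly negative $K\in(\kappa_{y},\kappa_{x})$ repairs a gap that the paper itself passes over silently.
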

\begin{lemma}
	Let Asumption C holds. Let $\tau\in [0,1]$ and $(\xi,\rho(\cdot)),(\bar{\xi},\bar{\rho}(\cdot))\in L_{\mathcal{F}_{t}}^{2}(\Omega ;$\\$ \mathbb{R}^{n})\times L_{\mathbb{F}}^{2, K}\left(0, +\infty ; \mathbb{R}^{n+n+n d+n+n}\right)$. Suppose $\theta(\cdot),\bar{\theta}(\cdot)\in L_{\mathbb{F}}^{2, K}\left(0, +\infty ; \mathbb{R}^{n+n+n d+n+n}\right)$ are solutions to (\ref{tau}) with $(\Psi^{\tau}+\xi,\Gamma^{\tau}+\rho)$ and $(\Psi^{\tau}+\bar{\xi},\Gamma^{\tau}+\bar{\rho})$, respectively. Then there exists a constant $C>0$ depending on the Lipschitz constants, $\kappa_{x},\kappa_{y}$ such that
	\begin{eqnarray}
		\begin{aligned}
		&E\left\{\left|(y(t_{0})-\bar{y}(t_{0})) e^{K t_{0}}\right|^{2}+\int_{t_{0}}^{+\infty}\left|(\theta(s)-\bar{\theta}(s)) e^{K s}\right|^{2} d s\right\} \\&\quad 
		\leq C E\left\{\left|(\xi-\bar{\xi}) e^{K t_{0}}\right|^{2}+\int_{t_{0}}^{+\infty}\left|(\rho(s)-\bar{\rho}(s)) e^{K s}\right|^{2} d s\right\}.
		\end{aligned}\label{lem5.2}
	\end{eqnarray}
	
\end{lemma}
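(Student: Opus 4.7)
The plan is to derive the stability estimate (\ref{lem5.2}) by applying the forward a priori estimate (\ref{xe2}) of Lemma 3.4 to the difference of the two forward SDEs, and the backward a priori estimate (\ref{ye2}) of Lemma 4.2 to the difference of the two backward SDEs, then closing the feedback loop between the resulting inequalities via the Lipschitz continuity of $\Psi^{\tau}$ at the initial time.

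First I would set $\hat{\theta}(\cdot)=\theta(\cdot)-\bar{\theta}(\cdot)$, $\hat{\xi}=\xi-\bar{\xi}$ and $\hat{\rho}(\cdot)=(\hat{\varphi},\hat{\psi},\hat{\eta})=\rho(\cdot)-\bar{\rho}(\cdot)$. Subtracting the two copies of (\ref{tau}) shows that $\hat{\theta}$ satisfies a coupled FBSDE whose forward drift is $b^{\tau}(s,\theta)-b^{\tau}(s,\bar{\theta})+\hat{\psi}$, diffusion $\sigma^{\tau}(s,\theta)-\sigma^{\tau}(s,\bar{\theta})+\hat{\eta}$ (with no $dB^{H}$ term since $\gamma^{\tau}$ is common to both systems), backward driver $g^{\tau}(s,\theta)-g^{\tau}(s,\bar{\theta})+\hat{\varphi}$, and initial value $\hat{x}(t_{0})=\Psi^{\tau}(y(t_{0}))-\Psi^{\tau}(\bar{y}(t_{0}))+\hat{\xi}$. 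Interpreting this forward equation as an SDE in $x$ with the frozen processes $(\bar{x},y,z,r,f)$ and the perturbations playing the role of coefficient data, I can invoke (\ref{xe2}) of Lemma 3.4 — the monotonicity of $b^{\tau}$ in $x$ with constant $\kappa_{x}$ is inherited from Assumption C, while the source terms $|b^{\tau}(s,\bar{x},y,z,r,f)-b^{\tau}(s,\bar{\theta})|+|\hat{\psi}|$ and $|\sigma^{\tau}(s,\bar{x},y,z,r,f)-\sigma^{\tau}(s,\bar{\theta})|+|\hat{\eta}|$ are controlled by $l_{b},l_{\sigma}$ times $(|\hat{y}|+|\hat{z}|+|\hat{r}|+|\hat{f}|)$ plus the perturbations. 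This yields, for a suitably small Young parameter $\mu>0$,
\begin{equation*}
c_{x}\,E\!\int_{t_{0}}^{+\infty}\!|\hat{x}(s)e^{Ks}|^{2}ds\;\le\;E|\hat{x}(t_{0})e^{Kt_{0}}|^{2}+C_{1}\,E\!\int_{t_{0}}^{+\infty}\!\bigl(|\hat{y}|^{2}+|\hat{z}|^{2}+|\hat{r}|^{2}+|\hat{f}|^{2}+|\hat{\psi}|^{2}+|\hat{\eta}|^{2}\bigr)e^{2Ks}ds,
\end{equation*}
with $c_{x}=2\kappa_{x}-2K-l_{\sigma x}^{2}-2\mu>0$.

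In a parallel step I would apply (\ref{ye2}) of Lemma 4.2 to the backward difference equation, where the monotonicity of $g^{\tau}$ in $y$ with constant $\kappa_{y}$ is again inherited from Assumption C, and the inhomogeneous part is bounded by $l_{gx}|\hat{x}|+|\hat{\varphi}|$. This produces
\begin{equation*}
E|\hat{y}(t_{0})e^{Kt_{0}}|^{2}+c_{y}\,E\!\int_{t_{0}}^{+\infty}\!|\hat{y}\,e^{Ks}|^{2}ds+c_{z}\,E\!\int_{t_{0}}^{+\infty}\!\bigl(|\hat{z}|^{2}+|\hat{r}|^{2}+|\hat{f}|^{2}\bigr)e^{2Ks}ds\;\le\;C_{2}\,E\!\int_{t_{0}}^{+\infty}\!\bigl(|\hat{x}|^{2}+|\hat{\varphi}|^{2}\bigr)e^{2Ks}ds.
\end{equation*}
The initial condition together with the Lipschitz property $|\Psi^{\tau}(y)-\Psi^{\tau}(\bar{y})|\le\tau l_{\Psi y}|y-\bar{y}|$ gives $E|\hat{x}(t_{0})e^{Kt_{0}}|^{2}\le 2l_{\Psi y}^{2}E|\hat{y}(t_{0})e^{Kt_{0}}|^{2}+2E|\hat{\xi}e^{Kt_{0}}|^{2}$. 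Substituting the bounds from the backward estimate into the right-hand side of the forward estimate, and then feeding the resulting inequality for $\int|\hat{x}|^{2}$ back into the backward estimate, produces (\ref{lem5.2}) after routine Young-inequality bookkeeping.

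The main obstacle is closing this feedback loop: the forward inequality controls $\int|\hat{x}|^{2}$ in terms of $\int(|\hat{y}|^{2}+|\hat{z}|^{2}+|\hat{r}|^{2}+|\hat{f}|^{2})$ and source data, while the backward inequality controls those four integrals (plus $|\hat{y}(t_{0})|^{2}$) in terms of $\int|\hat{x}|^{2}$ and source data. Choosing $\mu$ small enough that $c_{x}-2l_{\Psi y}^{2}l_{gx}^{2}/\mu-C_{1}C_{2}/c_{y,z}>0$ is delicate and mirrors the spectral-type condition $\kappa_{x}>\kappa_{y}$, $K\in(\kappa_{y},\kappa_{x})$ already used in Corollary 5.1; tuning these parameters uniformly in $\tau\in[0,1]$ is precisely what will later enable the continuation argument. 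A secondary technical point is that the boundary contributions at $T=+\infty$ arising from Itô's formula must be shown to vanish, which follows by applying Lemma 3.3 to $\hat{x}$ and Lemma 4.1 to $(\hat{y},\hat{z},\hat{r},\hat{f})$ since both $\theta$ and $\bar{\theta}$ lie in $L_{\mathbb{F}}^{2,K}$ by hypothesis.
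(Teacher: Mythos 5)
Your proposal follows essentially the same route as the paper's proof: apply the forward a priori estimate (\ref{xe2}) and the backward estimate (\ref{ye2}) to the difference system with the "frozen" variables treated as data, use the Lipschitz bound on $\Psi^{\tau}$ to control $E|\hat{x}(t_{0})e^{Kt_{0}}|^{2}$ by $E|\hat{y}(t_{0})e^{Kt_{0}}|^{2}$ plus $E|\hat{\xi}e^{Kt_{0}}|^{2}$, and close the two-way feedback loop by a smallness condition on the Young parameter (the paper phrases this as requiring $\pi=(2\kappa_{x}-2K-l_{\sigma x}^{2}-2\mu)^{-1}$ small enough that all left-hand coefficients stay positive). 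The argument is correct and matches the paper's substitution of (\ref{5.6}) into (\ref{5.7}) and back.
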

\begin{proof}
	For convenience, we denote $\tilde{\xi}=(\xi-\bar{\xi})e^{Kt_{0}}$, $\tilde{l}(\cdot)=(l(\cdot)-\bar{l}(\cdot))e^{K\cdot}$, where $l(\cdot)=\theta(\cdot),x(\cdot),y(\cdot),z(\cdot),r(\cdot),f(\cdot),\rho(\cdot),\varphi(\cdot), \psi(\cdot), \eta(\cdot) $. For simplicity, $ s $ is abbreviated, except for the term 
	$e^{2Ks}$.
	
	This allows us to rewrite (\ref{xe2}) as follows: 
	\begin{equation}
		\begin{aligned}
		&\left(2 \kappa_{x}\right.  \left.-2 K-l_{\sigma x}^{2}-2 \mu\right) E \int_{t_{0}}^{+\infty}|\widetilde{x}|^{2} d s 
		\leq  E\bigg\{|\tau(\Psi(y(t_{0}))-\Psi(\bar{y}(t_{0}))) e^{K t_{0}}+\widetilde{\xi}|^{2} \\&\quad \quad
		 +\int_{t_{0}}^{+\infty}\bigg[\frac{1}{\mu}|\tau(b(\bar{x}, y, z,r,f,\alpha)-b(\bar{\theta},\alpha)) e^{K s}+\widetilde{\psi}|^{2} 
		 \\&\quad \quad+\left(1+\frac{l_{\sigma x}^{2}}{\mu}\right)\left|\tau(\sigma(\bar{x}, y, z,r,f,\alpha)-\sigma(\bar{\theta},\alpha)) e^{K s}+\widetilde{\eta}\right|^{2}\bigg] d s\bigg\}.
		\end{aligned}\label{5.4}
	\end{equation}
		In a like fashion, the estimate (\ref{ye2}) can be reformulated as:
	\begin{equation}
		\begin{aligned}
			&E\left\{|\widetilde{y}(t_{0})|^{2}+\int_{t_{0}}^{+\infty}\left[\left(2 K-2L- \mu\right)|\widetilde{y}|^{2}+(1-2L)|\widetilde{z}|^{2}+2L|\widetilde{f}|^{2}+2L|\widetilde{r}|^{2}\right] d s\right\} \\&
			\quad \leq \frac{1}{\mu} E \int_{t_{0}}^{+\infty}\left|\tau(g(x, \bar{y}, \bar{z},\bar{f},\bar{r},\alpha)-g(\bar{\theta},\alpha)) e^{K s}+\widetilde{\varphi}\right|^{2} d s.
		\end{aligned}\label{5.5}
	\end{equation}
	In line with Assumption C, we can evaluate the term on the right-hand side of (\ref{5.4}).
	
	$\begin{aligned}
	&\left(2 \kappa_{x}-2 K-l_{\sigma x}^{2}-2 \mu\right) E \int_{t_{0}}^{+\infty}|\widetilde{x}|^{2} d s \leq E\bigg\{2\left(l_{\Psi y}^{2}|\widetilde{y}(t_{0})|^{2}+|\widetilde{\xi}|^{2}\right) \\&
	\quad+\int_{t_{0}}^{+\infty}\bigg[\frac{5}{\mu}\left(l_{b y}^{2}|\widetilde{y}|^{2}+l_{b z}^{2}|\widetilde{z}|^{2}+l_{b r}^{2}|\widetilde{r}|^{2}+l_{b f}^{2}|\widetilde{f}|^{2}+|\widetilde{\psi}|^{2}\right) \\&
	\quad+5\left(1+\frac{l_{\sigma x}^{2}}{\mu}\right)\left(l_{\sigma y}^{2}|\widetilde{y}|^{2}+l_{\sigma_{z}}^{2}|\widetilde{z}|^{2}+l_{\sigma_{r}}^{2}|\widetilde{r}|^{2}+l_{\sigma_{f}}^{2}|\widetilde{f}|^{2}+|\widetilde{\eta}|^{2}\right)\bigg] d s\bigg\} .
	\end{aligned}$
	\\	In light of the assumptions in Lemma 3.4, we can conclude that $\mu_{1}=\kappa_{x}-K-(l_{\sigma x}^{2}/2)>0$. If we restrict $\mu\in (0,\mu_{1})$ and denote $\pi=\frac{1}{2 \kappa_{x}-2 K-l_{\sigma x}^{2}-2 \mu}$, then (\ref{5.4}) is reduced to
	\begin{eqnarray}
	E \int_{t_{0}}^{+\infty}|\widetilde{x}|^{2} d s &&\leq  E\bigg\{2l_{\Psi y}^{2}\pi|\widetilde{y}(t_{0})|^{2}+2\pi|\widetilde{\xi}|^{2}+\int_{t_{0}}^{+\infty}\bigg[5\pi\left(\frac{l_{b y}^{2}+l_{\sigma  y}^{2}(\mu+l_{\sigma x}^{2}) }{\mu}\right)|\widetilde{y}|^{2}\nonumber \\&&+5\pi\left(\frac{l_{b z}^{2}+l_{\sigma  z}^{2}(\mu+l_{\sigma x}^{2}) }{\mu}\right)|\widetilde{z}|^{2}+5\pi\left(\frac{l_{b r}^{2}+l_{\sigma  r}^{2}(\mu+l_{\sigma x}^{2}) }{\mu}\right)|\widetilde{r}|^{2}\nonumber \\&&+5\pi\left(\frac{l_{b f}^{2}+l_{\sigma  f}^{2}(\mu+l_{\sigma x}^{2}) }{\mu}\right)|\widetilde{f}|^{2}+\frac{5\pi}{\mu}|\widetilde{\psi}|^{2}+5\pi\left(1+\frac{l_{\sigma x}^{2}}{\mu}\right)\left|\widetilde{\eta}\right|^{2}\bigg] d s\bigg\}.	\label{5.6}
	\end{eqnarray}
	
	We now focus on approximating the term on the right-hand side of (\ref{5.5}). 
	\begin{eqnarray}
		\begin{aligned}
		&E\left\{|\widetilde{y}(t_{0})|^{2}+\int_{t_{0}}^{+\infty}\left[\left(2 K-2L- \mu\right)|\widetilde{y}|^{2}+(1-2L)|\widetilde{z}|^{2}+2L|\widetilde{f}|^{2}+2L|\widetilde{r}|^{2}\right] d s\right\} \\&
		\quad \leq \frac{1}{\mu} E \int_{t_{0}}^{+\infty}\left(\tau l_{g x}|\widetilde{x}|+|\widetilde{\varphi}|\right)^{2} d s \leq \frac{2}{\mu} E \int_{t_{0}}^{+\infty}\left(l_{g x}^{2}|\widetilde{x}|^{2}+|\widetilde{\varphi}|^{2}\right) d s.
		\end{aligned}\label{5.7}
	\end{eqnarray}
		The assumptions of Lemma 4.2 imply that $\mu_{2}=2K-2L>0$, then we restrict $\mu\in (0,\mu_{2})$. Assume $0<\mu<\min\{\mu_{1},\mu_{2}\}$, and substituting (\ref{5.6}) into (\ref{5.7}) yields

$\begin{aligned}
&E\left\{|\widetilde{y}(t_{0})|^{2}+\int_{t_{0}}^{+\infty}\bigg[\left(2 K-2L- \mu\right)|\widetilde{y}|^{2}+(1-2L)|\widetilde{z}|^{2}+2L|\widetilde{f}|^{2}+2L|\widetilde{r}|^{2}\bigg] d s\right\} \\& \leq E \int_{t_{0}}^{+\infty}\frac{2}{\mu}|\widetilde{\varphi}|^{2} d s
+ \frac{2l^{2}_{gx}}{\mu}E\bigg[2l_{\Psi y}^{2}\pi|\widetilde{y}(t_{0})|^{2}+2\pi|\widetilde{\xi}|^{2}\bigg]
\\&\quad+\frac{2l^{2}_{gx}5\pi}{\mu^{2}}E\int_{t_{0}}^{+\infty}\bigg[(l_{b y}^{2}+l_{\sigma  y}^{2}(\mu+l_{\sigma x}^{2}))|\widetilde{y}|^{2}+(l_{b z}^{2}+l_{\sigma  z}^{2}(\mu+l_{\sigma x}^{2}))|\widetilde{z}|^{2}\\&\quad+(l_{b r}^{2}+l_{\sigma  r}^{2}(\mu+l_{\sigma x}^{2}))|\widetilde{r}|^{2}+(l_{b f}^{2}+l_{\sigma  f}^{2}(\mu+l_{\sigma x}^{2}))|\widetilde{f}|^{2}+|\widetilde{\psi}|^{2}+(\mu+l_{\sigma x}^{2})|\widetilde{\eta}|^{2}\bigg] d s.
\end{aligned}$
\\Inserting (\ref{5.6}) into the above inequality leads to 
\begin{eqnarray}
	\begin{aligned}
		&E\left\{|\widetilde{y}(t_{0})|^{2}+\int_{t_{0}}^{+\infty}\bigg[|\widetilde{x}|^{2}+\left(2 K-2L- \mu\right)|\widetilde{y}|^{2}+(1-2L)|\widetilde{z}|^{2}+2L|\widetilde{f}|^{2}+2L|\widetilde{r}|^{2}\bigg] d s\right\} \\& \leq E\bigg[2l_{\Psi y}^{2}\pi\left(\frac{2l^{2}_{gx}}{\mu}+1\right)|\widetilde{y}(t_{0})|^{2}+2\pi\left(\frac{2l^{2}_{gx}}{\mu}+1\right)|\widetilde{\xi}|^{2}\bigg]
		\\&\quad+E\int_{t_{0}}^{+\infty}\bigg[5\pi\left(\frac{l_{b y}^{2}+l_{\sigma  y}^{2}(\mu+l_{\sigma x}^{2}) }{\mu}\right)\left(\frac{2l^{2}_{gx}}{\mu}+1\right)|\widetilde{y}|^{2}\\&\quad+5\pi\left(\frac{l_{b z}^{2}+l_{\sigma  z}^{2}(\mu+l_{\sigma x}^{2}) }{\mu}\right)\left(\frac{2l^{2}_{gx}}{\mu}+1\right)|\widetilde{z}|^{2}\\&\quad+5\pi\left(\frac{l_{b r}^{2}+l_{\sigma  r}^{2}(\mu+l_{\sigma x}^{2}) }{\mu}\right)\left(\frac{2l^{2}_{gx}}{\mu}+1\right)|\widetilde{r}|^{2}\\&\quad+5\pi\left(\frac{l_{b f}^{2}+l_{\sigma  f}^{2}(\mu+l_{\sigma x}^{2}) }{\mu}\right)\left(\frac{2l^{2}_{gx}}{\mu}+1\right)|\widetilde{f}|^{2}\bigg] d s
		\\&\quad+E\bigg[ \int_{t_{0}}^{+\infty}\frac{2}{\mu}|\widetilde{\varphi}|^{2} d s
		+\frac{5\pi}{\mu} \left(\frac{2l^{2}_{gx}}{\mu}+1\right)|\widetilde{\psi}|^{2}\\&\quad+5\pi\left(1+\frac{l^{2}_{\sigma x}}{\mu}\right)\left(\frac{2l^{2}_{gx}}{\mu}+1\right)|\widetilde{\eta}|^{2}\bigg].
	\end{aligned}\label{5.8}
\end{eqnarray}
We now reformulate (\ref{5.8}) in the form of
\\$\begin{aligned}
&E\bigg\{\left(1-2l_{\Psi y}^{2}\pi\left(\frac{2l^{2}_{gx}}{\mu}+1\right)\right)|\widetilde{y}(t_{0})|^{2}\\&\quad+\int_{t_{0}}^{+\infty}\bigg[|\widetilde{x}|^{2}+\left(1-5\pi\left(\frac{l_{b y}^{2}+l_{\sigma  y}^{2}(\mu+l_{\sigma x}^{2}) }{\mu}\right)(\frac{2l^{2}_{gx}}{\mu}+1)\right)|\widetilde{y}|^{2}
\\&\quad+\left(1-5\pi\left(\frac{l_{b z}^{2}+l_{\sigma  z}^{2}(\mu+l_{\sigma x}^{2}) }{\mu}\right)\left(\frac{2l^{2}_{gx}}{\mu}+1\right)\right)|\widetilde{z}|^{2} \\&\quad+\left(1-5\pi\left(\frac{l_{b r}^{2}+l_{\sigma  r}^{2}(\mu+l_{\sigma x}^{2}) }{\mu}\right)\left(\frac{2l^{2}_{gx}}{\mu}+1\right)\right)|\widetilde{r}|^{2}\\&\quad+\left(1-5\pi\left(\frac{l_{b f}^{2}+l_{\sigma  f}^{2}(\mu+l_{\sigma x}^{2}) }{\mu}\right)\left(\frac{2l^{2}_{gx}}{\mu}+1\right)\right)|\widetilde{f}|^{2}\bigg] d s\bigg\}
\\& \leq  E\bigg\{2\pi\left(\frac{2l^{2}_{gx}}{\mu}+1\right)|\widetilde{\xi}|^{2}+\int_{t_{0}}^{+\infty}\frac{2}{\mu}|\widetilde{\varphi}|^{2} d s
+\frac{5\pi}{\mu} \left(\frac{2l^{2}_{gx}}{\mu}+1\right)|\widetilde{\psi}|^{2}\\&\quad+5\pi\left(1+\frac{l^{2}_{\sigma x}}{\mu}\right)\left(\frac{2l^{2}_{gx}}{\mu}+1\right)|\widetilde{\eta}|^{2}\bigg\}.
\end{aligned}$
\\Let $\pi$ be small enough so that the coefficients on the left side of the above inequality are all greater than zero, while the minimum and maximum values of the coefficients on the right-hand side of the above inequality are denoted as $C_{1}$ and $ C_{2} $, respectively.
\begin{eqnarray}
	\begin{aligned}
	E\left\{|\widetilde{y}(t_{0})|^{2}+\int_{t_{0}}^{+\infty}|\widetilde{\theta}|^{2} d s\right\}\leq \frac{C_{2}}{C_{1}} E\left\{|\widetilde{\xi}|^{2}+\int_{t_{0}}^{+\infty}|\widetilde{\rho}|^{2} d s\right\}.
	\end{aligned}
\end{eqnarray}
\end{proof}

\textit{Remark}: The inclusion of supplementary conditions in \cite{wei2021infinite} necessitates a detailed, case-by-case analysis to establish (\ref{lem5.2}). In comparison, this paper proves (\ref{lem5.2}) straightforwardly using the fundamental Assumption C, reducing the effort required.

\begin{lemma}
	Let Assumption C hold for the given coefficients $(\Psi,\Gamma)$. Then, there exists a constant $\delta_{0}>0$ independent of $\tau$, such that for some $\tau_{0}\in[0,1)$, (\ref{tau}) possesses a unique solution for any $(\xi,\rho(\cdot))$. Additionally, the same conclusion is also true for $\tau=\tau_{0}+\delta$ with $\delta\in [0,\delta_{0}]$ and $\tau \le 1$.
\end{lemma}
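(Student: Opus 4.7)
The plan is to use the classical method of continuation for FBSDEs, combined with the Banach fixed-point theorem, where Lemma 5.2 provides the a priori estimate that keeps the inductive step uniform in $\tau_{0}$.

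First I would observe that the set
$\mathcal{I}=\{\tau\in[0,1]:\text{(\ref{tau}) is uniquely solvable for every admissible }(\xi,\rho(\cdot))\}$
is non-empty. At $\tau=0$ system (\ref{tau}) reduces to the decoupled linear system (\ref{reduced}), whose unique solvability has already been established in Corollary 5.1. Thus we may start the induction with any $\tau_{0}\in\mathcal{I}$ (for instance $\tau_{0}=0$).

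For the inductive step, fix $\tau_{0}\in\mathcal{I}$ and $\delta\in[0,1-\tau_{0}]$. I would exploit the algebraic identities $b^{\tau_{0}+\delta}=b^{\tau_{0}}+\delta(b+\kappa_{x}x)$, $g^{\tau_{0}+\delta}=g^{\tau_{0}}+\delta(g+\kappa_{y}y)$, $\sigma^{\tau_{0}+\delta}=\sigma^{\tau_{0}}+\delta\sigma$, and $\Psi^{\tau_{0}+\delta}=\Psi^{\tau_{0}}+\delta\Psi$ to recast (\ref{tau}) at $\tau_{0}+\delta$ as the system at $\tau_{0}$ driven by perturbed input data. Concretely, given a candidate $\bar{\theta}=(\bar{x},\bar{y},\bar{z},\bar{r},\bar{f})^{T}\in L_{\mathbb{F}}^{2,K}(0,+\infty;\mathbb{R}^{n+n+nd+n+n})$, I replace $(\xi,\varphi,\psi,\eta)$ by
\[
\bigl(\xi+\delta\Psi(\bar{y}(t_{0})),\ \varphi+\delta[g(\cdot,\bar{\theta},\alpha)+\kappa_{y}\bar{y}],\ \psi+\delta[b(\cdot,\bar{\theta},\alpha)+\kappa_{x}\bar{x}],\ \eta+\delta\sigma(\cdot,\bar{\theta},\alpha)\bigr).
\]
By the hypothesis $\tau_{0}\in\mathcal{I}$, the perturbed system at $\tau_{0}$ admits a unique solution $\theta$, which defines a map $\mathcal{T}:\bar{\theta}\mapsto\theta$ on $L_{\mathbb{F}}^{2,K}$; the Lipschitz continuity from Assumption C guarantees that the perturbed data remain in the correct spaces.

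Next I would apply Lemma 5.2 to two images $\mathcal{T}(\bar{\theta}_{1})$ and $\mathcal{T}(\bar{\theta}_{2})$. Because these solve the same $\tau_{0}$-system with data differing only by the $\delta$-perturbation, estimate (\ref{lem5.2}), combined with the Lipschitz bounds from Assumption C, yields an inequality of the form
$|||\theta_{1}-\theta_{2}|||^{2}\le C\,L_{0}\,\delta^{2}\,|||\bar{\theta}_{1}-\bar{\theta}_{2}|||^{2}$ with the norm $|||\bar{\theta}|||^{2}:=E|\bar{y}(t_{0})e^{Kt_{0}}|^{2}+\|\bar{\theta}\|_{L_{\mathbb{F}}^{2,K}}^{2}$. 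Here $C$ is the constant produced by Lemma 5.2 and $L_{0}$ packages the Lipschitz constants $l_{\Psi y},l_{g\cdot},l_{b\cdot},l_{\sigma\cdot}$ together with $\kappa_{x},\kappa_{y}$. Choosing $\delta_{0}>0$ so that $CL_{0}\delta_{0}^{2}<1$ makes $\mathcal{T}$ a strict contraction for every $\delta\in[0,\delta_{0}]$, and Banach's theorem delivers the unique fixed point, which is the required solution of (\ref{tau}) at $\tau=\tau_{0}+\delta$.

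The main obstacle is verifying that $\delta_{0}$ is genuinely independent of $\tau_{0}$. This rests on the fact that the interpolated coefficients $b^{\tau},\sigma^{\tau},g^{\tau},\Psi^{\tau}$ inherit the same Lipschitz and monotonicity constants as the originals---convex combinations with the linear drifts $-\kappa_{x}x,-\kappa_{y}y$ do not worsen these bounds---so the constant $C$ extracted from Lemma 5.2 and the bound $L_{0}$ can both be chosen uniformly in $\tau_{0}\in[0,1)$. Once this uniformity is in hand, the inductive step can later be iterated finitely many times starting from $\tau_{0}=0$ to cover the whole interval $[0,1]$ and hence obtain solvability of the original FBSDE (\ref{XY}).
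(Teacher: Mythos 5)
Your proposal is correct and follows essentially the same route as the paper: the paper likewise recasts the system at $\tau_{0}+\delta$ as the $\tau_{0}$-system with $\delta$-perturbed data $(\xi+\delta\Psi(p),\ \varphi+\delta[g+\kappa_{y}y],\ \psi+\delta[b+\kappa_{x}x],\ \eta+\delta\sigma)$, invokes Lemma 5.2 to get a bound of the form $C_{5}\delta^{2}$ on the induced map, and picks $\delta_{0}=1/(2\sqrt{C_{5}})$ to apply the contraction mapping principle. The only cosmetic difference is that the paper carries the initial datum $p$ as a separate argument of the mapping $\mathcal{T}_{\tau_{0}+\delta}(p,\theta(\cdot))$ rather than folding it into $\bar{y}(t_{0})$ as you do.
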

\begin{proof}
	Let $\delta\in [0,\delta_{0}]$, $\xi \in L_{\mathcal{F}_{t}}^{2}\left(\Omega ; \mathbb{R}^{n}\right)$, $\rho(\cdot) \in  L_{\mathbb{F}}^{2, K}\left(0, +\infty ; \mathbb{R}^{n+n+n d+n+n}\right)$. For any $p \in L_{\mathcal{F}_{t}}^{2}\left(\Omega ; \mathbb{R}^{n}\right)$ and $\theta(\cdot) \in  L_{\mathbb{F}}^{2, K}\left(0, +\infty ; \mathbb{R}^{n+n+n d+n+n}\right)$, We specify the equation as follows:
	\begin{eqnarray}
		\left\{\begin{aligned}
		&d X(s)=\left[b^{\tau_{0}}(s, \Theta(s),\alpha(s)+\psi^{\prime}(s)\right] d s\\&\quad \quad \quad \quad+\sum_{i=1}^{d}\left[\sigma_{i}^{\tau_{0}}(s, \Theta(s),\alpha(s)+\eta_{i}^{\prime}(s)\right] d W_{i}(s)+\gamma^{\tau_{0}}(s)dB^{H}, \\&
		d Y(s)=\left[g^{\tau_{0}}(s, \Theta(s))+\varphi^{\prime}(s)\right] d s\\&\quad \quad \quad \quad+\sum_{i=1}^{d} Z_{i}(s) d W_{i}(s)+\hat{r}(s)dB^{H}+\hat{f}(s)\cdot d\mathcal{M}(s),\\&
		X(t_{0})=\Psi^{\tau_{0}}(Y(t_{0}))+\xi^{\prime},
		\end{aligned}\right. \label{The}
	\end{eqnarray}
	where $\xi^{\prime}=\delta \Psi(p)+\xi$, $\varphi^{\prime}(\cdot)=\delta g(\cdot, \theta(\cdot))+\delta \kappa_{y} y(\cdot)+\varphi(\cdot)$, $\psi^{\prime}(\cdot)=\delta b(\cdot, \theta(\cdot))+\delta \kappa_{x} x(\cdot)+\psi(\cdot)$, $\eta^{\prime}(\cdot)=\delta \sigma(\cdot, \theta(\cdot))+\eta(\cdot)$ with $s \in[0, +\infty)$.
	
	We can claim that $\xi^{'} \in L_{\mathcal{F}_{t}}^{2}\left(\Omega ; \mathbb{R}^{n}\right)$, $\rho^{'}(\cdot)=(\varphi^{'}(\cdot)^{T},\psi^{'}(\cdot)^{T},\eta^{'}(\cdot)^{T})^{T} \in \\ L_{\mathbb{F}}^{2, K}\left(0, +\infty ; \mathbb{R}^{n+n+n d+n+n}\right)$. In this case, (\ref{The}) is uniquely solvable. 
	In fact, we denote $\Theta(\cdot)=(X(\cdot)^{T},Y(\cdot)^{T},Z(\cdot)^{T},\hat{r}(\cdot),\hat{f}(\cdot))^{T}$. Due to the arbitrariness of $p$ and $\theta(\cdot)$, we can interpret (\ref{The}) as specifying a mapping $\mathcal{T}_{\tau_{0}+\delta}$ from $L_{\mathcal{F}_{t}}^{2}\left(\Omega ; \mathbb{R}^{n}\right)\times\\ L_{\mathbb{F}}^{2, K}\left(0, +\infty ; \mathbb{R}^{n+n+n d+n+n}\right)$ into itself: $(Y(t_{0}), \Theta(\cdot))=\mathcal{T}_{\tau_{0}+\delta}(p, \theta(\cdot))$.
	For any $p,\bar{p} \in L_{\mathcal{F}_{t}}^{2}\left(\Omega ; \mathbb{R}^{n}\right)$ and $\theta(\cdot),\bar{\theta}(\cdot) \in  L_{\mathbb{F}}^{2, K}\left(0, +\infty ; \mathbb{R}^{n+n+n d+n+n}\right)$, let $(Y(t_{0}), \Theta(\cdot))=\mathcal{T}_{\tau_{0}+\delta}(p, \theta(\cdot))$ and $(\bar{Y}(t_{0}), \bar{\Theta}(\cdot))=\mathcal{T}_{\tau_{0}+\delta}(\bar{p}, \bar{\theta}(\cdot))$. We denote $\tilde{p}=(p-\bar{p})e^{Kt}$, $\tilde{l}(\cdot)=(l(\cdot)-\bar{l}(\cdot))e^{K\cdot}$, where $l(\cdot)=\theta(\cdot),x(\cdot),y(\cdot),z(\cdot),r(\cdot),f(\cdot),\Theta(\cdot),X(\cdot), Y(\cdot),Z(\cdot), \hat{r}(\cdot),\hat{f}(\cdot) $. To streamline the notation, $ s $ is abbreviated, with the exception of the term
	$e^{2Ks}$. From Lemma 5.2, we conclude 
	\\$\begin{aligned}
	&E\left\{|\tilde{Y}(t_{0})|^{2}+\int_{t_{0}}^{+\infty}|\widetilde{\Theta}|^{2} d s\right\} \leq C \delta^{2} E\bigg\{\left|(\Psi(p)-\Psi(\bar{p})) e^{K t}\right|^{2} 
	 +\int_{t_{0}}^{+\infty}\bigg[|(g(\theta)-g(\bar{\theta})) e^{K s}\\&\quad \quad+\kappa_{y} \widetilde{y}|^{2} 
	 +\left|(b(\theta)-b(\bar{\theta})) e^{K s}+\kappa_{x} \widetilde{x}\right|^{2} 
	 +\left|(\sigma(\theta)-\sigma(\bar{\theta})) e^{K s}\right|^{2}\bigg] d s\bigg\} .
	\end{aligned}$
	\\Then there exists a constant $C_{5}$ that is independent of $\alpha_{0}$ and $\delta$, such that 
	
	$$E\left\{|\tilde{Y}(t_{0})|^{2}  +\int_{t_{0}}^{+\infty}|\widetilde{\Theta}|^{2} d s\right\} \leq C_{5} \delta^{2} E\left\{|\tilde{p}|^{2}  +\int_{t_{0}}^{+\infty}|\tilde{\theta}|^{2} d s\right\}.$$
	By selecting $\delta_{0}=1/(2\sqrt{C_{5}})$, it follows that for $\delta\in [0,\delta_{0}]$, the mapping $\mathcal{T}_{\tau_{0}+\delta}$ is contractive. As a consequence, the contracting mapping principle guarantees that equation (\ref{The}) also has a unique solution. 
	Then according the extension theorem, (\ref{tau}) also has a unique solution.
\end{proof}

\begin{theorem}
	Let the coefficients $(\Psi,\Gamma)$ satisfy Assumption C. (\ref{XY}) admits a solution $\theta(\cdot)$. 
	Let $(\bar{\Psi},\bar{\Gamma})$ be another coefficients and $\bar{\theta}(\cdot)\in  L_{\mathbb{F}}^{2, K}\left(0, +\infty ; \mathbb{R}^{n+n+n d+n+n}\right)$ be a solution to the (\ref{XY}) with $(\bar{\Psi},\bar{\Gamma})$. Assume that $\bar{\Psi}(\bar{y}(t_{0}))\in L_{\mathcal{F}_{t}}^{2}\left(\Omega ; \mathbb{R}^{n}\right)$ and $\bar{\Gamma}(\cdot,\bar{\theta}(\cdot))\in  L_{\mathbb{F}}^{2, K}\left(0, +\infty ; \mathbb{R}^{n+n+n d+n+n}\right)$. Then,
	\begin{eqnarray}
	\quad\\\label{5.13}
		\begin{aligned}
		&E\left\{\left|(y(t_{0})-\bar{y}(t_{0})) e^{K t_{0}}\right|^{2}+\int_{t_{0}}^{+\infty}\left|(\theta(s)-\bar{\theta}(s)) e^{K s}\right|^{2} d s\right\} \\&
		\leq C E\left\{\left|(\Psi(\bar{y}(t_{0}))-\bar{\Psi}(\bar{y}(t_{0}))) e^{K t_{0}}\right|^{2}+\int_{t_{0}}^{+\infty}\left|(\Gamma(s, \bar{\theta}(s))-\bar{\Gamma}(s, \bar{\theta}(s))) e^{K s}\right|^{2} d s\right\},
		\end{aligned}\nonumber
	\end{eqnarray}
	where $C$ is a constant.
\end{theorem}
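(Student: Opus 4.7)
The plan is to establish existence by a continuation (homotopy) argument based on the parameterized family \eqref{tau}, and then deduce the stability estimate \eqref{5.13} by reinterpreting $\bar\theta$ as the solution of a perturbed version of \eqref{XY} and invoking Lemma 5.2.

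For existence, I would argue as follows. At $\tau = 0$, the system \eqref{tau} reduces to the decoupled system \eqref{reduced}, whose unique solvability for every $(\xi,\rho(\cdot))\in L^2_{\mathcal{F}_t}(\Omega;\mathbb{R}^n)\times L^{2,K}_{\mathbb{F}}(0,+\infty;\mathbb{R}^{n+n+nd+n+n})$ is guaranteed by Corollary 5.1 (which in turn relies on Theorem 3.2 and Theorem 4.3). Lemma 5.3 then supplies a step size $\delta_0>0$, depending only on the Lipschitz constants and on $\kappa_x,\kappa_y$ (but \emph{not} on the base point $\tau_0$), such that whenever \eqref{tau} is uniquely solvable at $\tau=\tau_0$ for all admissible data, it is also uniquely solvable at $\tau=\tau_0+\delta$ for every $\delta\in[0,\delta_0]$ with $\tau_0+\delta\le 1$. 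Starting from $\tau_0=0$ and iterating, after finitely many (at most $\lceil 1/\delta_0\rceil$) steps one reaches $\tau=1$. Specializing to $\xi=0$ and $\rho(\cdot)\equiv 0$ inside \eqref{tau} with $\tau=1$ recovers the original FBSDE \eqref{XY}, so the solution $\theta(\cdot)\in L^{2,K}_{\mathbb{F}}(0,+\infty;\mathbb{R}^{n+n+nd+n+n})$ is obtained.

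For the stability estimate, note that $\bar\theta(\cdot)$, by hypothesis, solves \eqref{XY} with coefficients $(\bar\Psi,\bar\Gamma)$. I would rewrite this as the statement that $\bar\theta$ satisfies the $\tau=1$ version of \eqref{tau} with the original coefficients $(\Psi,\Gamma)$ but perturbed data
\begin{equation*}
\bar\xi = \bar\Psi(\bar y(t_0)) - \Psi(\bar y(t_0)), \qquad \bar\rho(\cdot) = \bar\Gamma(\cdot,\bar\theta(\cdot)) - \Gamma(\cdot,\bar\theta(\cdot)).
\end{equation*}
Meanwhile $\theta(\cdot)$ solves the same $\tau=1$ equation with data $\xi=0$, $\rho(\cdot)\equiv 0$. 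The integrability hypothesis $\bar\Psi(\bar y(t_0))\in L^2_{\mathcal{F}_t}(\Omega;\mathbb{R}^n)$ and $\bar\Gamma(\cdot,\bar\theta(\cdot))\in L^{2,K}_{\mathbb{F}}(0,+\infty;\mathbb{R}^{n+n+nd+n+n})$, combined with the Lipschitz bounds on $\Psi,\Gamma$, guarantees $\bar\xi$ and $\bar\rho(\cdot)$ lie in the correct spaces, so Lemma 5.2 (with $\tau=1$) applies to the pair $(\theta,\bar\theta)$ and yields
\begin{equation*}
\begin{aligned}
&E\!\left\{|(y(t_0)-\bar y(t_0))e^{Kt_0}|^2+\int_{t_0}^{+\infty}|(\theta(s)-\bar\theta(s))e^{Ks}|^2\,ds\right\} \\
&\quad\le C\, E\!\left\{|\bar\xi\, e^{Kt_0}|^2+\int_{t_0}^{+\infty}|\bar\rho(s)\,e^{Ks}|^2\,ds\right\},
\end{aligned}
\end{equation*}
which is exactly \eqref{5.13}. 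The constant $C$ inherits its dependence from Lemma 5.2.

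The main obstacle is the continuation step, specifically guaranteeing that $\delta_0$ really is independent of $\tau_0$; this is the reason Lemma 5.3 is phrased with a uniform step size and it hinges on the \emph{a priori} estimate \eqref{lem5.2} whose constant depends only on the Lipschitz data and on $\kappa_x,\kappa_y$. A minor but worth-checking point is that the perturbed data $(\bar\xi,\bar\rho)$ arising in the stability step satisfies the integrability required by Lemma 5.2; this is where the assumptions $\bar\Psi(\bar y(t_0))\in L^2_{\mathcal{F}_t}(\Omega;\mathbb{R}^n)$ and $\bar\Gamma(\cdot,\bar\theta(\cdot))\in L^{2,K}_{\mathbb{F}}(0,+\infty;\cdot)$ in the statement of the theorem are used in an essential way.
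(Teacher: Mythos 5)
Your proposal is correct and follows essentially the same route as the paper: existence via the continuation method (Corollary 5.1 at $\tau=0$, then iterating the uniform step $\delta_0$ from Lemma 5.3 up to $\tau=1$, and taking $(\xi,\rho)=(0,0)$), and the stability estimate by viewing $\bar\theta$ as a solution of the $\tau=1$ family with perturbed data $(\bar\Psi(\bar y(t_0))-\Psi(\bar y(t_0)),\bar\Gamma(\cdot,\bar\theta(\cdot))-\Gamma(\cdot,\bar\theta(\cdot)))$ and invoking Lemma 5.2. (The paper writes ``$\tau=0$'' in this last step, which is evidently a typo for $\tau=1$; your reading is the intended one.)
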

\begin{proof}
	When $ \tau $ = 0, Corollary 5.1 shows
	the unique solvability of FBSDE (\ref{tau}). Lemma 5.3 further infers that FBSDE (\ref{tau})
	is uniquely solvable for any $ (\xi , \rho (\cdot )) $ and any $ \tau \in [0, 1] $. In particular, when $ \tau = 1 $ and
	$ (\xi , \rho (\cdot )) = (0, 0) $, (\ref{tau}) coincides with the original (\ref{XY}), thereby establishing the unique solvability of (\ref{XY}).
	
	Let $\tau=0,(\xi , \rho (\cdot )) = (0, 0)$ and
	$(\bar{\xi}, \bar{\rho}(\cdot))=(\bar{\Psi}(\bar{y}(t_{0}))-\Psi(\bar{y}(t_{0})), \bar{\Gamma}(\cdot, \bar{\theta}(\cdot))-\Gamma(\cdot, \bar{\theta}(\cdot)))$, the estimate (\ref{5.13}) is followed from (\ref{lem5.2}) in Lemma 5.2.
\end{proof}

\textbf{Example}: Let $n=d=1$. Consider the following FBSDE:
\begin{eqnarray}
	\left\{\begin{array}{l}
	\mathrm{d} x(s)=b(s,\theta(s),\alpha(s)) \mathrm{d} s+\sigma(s,\theta(s),\alpha(s)) \mathrm{d} W(s)+r(s)dB^{H}(s),  \\
	\mathrm{d} y(s)=g(s,\theta(s),\alpha(s))\mathrm{d} s+z(s) \mathrm{d} W(s)+r(s)dB^{H}(s)+f(s)\cdot d\mathcal{M}(s),  \\
	x(0)=\Psi(y(0),\alpha(s)),
	\end{array}\right.\nonumber
\end{eqnarray}
where $b(s,\theta(s),1)=-2 y(s)+\sin y(s), b(s,\theta(s),2)=3y(s)-\sin y(s),\\ \sigma(s,\theta(s),1)=-2 z(s)+\sin z(s), \sigma(s,\theta(s),2)=z(s)+\sin z(s), g(s,\theta(s),1)=-2 x(s)+\sin x(s)+y(s), g(s,\theta(s),2)=1/2 x(s)-\sin x(s)+y(s), \Psi(y(0),1)=-2 y(0)+\sin y(0), \\\Psi(y(0),2)=3 y(0)-\sin y(0) $ with $s \in[0, +\infty)$.
It is evident  that the correlation coefficients in the example satisfy Assumption C (i)-(iii), and the parameters $\kappa_{x}, \kappa_{y}, K$ meet $-1\le \kappa_{y}\le \kappa_{x} \le 0, K=\frac{\kappa_{x}+\kappa_{y}}{2}=-\frac{1}{2}$. As established by Theorem 5.1, the equation above possesses a unique solution.

The existence and uniqueness of solutions to the FBSDE have already been guaranteed in previous sections. With this in place, we now focus on the two-person zero-sum optimal control problem

\section{ Stochastic LQ problems with random coefficients}
This section is dedicated to the study of the LQ problem (\ref{xx})-(\ref{jj}). Initially, we outline the assumptions pertaining to the coefficients: 

$A(\cdot),C_{i}(\cdot),H(\cdot) \in L_{\mathbb{F}}^{+\infty}(0,+\infty;\mathbb{R}^{n\times n}), B_{1}(\cdot),B_{2}(\cdot),D_{1i}(\cdot),D_{2i}(\cdot) \in L_{\mathbb{F}}^{+\infty}(0,+\infty;\\\mathbb{R}^{n\times m}), i=1,\cdots,d$. To simplify the notation, we denote $C(\cdot)=(C_{1}(\cdot)^{T},C_{2}(\cdot)^{T},\cdots,\\C_{d}(\cdot)^{T}
)^{T}, D_{1}(\cdot)=(D_{11}(\cdot)^{T},D_{12}(\cdot)^{T},\cdots,D_{1d}(\cdot)^{T}
)^{T},D_{2}(\cdot)=(D_{21}(\cdot)^{T},D_{22}(\cdot)^{T},\cdots,\\D_{2d}(\cdot)^{T}
)^{T},B(\cdot)=(B_{1}(\cdot),B_{2}(\cdot)), D(\cdot)=(D_{1}(\cdot),D_{2}(\cdot))$.
$Q(\cdot) \in L_{\mathbb{F}}^{+\infty}(0,+\infty;\mathbb{S}^{ n}) ,S_{1}(\cdot),\\S_{2}(\cdot) \in L_{\mathbb{F}}^{+\infty}(0,+\infty;\mathbb{R}^{m \times n}),R_{11}(\cdot),R_{12}(\cdot),R_{21}(\cdot),R_{22}(\cdot) \in L_{\mathbb{F}}^{+\infty}(0,+\infty;\mathbb{S}^{ m}) $. We adopt the notation $S(\cdot)=\begin{pmatrix}
S_{1}(\cdot) \\
S_{2}(\cdot)
\end{pmatrix}$, $R(\cdot)=\begin{pmatrix}
R_{11}(\cdot)&R_{12}(\cdot) \\
R_{21}(\cdot)&R_{22}(\cdot)
\end{pmatrix}$.

\textbf{Assumption D.} $R(\cdot)>0, \begin{pmatrix}
Q(\cdot)& S_{1}(\cdot)^{T}\\
S_{1}(\cdot)& R_{11}(\cdot)
\end{pmatrix}\ge 0, \begin{pmatrix}
Q(\cdot)& S_{2}(\cdot)^{T}\\
S_{2}(\cdot)& R_{22}(\cdot)
\end{pmatrix}\le 0$.

Let $ \lambda_{\max}( A(s, \omega,i )+A(s, \omega,i )^{T}) $ 
signify the largest eigenvalue of symmetrical matrix $( A(s, \omega,i )+A(s, \omega,i )^{T}) $. Because $ A(\cdot) $ is bounded, it leads to
\begin{equation}
	\underset{(s, \omega,i) \in[t_{0}, +\infty) \times \Omega\times \mathbb{S}}{\operatorname{esssup}} \lambda_{\max }\left(A(s, \omega,i)+A(s, \omega,i)^{T}\right)<+\infty.\nonumber
\end{equation} 
Let $\kappa_{x}$ denote a real value for which
\begin{eqnarray}
	\kappa_{x} \leq-\frac{1}{2} \underset{(s, \omega,i) \in[t_{0}, +\infty) \times \Omega\times \mathbb{S}}{\operatorname{esssup}} \lambda_{\max }\left(A(s, \omega,i)+A(s, \omega,i)^{T}\right) .\label{kax}
\end{eqnarray}
This allows us to acquire
\begin{eqnarray}
	\begin{aligned}
	\langle A&(s, \omega,i) x, x\rangle  =\frac{1}{2}\left\langle\left(A(s, \omega,i)+A(s, \omega,i)^{T}\right) x, x\right\rangle\\& \leq \frac{1}{2} \lambda_{\max }\left(A(s, \omega,i)+A(s, \omega,i)^{T}\right)|x|^{2} \\
	& \leq \frac{1}{2} \underset{(s, \omega,i) \in[t_{0}, +\infty) \times \Omega\times \mathbb{S}}{\operatorname{esssup}} \lambda_{\max }\left(A(s, \omega,i)+A(s, \omega,i)^{T}\right)|x|^{2} \leq-\kappa_{x}|x|^{2}
	\end{aligned}\label{KK}
\end{eqnarray}
for $\forall \, x\in \mathbb{R}^{n}$ and almost all $(s,\omega,i)\in [t_{0},+\infty)\times \Omega \times\mathbb{S}$. Next, let $K$ be such that
\begin{eqnarray}
	K<\kappa_{x}-\frac{\|C(\cdot)\|_{L_{\mathrm{F}}^{+\infty}\left(t, +\infty ; \mathbb{R}^{(n d) \times n}\right)}^{2}}{2}.
\end{eqnarray}
Hence, it can be asserted that equation (\ref{xx}) has a unique solution $ x(\cdot) $ for any given $ u(\cdot) $. We define the admissible control set as $\mathcal{U}^{K}[t_{0},+\infty)=\mathcal{U}_{1}[t_{0},+\infty)\times \mathcal{U}_{2}[t_{0},+\infty)=L_{\mathbb{F}}^{2, K}\left(0, +\infty ; \mathbb{R}^{m}\right)$. A control $u(\cdot)\in \mathcal{U}^{K}[t_{0},+\infty)$ is considered admissible, where $u(\cdot)=(u_{1}(\cdot)^{T},u_{2}(\cdot)^{T})^{T}$. The state process $x(\cdot)=x(\cdot;t_{0},x_{t_{0}},u(\cdot))$ is  the admissible state process correspinding to $u(\cdot)$, and the pair $ (u(\cdot),x(\cdot)) $ is referred to as an admissible pair.

\textbf{Problem (LQ).} Assume that (\ref{kax}) and (\ref{KK}) are valid. For any $(t_{0},x_{t_{0}})\in [0,+\infty)\times L_{\mathcal{F}_{t}}^{2}\left(\Omega ; \mathbb{R}^{n}\right)$, determine a control $u^{*}(\cdot)$ such that 
\begin{eqnarray}
	J^{K}\left(t_{0}, x_{t_{0}} ; u^{*}(\cdot)\right)=\inf _{u(\cdot) \in \mathcal{U}^{K}[t_{0}, +\infty)} J^{K}\left(t_{0}, x_{t_{0}} ; u(\cdot)\right).
\end{eqnarray}
\begin{theorem}
	Assume that (\ref{kax}), (\ref{KK}), and Assumption D are fulfilled. Let $(t_{0},x_{t_{0}})\in [0,+\infty)\times L_{\mathcal{F}_{t}}^{2}\left(\Omega ; \mathbb{R}^{n}\right)$ represent the initial data, and let $(u^{*}(\cdot),x^{*}(\cdot))$ be an admissible pair, where $u^{*}(\cdot)=(u_{1}^{*}(\cdot)^{T},u_{2}^{*}(\cdot)^{T})^{T}\in \mathcal{U}_{1}[t_{0},+\infty)\times \mathcal{U}_{2}[t_{0},+\infty)$. Assume the following BSDE
	\begin{eqnarray}
	d y^{*}(s)= && -\big[\left(2 K I+A(s,\alpha(s))^{T}\right) y^{*}(s)+C(s,\alpha(s))^{T} z^{*}(s)+Q(s,\alpha(s)) x^{*}(s)\nonumber\\
	&& +S(s,\alpha(s))^{T} u^{*}(s)\big] d s +\sum_{i=1}^{d} z_{i}^{*}(s) d W_{i}(s)+r^{*}(s)dB^{H}(s)+f^{*}(s)\cdot d\mathcal{M}(s) \label{ady}
	\end{eqnarray}
	admits a solution $(y^{*}(\cdot),z^{*}(\cdot),r^{*}(\cdot),f^{*}(\cdot))\in L_{\mathbb{F}}^{2, K}\left(0, +\infty ; \mathbb{R}^{n}\right)\times L_{\mathbb{F}}^{2, K}\left(0, +\infty ; \mathbb{R}^{nd}\right)\times L_{\mathbb{F}}^{2, K}\left(0, +\infty ; \mathbb{R}^{n}\right)\times L_{\mathbb{F}}^{2, K}\left(0, +\infty ; \mathbb{R}^{n}\right)$. Then the following statements are equivalent:
	
	(i) $u^{*}(\cdot)$ is an optimal control at $(t_{0},x_{t_{0}})$;
	
	(ii) $u^{*}(\cdot)$ satisfies
	\begin{eqnarray}
		B(\cdot)^{T} y^{*}(\cdot)+D(\cdot)^{T} z^{*}(\cdot)+S(\cdot)^{T}x^{*}(\cdot) +R(\cdot)u^{*}(\cdot)=0,\quad a.s..\label{u**}
	\end{eqnarray}
\end{theorem}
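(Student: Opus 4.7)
The strategy is a variational argument built on the generalized It\^o formula of Lemma 3.1 and the adjoint BSDE (\ref{ady}). The plan is to derive a duality identity linking any state perturbation $\delta x=x-x^{*}$ to the corresponding control perturbation $\delta u=u-u^{*}$, express the cost difference $J^{K}(u)-J^{K}(u^{*})$ in terms of $\delta u$ alone modulo a quadratic remainder, and then use Assumption D to close both directions of the equivalence.

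First, for any admissible $u(\cdot)$ with state $x(\cdot)$, set $\delta u=u-u^{*}$, $\delta x=x-x^{*}$. By linearity of (\ref{xx}), $\delta x$ solves a homogeneous SDE driven by $\delta u$ with $\delta x(t_{0})=0$ and \emph{no} $dB^{H}$ term (the coefficient $H(s)$ is control-independent and cancels). Next I would apply Lemma 3.1 to $e^{2Ks}\langle y^{*}(s),\delta x(s)\rangle$ on $[t_{0},T]$. The $2KI$ term built into the drift of (\ref{ady}) absorbs the derivative of the exponential weight; the $A^{T}y^{*}$ drift pairs off with $\langle y^{*},A\delta x\rangle$; and the Brownian cross-variation $\sum_{i}\langle z_{i}^{*},C_{i}\delta x\rangle$ cancels the $C^{T}z^{*}$ term in the BSDE drift. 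Taking expectation, the $dW_{i}$, $dB^{H}$ and $dM_{i_{0}j_{0}}$ integrals vanish (the Brownian and Markov martingale integrals are genuine martingales under the $L^{2,K}_{\mathbb{F}}$ bounds on $(y^{*},z^{*},f^{*})$ and $(\delta x,\delta u)$; the fBm integral has zero mean by the integrability imposed in Assumptions A and B). Letting $T\to+\infty$ and killing the boundary term $E[e^{2KT}\langle y^{*}(T),\delta x(T)\rangle]$ via Cauchy--Schwarz together with Lemmas 3.3 and 4.1 yields the duality identity
\begin{eqnarray*}
E\int_{t_{0}}^{+\infty}e^{2Ks}\langle Qx^{*}+S^{T}u^{*},\delta x\rangle ds=E\int_{t_{0}}^{+\infty}e^{2Ks}\langle B^{T}y^{*}+D^{T}z^{*},\delta u\rangle ds.
\end{eqnarray*}

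With this in hand I would expand the cost via the algebraic identity $\langle\Pi\xi,\xi\rangle-\langle\Pi\xi^{*},\xi^{*}\rangle=2\langle\Pi\xi^{*},\delta\xi\rangle+\langle\Pi\delta\xi,\delta\xi\rangle$ where $\xi=(x^{T},u_{1}^{T},u_{2}^{T})^{T}$, and substitute the duality identity into the linear part to obtain the key representation
\begin{eqnarray*}
J^{K}(u)-J^{K}(u^{*})=E\!\int_{t_{0}}^{+\infty}\!e^{2Ks}\langle B^{T}y^{*}+D^{T}z^{*}+Sx^{*}+Ru^{*},\delta u\rangle ds+\tfrac12 E\!\int_{t_{0}}^{+\infty}\!e^{2Ks}\langle\Pi\delta\xi,\delta\xi\rangle ds.
\end{eqnarray*}
For (ii)$\Rightarrow$(i), (\ref{u**}) kills the linear term, while Assumption D controls the quadratic term: perturbing $u_{1}$ with $u_{2}^{*}$ frozen gives non-negativity via the first block condition, and perturbing $u_{2}$ with $u_{1}^{*}$ frozen gives non-positivity via the second, so $u^{*}$ is a saddle point. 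For (i)$\Rightarrow$(ii), plug $u=u^{*}+\epsilon v$ for arbitrary $v\in\mathcal{U}^{K}[t_{0},+\infty)$, divide by $\epsilon$, and send $\epsilon\to 0$: the quadratic remainder is $O(\epsilon)$ and optimality forces the linear functional in $v$ to vanish, so by arbitrariness of $v$ the orthogonality condition (\ref{u**}) holds $ds\otimes dP$--a.s.

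The main obstacle will be the rigorous bookkeeping of the It\^o expansion in the infinite-horizon setting with three heterogeneous drivers coexisting. Specifically I must check that the cross-variation between the Brownian and fBm components contributes nothing extra beyond what Lemma 3.1 already accounts for, that the Markov martingale integrals $\int_{t_{0}}^{T}\langle f^{*},\delta x\rangle\cdot dM_{i_{0}j_{0}}$ are honest martingales so they vanish under expectation, and that the weighted boundary terms decay to zero through the combined use of Lemmas 3.3 and 4.1 and Cauchy--Schwarz. Once those three technical points are settled, the remainder is algebraic manipulation.
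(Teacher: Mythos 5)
Your proposal is correct and follows essentially the same route as the paper: a duality identity obtained by applying the generalized It\^o formula to $e^{2Ks}\langle y^{*}(s),\delta x(s)\rangle$, killing the boundary term at $T\to+\infty$ via Lemma 3.3, expanding the quadratic cost into a linear-plus-quadratic perturbation, and then using Assumption D for sufficiency and the two-sided limit $\varepsilon\to 0^{\pm}$ for necessity. The only cosmetic difference is that the paper works with one-player-at-a-time perturbations $x_{1}=(x^{\varepsilon}-x^{*})/\varepsilon$ from the outset rather than a general $\delta u$ that is afterwards specialized, which changes nothing of substance.
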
	
\begin{proof}
Let $(u^{*}(\cdot),x^{*}(\cdot))$ be an admissible pair, where $u^{*}(\cdot)=(u_{1}^{*}(\cdot)^{T},u_{2}^{*}(\cdot)^{T})^{T}\in \mathcal{U}_{1}[t_{0},+\infty)\times \mathcal{U}_{2}[t_{0},+\infty)$, $ \varepsilon \in \mathbb{R} $. By definition, for $\forall \, u_{1}(\cdot) \in \mathcal{U}_{1}[t_{0}, +\infty), \forall \, u_{2}(\cdot) \in \mathcal{U}_{2}[t_{0}, +\infty)$, $u^{*}(\cdot)$ serves as an optimal control if and only if the following conditions hold:
\begin{eqnarray}
	J^{K}\left(t_{0}, x_{t_{0}} ; u_{1}^{*}(\cdot), u_{2}^{*}(\cdot)\right) \le J^{K}\left(t_{0}, x_{t_{0}} ; u_{1}^{*}(\cdot)+\varepsilon u_{1}(\cdot), u_{2}^{*}(\cdot)\right) , \label{ju1}
\end{eqnarray}
\begin{eqnarray}
	J^{K}\left(t_{0}, x_{t_{0}} ; u_{1}^{*}(\cdot), u_{2}^{*}(\cdot)\right) \ge J^{K}\left(t_{0}, x_{t_{0}} ; u_{1}^{*}(\cdot), u_{2}^{*}(\cdot)+\varepsilon u_{2}(\cdot)\right).\label{ju2}
\end{eqnarray}

Let $\forall \, u_{1}(\cdot) \in \mathcal{U}_{1}[t_{0}, +\infty),\varepsilon \in \mathbb{R}$, $x^{\varepsilon}(\cdot)$ being the solution to the following equation on $[t_{0}, +\infty)$:
\begin{eqnarray}
	\left\{\begin{aligned} 
	&d x^{\varepsilon}(s)=\big\{A(s,\alpha(s)) x^{\varepsilon}(s)+B_{1}(s,\alpha(s))\big[u_{1}^{*}(s)+\varepsilon u_{1}(s)\big]+B_{2}(s,\alpha(s)) u_{2}^{*}(s)\big\} d s \\&
	+\sum_{i=1}^{d}\big\{C_{i}(s,\alpha(s)) x^{\varepsilon}(s)+D_{1i}(s,\alpha(s))\big[u_{1}^{*}(s)+\varepsilon u_{1}(s)\big]+D_{2i}(s,\alpha(s)) u_{2}^{*}(s)\big\} d W_{i}(s) \\&\quad \quad \quad \quad \quad \quad \quad \quad \quad \quad \quad \quad \quad \quad \quad \quad \quad \quad \quad \quad \quad \quad \quad \quad \quad \quad \quad \quad+H(s)dB^{H}(s),\\&
	x^{\varepsilon}(t_{0})=x_{t_{0}},\quad \alpha(t_{0})=i .
	\end{aligned}\right.\nonumber
\end{eqnarray}
As a result, $x_{1}(\cdot)=\frac{x^{\varepsilon}(\cdot)-x^{*}(\cdot)}{\varepsilon}$ solves the equation presented below:
\begin{eqnarray}
	\left\{\begin{aligned}
	&d x_{1}(s)= {\big[A(s,\alpha(s)) x_{1}(s)+B_{1}(s,\alpha(s)) u_{1}(s)\big] d s } \\&\quad \quad \quad  
	+\sum_{i=1}^{d}\left[C_{i}(s,\alpha(s)) x_{1}(s)+D_{1i}(s,\alpha(s)) u_{1}(s)\right] d W_{i}(s),\quad s \in[t_{0}, +\infty), \\&
	x_{1}(t_{0})=0,\quad\alpha(t_{0})=i .
	\end{aligned}\right.
\end{eqnarray}
For any $ T > t_{0} $, by applying It$\rm\hat{o}$'s formula to $ \left \langle e^{K\cdot}x_{1}(\cdot),e^{K\cdot}y^{*}(\cdot) \right \rangle$
on the finite
interval $ [t_{0}, T] $ and taking expectation, we arrive at the following expression (where $ s $ is abbreviated, except for the term 
$e^{2Ks}$).
\\$\begin{aligned}
&E\left\langle e^{K T} x_{1}(T), e^{K T} y^{*}(T)\right\rangle\\&=E \int_{t_{0}}^{T} e^{2 K s}\big[\left\langle B_{1}^{T}(\alpha(s)) y^{*}+D_{1}^{T}(\alpha(s)) z^{*}, u_{1}\right\rangle-\left\langle x_{1}, Q(\alpha(s)) x^{*}+S^{T}(\alpha(s)) u^{*}\right\rangle\big] d s.
\end{aligned}$
\\Following from Lemma 3.3, we conclude the following 
\begin{eqnarray}
	\label{tia}\\ \nonumber E \int_{t_{0}}^{+\infty} e^{2 K s}\big[\left\langle B_{1}^{T}(\alpha(s)) y^{*}+D_{1}^{T}(\alpha(s)) z^{*}, u_{1}\right\rangle-\left\langle x_{1}, Q(\alpha(s)) x^{*}+S^{T}(\alpha(s)) u^{*}\right\rangle\big] d s=0.
\end{eqnarray}

Subsequently, we evaluate the following
\\$$\begin{aligned}
&J^{K}(t_{0},x_{t_{0}},u_{1}^{*}(\cdot)+\varepsilon u_{1}(\cdot),u_{2}^{*}(\cdot))-J^{K}(t_{0},x_{t_{0}},u_{1}^{*}(\cdot),u_{2}^{*}(\cdot))\\
&=\frac{\varepsilon}{2} E \int_{t_{0}}^{+\infty} e^{2 K s}\left\langle\Pi(s,\alpha(s))\left(\begin{array}{l}
2x^{*}+\varepsilon x^{*} \\
2u_{1}^{*}+\varepsilon u_{1}^{*} \\ 
2u_{2}^{*}
\end{array}\right),\left(\begin{array}{l}
x \\
u_{1} \\
0
\end{array}\right)\right\rangle d s\\
&=\varepsilon E\int_{t_{0}}^{+\infty} e^{2 K s}\big[\left \langle Q(\alpha(s)x^{*})+S^{T}(\alpha(s))u^{*},x_{1}  \right \rangle \\&\quad + \left \langle S_{1}(\alpha(s))x^{*} +R_{11}(\alpha(s))u_{1}^{*}+
R_{12}(\alpha(s))u_{2}^{*},u_{1}\right \rangle \big]ds\\&\quad+\frac{\varepsilon ^{2}}{2}E \int_{t_{0}}^{+\infty}\big[\left \langle Q(\alpha(s))x_{1},x_{1} \right \rangle+2\left \langle S_{1}(\alpha(s))x_{1},u_{1} \right \rangle +\left \langle R_{11}(\alpha(s))u_{1},u_{1} \right \rangle  \big]ds,
\end{aligned},$$
\\where $\Pi(s,\alpha(s))=\left(\begin{array}{lll}
Q(s,\alpha(s)) & S_{1}(s,\alpha(s))^{T} & S_{2}(s,\alpha(s))^{T} \\
S_{1}(s,\alpha(s)) & R_{11}(s,\alpha(s)) & R_{12}(s,\alpha(s)) \\
S_{2}(s,\alpha(s)) & R_{21}(s,\alpha(s)) & R_{22}(s,\alpha(s))
\end{array}\right)$.
\\The combination of the above equation and (\ref{tia}) leads to 
\begin{eqnarray}
\begin{aligned}
&J^{K}(t_{0},x_{t_{0}},u_{1}^{*}(\cdot)+\varepsilon u_{1}(\cdot),u_{2}^{*}(\cdot))-J^{K}(t_{0},x_{t_{0}},u_{1}^{*}(\cdot),u_{2}^{*}(\cdot))\\
&=\varepsilon E\int_{t_{0}}^{+\infty} e^{2 K s}\big[\left\langle B_{1}^{T}(\alpha(s)) y^{*}+D_{1}^{T}(\alpha(s)) z^{*}+S_{1}(\alpha(s))x^{*},u_{1}\right\rangle \\&\quad+\left\langle R_{11}(\alpha(s))u_{1}^{*}+R_{12}(\alpha(s))u_{2}^{*},u_{1}\right\rangle\big]ds\\
&\quad+\frac{\varepsilon ^{2}}{2}E \int_{t_{0}}^{+\infty}\big[\left \langle Q(\alpha(s))x_{1},x_{1} \right \rangle+2\left \langle S_{1}(\alpha(s))x_{1},u_{1} \right \rangle +\left \langle R_{11}(\alpha(s))u_{1},u_{1} \right \rangle  \big]ds.
\end{aligned}\label{j11}
\end{eqnarray}

Similarly, for $\forall \, u_{2}(\cdot) \in \mathcal{U}_{2}[t_{0}, +\infty),\varepsilon \in \mathbb{R}$,
\begin{eqnarray}
	\begin{aligned}
	&J^{K}(t_{0},x_{t_{0}},u_{1}^{*}(\cdot),u_{2}^{*}(\cdot)+\varepsilon u_{2}(\cdot))-J^{K}(t_{0},x_{t_{0}},u_{1}^{*}(\cdot),u_{2}^{*}(\cdot))\\
	&=\varepsilon E\int_{t_{0}}^{+\infty} e^{2 K s}\big[\left\langle B_{2}^{T}(\alpha(s)) y^{*}+D_{2}^{T}(\alpha(s)) z^{*}+S_{2}(\alpha(s))x^{*},u_{2}\right\rangle \\&\quad+\left\langle R_{22}(\alpha(s))u_{2}^{*}+R_{21}(\alpha(s))u_{1}^{*},u_{2}\right\rangle\big]ds\\
	&\quad+\frac{\varepsilon ^{2}}{2}E \int_{t_{0}}^{+\infty}\big[\left \langle Q(\alpha(s))x_{2},x_{2} \right \rangle+2\left \langle S_{2}(\alpha(s))x_{2},u_{2} \right \rangle +\left \langle R_{22}(\alpha(s))u_{2},u_{2} \right \rangle  \big]ds.
	\end{aligned}\label{j22}
\end{eqnarray}
If (\ref{u**}) is valid and $\varepsilon \in \mathbb{R}$, Assumption D implies that
\\$J^{K}\left(t_{0}, x_{t_{0}} ; u_{1}^{*}(\cdot), u_{2}^{*}(\cdot)\right) \le J^{K}\left(t_{0}, x_{t_{0}} ; u_{1}^{*}(\cdot)+\varepsilon u_{1}(\cdot), u_{2}^{*}(\cdot)\right) \quad \forall \, u_{1}(\cdot) \in \mathcal{U}_{1}[t_{0}, +\infty), $
\\$J^{K}\left(t_{0}, x_{t_{0}} ; u_{1}^{*}(\cdot), u_{2}^{*}(\cdot)\right) \ge J^{K}\left(t_{0}, x_{t_{0}} ; u_{1}^{*}(\cdot), u_{2}^{*}(\cdot)+\varepsilon u_{2}(\cdot)\right) \quad \forall \, u_{2}(\cdot) \in \mathcal{U}_{2}[t_{0}, +\infty).$
\\This means that $u^{*}(\cdot)$ is optimal, and sufficiency has been demonstrated.

For the necessity, let $u^{*}(\cdot)$ be an optimal control. Then for $\forall \, u_{1}(\cdot) \in \mathcal{U}_{1}[t_{0}, +\infty),  \varepsilon \in \mathbb{R}$, (\ref{ju1})  is true.  Accordingly, (\ref{j11}) results in
\begin{eqnarray}
\begin{aligned}
&\varepsilon E\int_{t_{0}}^{+\infty} e^{2 K s}\big[\big\langle B_{1}^{T}(\alpha(s)) y^{*}+D_{1}^{T}(\alpha(s)) z^{*}+S_{1}(\alpha(s))x^{*} \\&\quad \quad+R_{11}(\alpha(s))u_{1}^{*}+R_{12}(\alpha(s))u_{2}^{*},u_{1}\big\rangle\big]ds
+\frac{\varepsilon ^{2}}{2}E \int_{t_{0}}^{+\infty}\big[\left \langle Q(\alpha(s))x_{1},x_{1} \right \rangle\\&\quad \quad+2\left \langle S_{1}(\alpha(s))x_{1},u_{1} \right \rangle  +\left \langle R_{11}(\alpha(s))u_{1},u_{1} \right \rangle  \big]ds\ge 0.
\end{aligned}
\label{j111}
\end{eqnarray}
When $\varepsilon>0$, (\ref{j111}) implies
\begin{eqnarray}
\begin{aligned}
& E\int_{t_{0}}^{+\infty} e^{2 K s}\big[\big\langle B_{1}^{T}(\alpha(s)) y^{*}+D_{1}^{T}(\alpha(s)) z^{*}+S_{1}(\alpha(s))x^{*} \\&\quad \quad+R_{11}(\alpha(s))u_{1}^{*}+R_{12}(\alpha(s))u_{2}^{*},u_{1}\big\rangle\big]ds+\frac{\varepsilon}{2}E \int_{t_{0}}^{+\infty}\big[\left \langle Q(\alpha(s))x_{1},x_{1} \right \rangle\\&\quad \quad+2\left \langle S_{1}(\alpha(s))x_{1},u_{1} \right \rangle +\left \langle R_{11}(\alpha(s))u_{1},u_{1} \right \rangle  \big]ds\ge 0.
\end{aligned}
\nonumber
\end{eqnarray}
Taking the limit as $\varepsilon \to 0^{+}$, it shows 
\begin{eqnarray}
\begin{aligned}
&E\int_{t_{0}}^{+\infty} e^{2 K s}\big[\big\langle B_{1}^{T}(\alpha(s)) y^{*}+D_{1}^{T}(\alpha(s)) z^{*}+S_{1}(\alpha(s))x^{*} \\&\quad \quad \quad\quad \quad+R_{11}(\alpha(s))u_{1}^{*}+R_{12}(\alpha(s))u_{2}^{*},u_{1}\big \rangle\big]ds\ge 0.
\end{aligned}\label{su1}
\end{eqnarray}
If $\varepsilon<0$ and $\varepsilon \to 0^{-}$, this leads to 
\begin{eqnarray}
\begin{aligned}
&E\int_{t_{0}}^{+\infty} e^{2 K s}\big[\big\langle B_{1}^{T}(\alpha(s)) y^{*}+D_{1}^{T}(\alpha(s)) z^{*}+S_{1}(\alpha(s))x^{*} \\&\quad \quad \quad\quad \quad+R_{11}(\alpha(s))u_{1}^{*}+R_{12}(\alpha(s))u_{2}^{*},u_{1}\big\rangle\big]ds\le 0.
\end{aligned} \label{su2}
\end{eqnarray}
With the help of (\ref{su1}) and (\ref{su2}), it is evident that 
\begin{eqnarray}
\begin{aligned}
&E\int_{t_{0}}^{+\infty} e^{2 K s}\big[\big\langle B_{1}^{T}(\alpha(s)) y^{*}+D_{1}^{T}(\alpha(s)) z^{*}+S_{1}(\alpha(s))x^{*} \\&\quad \quad \quad\quad \quad+R_{11}(\alpha(s))u_{1}^{*}+R_{12}(\alpha(s))u_{2}^{*},u_{1}\big \langle\big]ds=0.
\end{aligned}\label{3} 
\end{eqnarray}

Analogously, for $\forall \, u_{2}(\cdot) \in \mathcal{U}_{2}[t_{0}, +\infty),  \varepsilon \in \mathbb{R}$, we have 
\begin{eqnarray}
\begin{aligned}
&E\int_{t_{0}}^{+\infty} e^{2 K s}\big[\big\langle B_{2}^{T}(\alpha(s)) y^{*}+D_{2}^{T}(\alpha(s)) z^{*}+S_{2}(\alpha(s))x^{*} \\&\quad \quad \quad\quad \quad+R_{22}(\alpha(s))u_{2}^{*}+R_{21}(\alpha(s))u_{1}^{*},u_{2}\big \rangle\big]ds=0.
\end{aligned}\label{4}
\end{eqnarray}
In view of  the arbitrariness of $ u_{1}(\cdot ) $ and $ u_{2}(\cdot ) $, the (\ref{3}) and (\ref{4}) imply (\ref{u**}), with the necessity being proven.
\end{proof}

\textit{Remark}: Since this paper focuses on the two-person zero-sum optimal control problem, Assumption D must be applied to the coefficients before different controls are introduced. In most prior studies, the cost function in (\ref{jj}) assumes $ K=0 $; however, we consider the more general case. To address the challenges arising during the derivation, the adjoint equation (\ref{ady}) has been modified accordingly, distinguishing it from the equations in previous works.

In accordance with Assumption D, $u^{*}$ is represented as
\begin{eqnarray}
	u^{*}(\cdot)=-R(\cdot)^{-1}\big[B(\cdot)^{T} y^{*}(\cdot)+D(\cdot)^{T} z^{*}(\cdot)+S(\cdot)^{T}x^{*}(\cdot)\big].
\end{eqnarray}
Upon substituting $u^{*}$ into (\ref{xx}) and (\ref{ady}), the following coupled equations arise:
\begin{eqnarray}
	\left\{\begin{aligned}
	d x^{*}(s)= & \big[\left(A(s,\alpha(s))-B(s,\alpha(s)) R^{-1}(s,\alpha(s)) S(s,\alpha(s))\right) x^{*}(s)\\&-B(s,\alpha(s)) R^{-1}(s,\alpha(s)) B^{T}(s,\alpha(s)) y^{*}(s)\\&-B(s,\alpha(s)) R^{-1}(s,\alpha(s)) D^{T}(s,\alpha(s)) z^{*}(s)\big] d s+H(s)dB^{H}(s) \\
	& +\sum_{i=1}^{d}\big[\left(C_{i}(s,\alpha(s))-D_{i}(s,\alpha(s)) R^{-1}(s,\alpha(s)) S(s,\alpha(s))\right) x^{*}(s)\\&-D_{i}(s,\alpha(s)) R^{-1}(s,\alpha(s)) B^{T}(s,\alpha(s)) y^{*}\\&-D_{i}(s,\alpha(s)) R^{-1}(s,\alpha(s)) D^{T}(s,\alpha(s)) z^{*}(s)\big] d W_{i}(s),  \\
	d y^{*}(s)=& -\big[(2 K I+A(s,\alpha(s))\\&-B(s,\alpha(s)) R^{-1}(s,\alpha(s)) S(s,\alpha(s)))^{T} y^{*}(s)\\&+\left(C(s,\alpha(s))-D(s,\alpha(s)) R^{-1}(s,\alpha(s)) S(s,\alpha(s))\right)^{T} z^{*}(s)\\&+\left(Q(s,\alpha(s))-S^{T}(s,\alpha(s)) R^{-1}(s,\alpha(s)) S(s,\alpha(s))\right) x^{*}(s)\big] d s \\
	& +\sum_{i=1}^{d} z_{i}^{*}(s) d W_{i}(s)+r^{*}(s)dB^{H}(s)+f^{*}(s)\cdot d\mathcal{M}(S),   \\
	x^{*}(t_{0})=  &x_{t_{0}},\quad \alpha(t_{0})=i .
	\end{aligned}\right.\label{onlyx}
\end{eqnarray}
\textbf{Example:} Let $n=d=1$. Consider the following equation:
\begin{eqnarray}
	\begin{cases}
	dx(s)=\left[u_{1}^{*}(s)+u_{2}^{*}(s) \right]ds+\left[u_{1}^{*}(s)+u_{2}^{*}(s) \right]dW(s)+dB^{H}(s) ,\\
	x(0)=x,
	\end{cases}\label{examplex}
\end{eqnarray}
with the cost function expressed as
$$J(0,x;u_{1},u_{2})=\frac{1}{2}E\int_{0}^{+\infty}\left[u_{1}^{2}(s)-u_{2}^{2}(s)\right]ds.$$
\\As suggested by equation (\ref{examplex}), we present the following BSDE 
\begin{eqnarray}
	dy(s)=z(s^{*})dW(s)+dB^{H}(s)+d\mathcal{M}(s).\label{exampley}
\end{eqnarray}
 Then, the equation (\ref{u**}) is re-expressed in the following manner:
\begin{equation}
	\begin{cases}
	  y^{*}(s) +z^{*}(s)+u_{1}^{*}(s)=0,\quad a.s., \\
	  y^{*}(s) +z^{*}(s)-u_{2}^{*}(s)=0,\quad a.s.,
	\end{cases}\label{examplecon}
\end{equation}
where $s\in [0,+\infty)$. According to the proof of Theorem 6.1, if equations (\ref{examplex}) and (\ref{exampley}) satisfy condition (\ref{examplecon}), then conditions (\ref{ju1}) and (\ref{ju2}) hold true based on (\ref{j11}) and (\ref{j22}). Conversely, if conditions (\ref{ju1}) and (\ref{ju2}) hold, it follows that the equation satisfies condition (\ref{examplecon}). Therefore, $u^{*}(\cdot)=(u_{1}^{*}(\cdot)^{T},u_{2}^{*}(\cdot)^{T})^{T}$ is an optimal control.

From the cost function (\ref{jj}), it is clear that 
$ x $ and $ u $ are associated with a coefficient 
$S(\cdot)$, which governs the cross-term. In the context of this paper, the presence of 
$S(\cdot)$ might influence the derivation procedure. Thus, we examine the effect of 
$S(\cdot)$ on the stochastic differential equation in two cases: when $S(\cdot)$ is zero and when it is non-zero.
\subsection{The coefficient of the cross term $ S (\cdot ) = 0 $}
When $ S (\cdot ) = 0 $, the FBSDE (\ref{onlyx}) takes the form
\begin{eqnarray}
\left\{\begin{aligned}
d x^{*}&(s)=  \big[A(s,\alpha(s)) x^{*}-B(s,\alpha(s)) R^{-1}(s,\alpha(s)) B^{T}(s,\alpha(s)) y^{*}(s)\\&-B(s,\alpha(s)) R^{-1}(s,\alpha(s)) D^{T}(s,\alpha(s)) z^{*}(s)\big] d s +H(s)dB^{H}(s) \\
& +\sum_{i=1}^{d}\big[C_{i}(s,\alpha(s))x^{*}(s)-D_{i}(s,\alpha(s)) R^{-1}(s,\alpha(s)) B^{T}(s,\alpha(s)) y^{*}(s)\\&-D_{i}(s,\alpha(s)) R^{-1}(s,\alpha(s)) D^{T}(s,\alpha(s)) z^{*}(s)\big] d W_{i}(s), \quad s \in[t_{0}, +\infty), \\
d y^{*}&(s)=  -\big[(2 K I+A(s,\alpha(s)))^{T} y^{*}(s)+ C^{T}(s,\alpha(s)) z^{*}(s)\\&+Q(s,\alpha(s)) x^{*}(s)\big] d s  +\sum_{i=1}^{d} z_{i}^{*}(s) d W_{i}(s)\\
&+r^{*}(s)dB^{H}(s)+f^{*}(s)\cdot d\mathcal{M}(s), \quad s \in[t_{0}, +\infty), \\
x^{*}&(t_{0})=  x_{t_{0}},\quad\alpha(t_{0})=i .
\end{aligned}\right.\label{s0}
\end{eqnarray}
With $ S (\cdot ) = 0  $, which is a special case of equation (\ref{onlyx}), the assumptions remain unchanged. Therefore, equation (\ref{s0}) also admits a unique solution by applying the same technique.
\subsection{The coefficient of the cross term $ S (\cdot ) \ne 0 $}
As explained in \cite{huang2014solvability}, a linear transformation enables the reduction of the general case 
$ S(\cdot) \ne 0 $ to the special case $ S(\cdot) = 0 $. Let
\begin{eqnarray}
	\widetilde{u}(\cdot)=u(\cdot)+R(\cdot)^{-1} S(\cdot) x(\cdot).
\end{eqnarray}
Hence, equations (\ref{xx}) and (\ref{jj}) take the following form
\begin{eqnarray}
	\left\{\begin{aligned}
	&d x(s)=\left[\widetilde{A}(s,\alpha(s)) x(s)+B(s,\alpha(s)) \widetilde{u}(s)\right] d s+H(s)dB^{H}(s)\\&\quad  \quad \quad+\sum_{i=1}^{d}\left[\widetilde{C}_{i}(s,\alpha(s)) x(s)+D_{i}(s,\alpha(s)) \widetilde{u}(s)\right] d W_{i}(s),\quad s \in[t_{0}, +\infty) \\
	&x(t_{0})=x_{t_{0}},\quad \alpha(t_{0})=i,
	\end{aligned}\right.\label{newx}
\end{eqnarray}
and
\begin{eqnarray}
	\begin{aligned}
	&\widetilde{J}^{K}\left(t_{0}, x_{t_{0}} ; \widetilde{u}(\cdot)\right):=J^{K}\left(t_{0}, x_{t_{0}} ; u(\cdot)\right) \\&
	=\frac{1}{2} E \int_{t_{0}}^{+\infty} e^{2 K s}\left[\langle\widetilde{Q}(s,\alpha(s)) x(s), x(s)\rangle+\langle R(s,\alpha(s)) \widetilde{u}(s), \widetilde{u}(s)\rangle\right] \mathrm{d} s,
	\end{aligned}\label{newJ}
\end{eqnarray}
where $\widetilde{A}(\cdot)  :=A(\cdot)-B(\cdot) R(\cdot)^{-1} S(\cdot),  \widetilde{C}(\cdot):=C(\cdot)-D(\cdot) R(\cdot)^{-1} S(\cdot) ,
\widetilde{Q}(\cdot) :=Q(\cdot)-S(\cdot)^{T} R(\cdot)^{-1} S(\cdot)$.

The cross term disappears upon replacing (\ref{xx}) and (\ref{jj}) with (\ref{newx}) and (\ref{newJ}), enabling a simplification to a special case for further examination.

\section{Conclusion}
This paper investigates a class of two-person zero-sum stochastic differential equations influenced by Markov chains and fractional Brownian motion over an infinite time horizon. These models are especially relevant in contexts where systems are influenced by both random jumps, modeled by Markov chains, and long-range dependencies captured by fractional Brownian motion. We introduce a novel It$\rm\hat{o}$'s formula for the forward-backward stochastic differential equations governing the system's dynamics. By leveraging this formula, we transform the system's evolution into a form amenable to precise analysis and solution. The existence and uniqueness of solutions to these equations are rigorously established. We then examine the optimal control problem in the context of the two-person zero-sum game, deriving explicit optimal control strategies for both players. Additionally, we explore how the dynamics of the system are influenced by the presence or absence of the cross term 
$ S(\cdot) $. Our results contribute to the theory of stochastic control and differential games by offering a comprehensive framework that combines Markov and fBm components. These findings have potential applications in fields like financial engineering, dynamic pricing, and optimal resource management, where hybrid models provide a more accurate representation of real-world systems. Future work will extend the model by incorporating time-delay effects to investigate their impact on the system’s behavior.

%\appendix
%\section{Summary of major changes}
\bibliography{bibtex}

\end{document}